\documentclass[12pt]{amsart}

\usepackage[margin=1in]{geometry}
\usepackage{setspace}
\onehalfspacing
\usepackage{amsmath}
\usepackage{amsxtra}
\usepackage{amscd}
\usepackage{amsthm}
\usepackage{amssymb}
\usepackage[normalem]{ulem}
\usepackage{comment}
\usepackage[dvipsnames]{xcolor}
\usepackage{tikz}
\usetikzlibrary{matrix,arrows,decorations.pathmorphing,automata, matrix, positioning, calc, shapes.multipart}
\usepackage{graphicx}
\usepackage{hyperref}
\usepackage[shortlabels]{enumitem}
\usepackage{float}
\usepackage{ytableau}

\definecolor{LightGray}{gray}{0.8}

\setcounter{tocdepth}{1}

\numberwithin{equation}{section}

\theoremstyle{definition}
\newtheorem{theorem}[equation]{Theorem}
\newtheorem{lemma}[equation]{Lemma}
\newtheorem{proposition}[equation]{Proposition}
\newtheorem{corollary}[equation]{Corollary}

\newtheorem{example}[equation]{Example}

\newtheorem{conjecture}[equation]{Conjecture}
\newtheorem{convention}[equation]{Convention}
\newtheorem{remark}[equation]{Remark}

\newcommand\TikZ[1]{\begin{matrix}\begin{tikzpicture}#1\end{tikzpicture}\end{matrix}}

\newcommand{\hstar}{\mathfrak{h}^*}

\newcommand{\emp}{\emptyset}

\newcommand{\sle}{\widehat{\mathfrak{sl}}_e}

\newcommand{\slinf}{\mathfrak{sl}_\infty}

\newcommand{\triv}{\mathsf{triv}}

\newcommand{\ct}{\mathrm{ct}}
\newcommand{\cct}{\widetilde{\ct}}

\newcommand{\ul}{\underline}
\newcommand{\cA}{\mathcal{A}}
\newcommand{\cF}{\mathcal{F}}

\newcommand{\cO}{\mathcal{O}}
\newcommand{\cP}{\mathcal{P}}
\newcommand{\cU}{\mathcal{U}}

\newcommand{\EE}{\mathsf{E}}
\newcommand{\FF}{\mathsf{F}}

\newcommand{\Res}{\operatorname{Res}}

\newcommand{\Ores}{{^\cO\Res}}

\newcommand{\la}{\lambda}

\usepackage[foot]{amsaddr}

\title[Unitary RCA representations and crystals in type B]{Unitary representations of type B rational Cherednik algebras and crystal combinatorics}
\author{Emily Norton}
\address{Fachbereich Mathematik, TU Kaiserslautern, Gottfried-Daimler-Str. 48, 67663 Kaiserslautern, Germany.}
\email{norton@mathematik.uni-kl.de}

\begin{document}

\begin{abstract}
We compare crystal combinatorics of the level $2$ Fock space with the classification of unitary irreducible representations of type $B$ rational Cherednik algebras to study how unitarity behaves under parabolic restriction. First, we show that any finite-dimensional unitary irreducible representation of such an algebra is labeled by a bipartition consisting of a rectangular partition in one component and the empty partition in the other component. This is a new proof of a result that can be deduced from theorems of Montarani and Etingof-Stoica. 
Second, we show that the crystal operators that remove boxes preserve the combinatorial conditions for unitarity, and that the parabolic restriction functors categorifying the crystals send irreducible unitary representations to unitary representations. 
Third, we find the supports of the unitary representations, showing that all possible supports allowed by the first and second results do in fact occur.
\end{abstract}

\maketitle

\section{Motivation and summary of results}  
\subsection{Motivation} The unitary representations in the category $\cO_c(W)$ of a rational Cherednik algebra associated to a complex reflection group $W$ were first introduced and studied by Etingof and Stoica, who classified the unitary irreducible representations for rational Cherednik algebras of symmetric groups $S_n$ \cite{EtingofStoica}. This classification was extended by Griffeth to the wreath product groups $W=G(\ell,1,n)$, which include $S_n$ as $G(1,1,n)$ and the Weyl group $B_n$ as $G(2,1,n)$ \cite{Griffeth}. The classification for $G(\ell,1,n)$ is inductive and complicated, but for $B_n$ there exist closed conditions for unitarity in terms of contents of certain boxes in bipartitions thanks to \cite[Corollaries 8.4 and 8.5]{Griffeth}. The goal of this paper is to study how these conditions interact with the crystal combinatorics of level $2$ Fock spaces, which controls the cuspidal data of the irreducible representations in $\cO_c(B_n)$ for appropriate parameters $c$.

Unitary representations stand out from the rest of the representations in $\cO_c(W)$ by their relative tractability. In many cases they are known to possess closed-form, multiplicity-free character formulas in terms of a saturated subset of the poset of lowest weights \cite{Ruff}, \cite{BNS}, \cite{FGM}. 
These formulas are known in many cases to admit a combinatorial interpretation via affine type $A$ alcove geometry \cite{BNS}.
It is often possible to write explicit bases for the unitary representations or for the corresponding representations of closely related algebras such as the Morita equivalent diagrammatic Cherednik algebra or the Hecke algebra.\footnote{This has been done in special cases in type $G(\ell,1,n)$ using a variety of methods and perspectives. When a standard module in $\cO_c(G(\ell,1,n))$ is diagonalizable with respect to a certain commutative subalgebra, \cite{GriffethCali} completely describes the submodule structure of that standard module. This implies a basis of its irreducible quotient. Some unitary representations fall under this umbrella, but most do not. Similarly, using these methods in type $A$ the recent paper of \cite{GSV} shows how these bases arise from parabolic Hilbert schemes. In \cite{BNS}, we construct bases of the unitary modules of Hecke and diagrammatic Cherednik algebras of types $A$ and, for certain parameters, $G(\ell,1,n)$.} 

At the categorical level, the unitary irreducible representations in type $A$, and for certain parameters, type $G(\ell,1,n)$,
have been proven to possess resolutions by complexes of direct sums of standard modules \cite{BEG}, \cite{BNS}, \cite{BHN}. 
Such resolutions, known in the literature as BGG resolutions, categorify character formulas via the Euler characteristic of the complex.\footnote{The BGG resolution of $L(\triv)\in\cO_{\frac{1}{h}}(W)$ for $W$ a real reflection group, where $\triv $ is the trivial representation of $W$ and $h$ is the Coxeter number of $W$, was constructed in \cite{BEG}. The BGG resolutions of all unitary irreducible representations in type $A$ were conjectured in \cite{BZGS} and this conjecture was proved in \cite{BNS}, where resolutions of unitary irreducible representations were also constructed in type $G(\ell,1,n)$ for specific types of parameters, providing the first known character formulas of those unitary modules.} Moreover, one might consider not just a unitary representation by itself, but the whole Serre subcategory or subquotient category determined by the saturated subset of the poset of lowest weights appearing in its character. In type $A$, this subcategory of $\cO_{\frac{1}{e}}(S_n)$ is equivalent to a truncation of an Elias-Williamson category of parabolic Soergel bimodules of type $A_{e'}\subset \widetilde{A}_{e'}$ for some $e'\leq e$ \cite{BCH}.
  
In type $A$, the BGG resolutions of unitary modules are invariant under ``runner removal Morita equivalences" \cite{ChuangMiyachi}, equivalently, under dilation of $e$-alcoves in the affine type $A$ alcove geometry determining the maps in the complex \cite{BowmanCox}. Consequently, the BGG resolutions and hence the character formulas of all unitary irreducible modules for type $A$ Cherednik algebras at all parameters are actually classified by a subset of them, the unitary modules $L_{\frac{1}{e}}(e^m)$ labeled by rectangles with $e$ columns, where $c=\frac{1}{e}$ is the parameter for the Cherednik algebra $H_c(S_n)$ and $n=em$ for some $m\geq 1$ \cite{BNS}. All other characters and resolutions of unitary irreducible representations in type $A$ are simply obtained by relabeling partitions via an easy combinatorial procedure. The rectangles $(e^m)$ have an interesting interpretation in terms of the combinatorics of certain simply directed graphs called crystals whose vertices are partitions.  

This hints that it may be interesting to study the interaction of the combinatorial conditions for unitarity with crystal combinatorics on higher level Fock spaces. 
 In this paper, we carry out this comparison for the rational Cherednik algebra of $B_n$ and the level $2$ Fock space.  Perhaps the patterns revealed in this study may lead to a more conceptual description of the unitary irreducible representations in $\cO_c(G(\ell,1,n))$ at parameters $c$ such that $\bigoplus_n\cO_c(G(\ell,1,n)$ categorifies a level $\ell$ Fock space. We also hope it will help to suggest the different types of alcove geometries we need to consider to give an attractive solution to the problem of constructing BGG resolutions of unitary representations in $\cO_c(B_n)$, and ultimately in $\cO_c(G(\ell,1,n))$.

\subsection{The finite-dimensional unitary representations}\label{First theorem} Finite-dimensional representations play the role of cuspidals in the representation theory of rational Cherednik algebras. 
Our first result answers the following question about representations in the category $\cO$ of rational Cherednik algebras associated to the Weyl group $B_n$: which unitary representations of these algebras are also finite-dimensional? To give a precise combinatorial answer in terms of the labeling of irreducible representations by bipartitions of $n$, we use  a criterion by Etingof and Stoica in terms of the $c$-function \cite{EtingofStoica}, the classification of unitary representations by Griffeth \cite{Griffeth}, and the combinatorial description of finite-dimensional representations in terms of source vertices of crystals on a level $2$ Fock space due to Shan, Vasserot, Losev, Gerber, and the author \cite{Shan},\cite{ShanVasserot},\cite{Losev},\cite{Gerber1},\cite{Gerber2},\cite{GerberN}. Let $(e,\ul{s})$ be a parameter for a level $2$ Fock space, so $e\in\mathbb{Z}_{\geq 2}$, and $\ul{s}=(s_1,s_2)\in\mathbb{Z}^2$. To the parameter $(e,\ul{s})$ we can associate a rational Cherednik algebra $H_{e,\ul{s}}(B_n)$ and its category $\cO$ of representations, denoted $\cO_{e,\ul{s}}(n)$. 

\begin{theorem}\label{unitary+fd} Let $\ul{\lambda}$ be a bipartition and let $(e,\ul{s})\in \mathbb{Z}_{\geq 2}\times \mathbb{Z}^2$ be a parameter for the Fock space. The irreducible representation $L(\ul{\lambda})\in\cO_{e,\ul{s}}(n)$ is both unitary and finite-dimensional if and only if (i) $\ul{\lambda}=(\lambda,\emptyset)$ or $\ul{\lambda}=(\emptyset,\lambda)$ for some rectangle $\lambda$, and (ii) the parameter $(e,\ul{s})$ satisfies the following conditions depending on $\ul{\lambda}$, where $r$ is the number of rows and $q$ the number of columns of $\lambda$:
\begin{enumerate} 
\item if $\ul{\lambda}=(\lambda,\emptyset)$ then $r-q=s-e$;
\item if $\ul{\lambda}=(\emptyset,\lambda)$ then $r-q=-s$.
\end{enumerate}
\end{theorem}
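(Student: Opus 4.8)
The plan is to establish the two properties by independent combinatorial criteria and then intersect them. Finite-dimensionality is detected by the crystal characterization recalled above: $L(\ul{\la})$ is finite-dimensional exactly when $\ul{\la}$ is a source (highest-weight) vertex of the crystals on the level $2$ Fock space, i.e.\ when it is annihilated by every box-removing operator of the $\sle$-crystal and of the $\slinf$-crystal. Unitarity is controlled by Griffeth's closed content conditions \cite[Corollaries 8.4 and 8.5]{Griffeth}, and the Etingof--Stoica $c$-function criterion \cite{EtingofStoica} provides the bridge that turns qualitative shape information into the exact numerical relation between $r-q$ and the charges. The theorem is then the assertion that the set of unitary bipartitions meets the set of source bipartitions precisely in the family $(\la,\emptyset)$, $(\emptyset,\la)$ with $\la$ rectangular and the stated charge relation.

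For the forward implication, suppose $L(\ul{\la})$ is both unitary and finite-dimensional. First I would substitute $\ul{\la}$ into Griffeth's inequalities to bound the contents occurring in each component, which already restricts each component to a near-rectangular shape. Next I impose the source condition. The decisive step, which I expect to be the heart of the argument, is to show that a nonempty bipartition cannot be a source unless one component is empty: otherwise the outer removable corner of the lesser component carries a residue on which some box-removing crystal operator acts nontrivially, contradicting sourcehood. Once one component is empty, requiring all removable corners of the surviving component $\la$ to be killed by the crystal, together with the content bound from unitarity, forces $\la$ to be an exact rectangle $(q^r)$. Computing the content of its unique removable corner and applying the Etingof--Stoica $c$-function criterion then yields $r-q=s-e$ if the first component is nonempty and $r-q=-s$ if the second is, the sign recording which charge governs the surviving component.

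For the reverse implication, I begin with $\ul{\la}=(\la,\emptyset)$ or $(\emptyset,\la)$, $\la=(q^r)$ a rectangle satisfying the charge relation, and verify both properties directly. Unitarity follows by checking Griffeth's content inequalities on the rectangle: the relation $r-q=s-e$ (respectively $-s$) is exactly what places the relevant content differences in the unitary range. Finite-dimensionality follows by verifying that $\ul{\la}$ is a crystal source, the charge relation aligning the residue of the single removable corner so that every box-removing operator of both crystals annihilates $\ul{\la}$.

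The principal obstacle I anticipate is the content and charge bookkeeping needed to show that Griffeth's unitarity inequalities and the crystal-source condition cut out one and the same numerical relation; this amounts to reconciling the single parameter $s$ attached to the Cherednik algebra with the pair $(s_1,s_2)$ and the integer $e$ governing the Fock-space crystal, and carefully tracking residues modulo $e$. The conceptual crux inside this is the step that finite-dimensionality forces one component to vanish, which I would prove by contradiction by applying crystal operators to the outer corner of the smaller component.
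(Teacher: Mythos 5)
Your proposal founders on the step you yourself identify as the conceptual crux: the claim that a bipartition with both components nonempty cannot be a source vertex of both crystals, ``because the outer removable corner of the lesser component carries a residue on which some box-removing crystal operator acts nontrivially.'' This is false, and the proposed mechanism misreads how the crystal operators work. In the $\sle$-crystal, a removable box does not guarantee that $\tilde{e}_i$ acts nontrivially: the good-box rule cancels removable boxes against addable boxes (this is exactly Lemma \ref{min cancellation pair} of the paper, and it is the reason a rectangle in one component is a source at the right charge -- its unique removable box is cancelled by the addable box of the empty component). Concretely, for $e=2$, $\ul{s}=(0,1)$, the charged bipartition $|((2,2),(1,1)),\ul{s}\rangle$ is totally $2$-periodic, hence every $\tilde{e}_i$ kills it even though both components are nonempty and have removable corners. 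In the $\slinf$-crystal the operators do not remove single boxes at all but vertical $e$-strips (period shifts), so ``residue of a corner'' reasoning does not apply there either. Crystal sourcehood alone simply does not force one component to be empty -- which is precisely why the paper cannot, and does not, argue that way: in Proposition \ref{nonempty components} the unitarity inequalities of \cite[Corollary 8.5]{Griffeth} are added case by case to force $\ul{\lambda}$ to have at most $e$ columns, Lemma \ref{>=e col} shows $\sle$-sourcehood (total $e$-periodicity) forces at least $e$ columns, and Lemma \ref{e col} shows that exactly $e$ columns with both components nonempty violates the $\slinf$-source pattern condition of \cite[Theorem 7.13]{GerberN}. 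Without the unitarity input this chain collapses, and your sketch supplies no substitute.

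A secondary gap of the same kind occurs in your one-component analysis: sourcehood plus ``a content bound'' does not force $\lambda$ to be a rectangle by inspecting removable corners (non-rectangular $\lambda$ with $(\lambda,\emptyset)$ finite-dimensional exist; finite-dimensionality without unitarity is much weaker, cf.\ Remark \ref{switching components}). The paper's route here avoids crystals entirely: by Lemma \ref{lemma ES} (Etingof--Stoica), unitary plus finite-dimensional is equivalent to $c_{\ul{\lambda}}=0$ together with $c_{\ul{\mu}}=1$ on the constituents of $\mathfrak{h}^\ast\otimes\ul{\lambda}$, and then an average-content computation within the cases of \cite[Corollary 8.4]{Griffeth} shows the equation $d+mc=\tfrac{1}{2}$ has the required integer solution $m$ exactly when $\lambda$ is a rectangle, which simultaneously produces the charge relation $r-q=s-e$. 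If you want to keep a crystal-theoretic forward direction, you must replace your corner argument with genuine abacus arguments of the type in Lemmas \ref{>=e col} and \ref{e col}, and you must use the unitarity inequalities as an essential ingredient rather than as a side condition to be intersected at the end.
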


It turns out that Theorem \ref{unitary+fd} can be deduced from a result of Montarani on wreath product algebras \cite{Montarani} together with the afore-mentioned criterion of Etingof-Stoica \cite{EtingofStoica}. Montarani studied symplectic reflection algebras associated to wreath products $G(\ell,1,n)$, which encompass the rational Cherednik algebras of $B_n=G(2,1,n)$. Montarani proved that if an irreducible $G(\ell,1,n)$-representation $\ul{\lambda}$ extends to a representation of a rational Cherednik algebra at some parameters then $\ul{\lambda}$ is a rectangle concentrated in a single component, and she gave the formula for the parameters for which it extends depending on the height and width of the rectangle \cite{Montarani}. That result agrees with the formulas in Theorem \ref{unitary+fd}. Etingof and Stoica proved that for the rational Cherednik algebra of a real reflection group $W$, if $L(\ul{\lambda})$ is unitary then $L(\ul{\lambda})$ is in addition finite-dimensional if and only if $L(\ul{\lambda})$ is equal to the irreducible $W$-representation $\ul{\lambda}$ \cite[Proposition 4.1]{EtingofStoica}. This implies Theorem \ref{unitary+fd}. Conversely, the combination of \cite[Proposition 4.1]{EtingofStoica} and Theorem \ref{unitary+fd} implies that for $W=B_n$, $L(\ul{\lambda})=\ul{\lambda}$ if and only if $\ul{\lambda}$ and $(e,\ul{s})$ satisfy the conditions of Theorem \ref{unitary+fd}, yielding a new proof of Montarani's result in type $B_n$.

Our proof is also of independent interest as it uses different techniques -- the combinatorial classification of unitary representations by Griffeth \cite{Griffeth} and the combinatorial classification of finite-dimensional representations arising from categorical affine Lie algebra actions \cite{ShanVasserot},\cite{Shan},\cite{Losev}. In particular, comparing the combinatorial classifications of unitary representations and finite-dimensional representations can in principle be done for $G(\ell,1,n)$ when $\ell>2$ (although the combinatorial classification in \cite{Griffeth} becomes very complicated for $\ell>2$). On the other hand, identifying unitary finite-dimensional representations by checking when $L(\ul{\lambda})=\ul{\lambda}$ is insufficient when $\ell>2$ as $G(\ell,1,n)$ is not a real reflection group and the criterion  \cite[Proposition 4.1]{EtingofStoica} need no longer hold. So we may think of the case $\ell=2$ as a good test case. In particular, Theorem \ref{unitary+fd} provides a sanity check that different combinatorial results on rational Cherednik algebra representations, often proved by intricate calculations and originating from very different methodologies, yield compatible and correct answers.


\subsection{Parabolic restriction and supports of unitary representations in type $B$}
The second and third results of this paper concern the behavior of the unitary representations in $\cO_c(W)$ under the parabolic restriction functor $\Ores^W_{W'}:\cO_c(W)\rightarrow\cO_c(W')$ defined in \cite{BE}, where $W'\subset W$ is a parabolic subgroup. We are interested in the following conjecture of Etingof:

\begin{conjecture}[Etingof's conjecture on restriction of unitary representations]
 If $L$ is a unitary irreducible representation
in $\cO_c(W)$, and $W'\subset W$ is a parabolic
subgroup of $W$, then $\Ores^W_{W'} L$ is unitary.
\end{conjecture}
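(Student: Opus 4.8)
The plan is to establish the conjecture for $W=B_n$ by reducing arbitrary parabolic restriction to the rank-lowering restriction functors that categorify the $\sle$-crystal, and then checking combinatorially that the corresponding crystal operators preserve Griffeth's unitarity inequalities. First I would use transitivity of $\Ores$ to reduce to maximal parabolic subgroups of $B_n$, namely $S_n$ and the subgroups $S_k\times B_{n-k}$. The symmetric-group directions reduce to the type $A$ classification already settled by Etingof and Stoica \cite{EtingofStoica}, so the genuinely new case is the rank-lowering restriction $\Ores^{B_n}_{B_{n-1}}\colon \cO_{e,\ul{s}}(n)\to\cO_{e,\ul{s}}(n-1)$, which is exactly the functor categorifying the level $2$ crystal and hence the heart of the matter.

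Second, I would decompose this functor by residue as $\Ores^{B_n}_{B_{n-1}}\cong\bigoplus_{i\in\mathbb{Z}/e\mathbb{Z}}E_i$, where $E_i$ denotes $i$-restriction. By the work of Shan, Shan--Vasserot, and Losev \cite{Shan},\cite{ShanVasserot},\cite{Losev}, the $E_i$ categorify the Chevalley generators of $\sle$ acting on the level $2$ Fock space, and the associated crystal operators $\tilde{e}_i$ act on bipartitions by removing a good $i$-box. The combinatorial core of the argument is then the lemma: if $\ul{\lambda}$ satisfies Griffeth's inequalities for unitarity and $\tilde{e}_i\ul{\lambda}\neq 0$, then $\tilde{e}_i\ul{\lambda}$ satisfies them as well. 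This should be a direct, finite manipulation of Griffeth's conditions describing how removing a good box of residue $i$ changes the contents that control them, carried out separately for $\ul{\lambda}=(\lambda,\emptyset)$ and $\ul{\lambda}=(\emptyset,\lambda)$; Theorem \ref{unitary+fd}, which pins down the unitary crystal sources, serves as the base case and a consistency check that the unitary locus is closed under moving toward a source.

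Third, I would promote this combinatorial statement to the categorical one. Since the crystal categorification identifies the socle and head of $E_iL(\ul{\lambda})$ with $L(\tilde{e}_i\ul{\lambda})$, and since unitary modules have small and well-understood support, I expect $\Ores^{B_n}_{B_{n-1}}L(\ul{\lambda})$ to be semisimple with every simple constituent of the form $L(\tilde{e}_i\ul{\lambda})$, each of which is unitary by the lemma. Semisimplicity together with constituent-wise unitarity would then give positivity of the contravariant form on the restriction, hence unitarity, and iterating yields the statement for all parabolics.

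The hard part will be precisely the passage from constituent-wise unitarity to unitarity of the whole restricted module, that is, establishing positivity (not merely positive semidefiniteness on each simple) of the contravariant form on $\Ores^{B_n}_{B_{n-1}}L(\ul{\lambda})$. The combinatorial lemma controls only the simple subquotients; by itself it does not rule out a degenerate or indefinite form coming from self-extensions among the unitary constituents, nor does it settle how the Bezrukavnikov--Etingof isomorphism \cite{BE} interacts with the Hermitian structure. Overcoming this should require showing that the restriction of a unitary module is semisimple---for which I would use the biadjointness of $\Ores$ and $\Oind$ together with the vanishing of the relevant $\Ext^1$ groups that can be read off from the crystal and support picture---and that the form so induced is the positive-definite one. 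This is exactly the step at which the conjecture remains open in general, and where the explicit level $2$ combinatorics in type $B$ is what makes it tractable.
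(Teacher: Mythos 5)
First, a framing point: the statement you were asked about is stated in the paper as a \emph{conjecture}. The paper itself does not prove it; it proves Theorem \ref{Main2}, which covers only $\Ores^{B_n}_{B_{n-1}}$ and, when $\ul\la^t$ has nonzero depth in the $\slinf$-crystal, $\Ores^{B_n}_{B_{n-e}\times S_e}$. Your plan aims at the full conjecture for $W=B_n$ by reducing to maximal parabolics, and this reduction is where the genuine gap lies. The maximal parabolics of $B_n$ are $S_n$ and $B_{n-k}\times S_k$; for $k\geq 2$ the subgroup $B_{n-k}\times S_k$ fixes no coordinate vector $\pm e_i$, hence is not contained in any conjugate of $B_{n-1}$, so transitivity of $\Ores$ cannot express $\Ores^{B_n}_{B_{n-k}\times S_k}$ through the rank-lowering functor you analyze. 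Nor is it a ``symmetric-group direction'': it is a mixed restriction \emph{out of} $\cO_{e,\ul{s}}(n)$, and knowing which type $A$ simples are unitary (Etingof--Stoica) says nothing about whether the restriction of a unitary $B_n$-module lands on them -- the same objection applies even to $\Ores^{B_n}_{S_n}$. This is exactly why the paper stops short of the full conjecture: the only mixed case it handles is $B_{n-e}\times S_e$ under a depth hypothesis, via Losev's description of the Heisenberg/$\tilde a_\sigma$ functors (Corollary \ref{slinf res}), together with the $\Upsilon_k^-$ half of Proposition \ref{crystal ops unitaries} -- machinery entirely absent from your outline. Completing your plan would require proving unitarity is preserved by $\Ores^{B_n}_{B_{n-k}\times S_k}$ for all $k$, which remains open.

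For the corank-one case your strategy does coincide with the paper's: decompose $\Ores^{B_n}_{B_{n-1}}\simeq\bigoplus_i\EE_i$, show combinatorially that $\tilde e_i$ preserves Griffeth's conditions (Proposition \ref{crystal ops unitaries}, Case by case through \cite[Corollaries 8.4 and 8.5]{Griffeth}), show the restriction of a unitary simple is semisimple (Proposition \ref{weak res unitaries semisimple}), and conclude. But two of your expectations differ from what the paper actually does. The semisimplicity step is not obtained from biadjointness or $\Ext^1$-vanishing ``read off from the crystal''; it is the concrete Lemma \ref{easy i-res}: Griffeth's inequalities force the removable boxes of a unitary $\ul\la$ to have (essentially) pairwise distinct residues mod $e$, so $\EE_iL(\ul\la)$ has simple head isomorphic to its simple socle occurring with multiplicity one (by comparison with $\EE_i\Delta(\ul\la)$), hence is simple or zero; the single exceptional case with a repeated residue is settled by a $c$-function comparison. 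And the step you flag as the hard part -- passing from constituent-wise unitarity to positivity of the form -- is not where the difficulty sits in the paper's formulation: once each $\EE_iL(\ul\la)$ is shown to be simple or zero, the restriction is a direct sum of simples whose labels satisfy the unitarity conditions, and the direct sum of their positive-definite contravariant forms witnesses unitarity. The paper deliberately never touches the analytic side (it ``never interacts with the definition of unitary representation''); compatibility of the Bezrukavnikov--Etingof functor with Hermitian structures is the subject of Shelley-Abrahamson's work and is not needed for the statement as the paper interprets it.
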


\noindent Recently, Shelley-Abrahamson extended Etingof's conjecture to consider arbitrary signatures \cite[Section 6.3]{Shelley-Abrahamson}.

Our second theorem is a proof using the combinatorics of the $\sle$-crystal that the parabolic restriction functor $\Ores^n_{n-1}:=\Ores^{B_n}_{B_{n-1}}:\cO_{e,\ul{s}}(n)\rightarrow\cO_{e,\ul{s}}(n-1)$ takes unitary irreducible representations to unitary representations, as well as the analogous statement for the functors corresponding to the operators in the $\slinf$-crystal that remove $e$-strips.\footnote{We note that in the case that $L(\ul\la)$ has full support, which is the case if and only if $\ul\la^t$ has depth $n=|\ul\la|$ in the $\sle$-crystal on $\cF_{e,\ul{s}}^2$, then the first statement of Theorem \ref{Main2} should follow from the results of \cite{Shelley-Abrahamson}. We thank Stephen Griffeth for pointing this out. However, there are other unitary representations floating around that are not sent to $0$ by $\Ores^n_{n-1}$ but that don't have full support either. In advance of doing the work we can't say too much about this, so we don't try to take any shortcuts.} 

\begin{theorem}\label{Main2} Let $L(\ul\la)\in\cO_{e,\ul{s}}(n)$ be a unitary irreducible representation. Then $\Ores^n_{n-1}L(\ul\la)$ is unitary or $0$. Moreover, if $\ul\la^t$ has nonzero depth in the $\slinf$-crystal, then $\Ores^{B_n}_{B_{n-e}\times S_e}L(\ul\la)$ is a unitary irreducible representation, and $\Ores^n_{n-1}L(\ul\la)=0$.
\end{theorem}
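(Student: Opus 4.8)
The plan is to reduce the entire statement to two combinatorial facts about how crystal operators interact with Griffeth's closed conditions for unitarity, and then to feed these into the categorification dictionary of \cite{Shan}, \cite{ShanVasserot}, \cite{Losev}, \cite{GerberN} identifying the parabolic restriction functors with crystal operators. Throughout I would use that the $\slinf$-crystal operators are defined only on the source vertices of the $\sle$-crystal, so that a bipartition at nonzero depth in the $\slinf$-crystal is automatically an $\sle$-crystal source. The two facts I would isolate first, as standalone lemmas, are: (I) if $\ul\la$ satisfies Griffeth's unitarity condition and the $\sle$-crystal operator $\tilde e_i$ removes a box from $\ul\la^t$, then the resulting bipartition again satisfies the condition; and (II) the same preservation holds for the $\slinf$-crystal operator removing an $e$-strip. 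Proving (I) and (II) is the combinatorial heart of the argument.

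For the first assertion I would argue as follows. By the branching rules of \cite{Losev}, \cite{ShanVasserot}, the functor $\Ores^n_{n-1}$ decomposes as $\bigoplus_i {}^i\Res$, and ${}^i\Res L(\ul\la)$ is nonzero exactly when $\tilde e_i$ removes a box from $\ul\la^t$, in which case its simple socle and top are $L(\ul\mu)$ with $\ul\mu^t=\tilde e_i\ul\la^t$. If $\ul\la^t$ is an $\sle$-crystal source then every ${}^i\Res$ kills $L(\ul\la)$ and $\Ores^n_{n-1}L(\ul\la)=0$, an allowed outcome. Otherwise Lemma (I) guarantees that each label $\ul\mu$ appearing satisfies Griffeth's condition, so every composition factor of $\Ores^n_{n-1}L(\ul\la)$ is unitary. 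To upgrade ``all factors unitary'' to ``the module is unitary'', I would use the standard fact that the positive-definite contravariant form on $L(\ul\la)$ is transported by the Bezrukavnikov--Etingof restriction functor to a positive-semidefinite contravariant form on $\Ores^n_{n-1}L(\ul\la)$; a module in $\cO$ carrying such a form and having only unitary (hence form-nondegenerate) composition factors has vanishing radical and is therefore semisimple and unitary.

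For the second assertion, nonzero depth of $\ul\la^t$ in the $\slinf$-crystal forces $\ul\la^t$ to be an $\sle$-crystal source, whence $\Ores^n_{n-1}L(\ul\la)=0$ as above. For the strip functor I would invoke the categorification of the $\slinf$- (Heisenberg) crystal by $\Ores^{B_n}_{B_{n-e}\times S_e}$ from \cite{GerberN}: this functor sends $L(\ul\la)$ to $L(\ul\mu)\boxtimes L_{S_e}$, where $\ul\mu^t$ is obtained from $\ul\la^t$ by the $\slinf$-crystal operator removing an $e$-strip and $L_{S_e}\in\cO_{1/e}(S_e)$ is the cuspidal finite-dimensional unitary simple supplied by the type $A$ analogue of Theorem \ref{unitary+fd} (equivalently, the Etingof--Stoica classification). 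Lemma (II) shows $\ul\mu$ satisfies Griffeth's condition, so $L(\ul\mu)$ is unitary; since $L_{S_e}$ is unitary and an external tensor product of unitary modules is unitary, $\Ores^{B_n}_{B_{n-e}\times S_e}L(\ul\la)$ is unitary and irreducible.

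The main obstacle I anticipate is proving Lemmas (I) and (II), and especially (II). In the single-box case one must locate precisely which box $\tilde e_i$ deletes from $\ul\la^t$ and check that its removal cannot violate any of the content inequalities in Griffeth's criterion; the geometry of addable and removable $i$-boxes, together with the rectangular structure forced on unitary labels, should keep this tractable. The $e$-strip case is harder: pinning down the exact strip removed by the Heisenberg crystal operator and verifying that the simultaneous deletion of its $e$ boxes preserves every content condition demands a careful analysis of how strips sit relative to the charged contents, and this is where the bulk of the work will lie. A secondary point requiring care is the precise identification of the $S_e$-factor $L_{S_e}$ and the confirmation that the functor image is genuinely irreducible rather than merely carrying the predicted socle.
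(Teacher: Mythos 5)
Your Lemmas (I) and (II) are exactly the combinatorial heart of the paper's proof (they are Proposition \ref{crystal ops unitaries}), but the scaffolding you build around them contains a gap that is fatal to the argument as proposed: the ``standard fact'' you invoke to pass from unitary composition factors to unitarity of the module. There is no known construction that transports the positive-definite contravariant form on $L(\ul\la)$ through the Bezrukavnikov--Etingof functor to a nonzero positive-semidefinite contravariant form on $\Ores^n_{n-1}L(\ul\la)$; the absence of such a transport is precisely why Etingof's conjecture is a conjecture, and the only partial progress in this direction (the Dunkl weight function of \cite{Shelley-Abrahamson}) is expected to cover only the full-support case, as noted in the footnote accompanying Theorem \ref{Main2}. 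So this step assumes a statement essentially equivalent to what is being proved. Two subsidiary steps are also unjustified: (a) Lemma (I) controls only the head and socle of each indecomposable $\EE_iL(\ul\la)$, so ``every composition factor of $\Ores^n_{n-1}L(\ul\la)$ is unitary'' does not follow from it; and (b) even granting a positive-semidefinite contravariant form, ``unitary composition factors implies vanishing radical'' is false --- on $L\oplus L$ the form that is the unitary form on the first summand and zero on the second is contravariant, positive semidefinite, has unitary factors, and has radical $L$. The paper's route avoids Hermitian forms entirely: Proposition \ref{weak res unitaries semisimple} proves by pure representation theory that each $\EE_iL(\ul\la)$ is irreducible or $0$, using the surjection $\EE_i\Delta(\ul\la)\twoheadrightarrow\EE_iL(\ul\la)$, the multiplicity count $[\Delta(\ul\mu):L(\ul\mu)]=1$, head $\cong$ socle, and (in the one case where two removable boxes share a residue mod $e$) a comparison of $c$-function values. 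Combined with Proposition \ref{crystal ops unitaries}, this exhibits $\Ores^n_{n-1}L(\ul\la)$ as a direct sum of simples whose labels satisfy Griffeth's criterion, hence unitary by \cite[Corollaries 8.4 and 8.5]{Griffeth}, with no form-theoretic input anywhere.

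The second gap is your opening claim that the $\slinf$-crystal operators are defined only on $\sle$-crystal sources, so that nonzero $\slinf$-depth automatically forces $\sle$-depth zero. This is false on the Fock space at large: the $\slinf$-crystal is defined on all charged bipartitions (one computes it by commuting past the operators $\tilde e_i$), and bipartitions of nonzero depth in both crystals are plentiful --- they are exactly the simples whose supports involve $B_{n'}\times S_e^m\times S_1^p$ with $m,p>0$. The implication you need is true for \emph{unitary} labels, but it is a theorem (the observation following Corollary \ref{unitary supports}), extracted from the same case-by-case analysis that proves your Lemmas (I) and (II); it cannot be quoted as a general crystal fact before that analysis is done. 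Finally, the identification $\Ores^{B_n}_{B_{n-e}\times S_e}L(\ul\la)\cong L(\ul\mu)\otimes L(e)$ is not in \cite{GerberN}, which is purely combinatorial; the paper deduces it from \cite[Proposition 4.3]{LosevCat} together with adjointness and the support classification (Corollary \ref{slinf res}).
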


\noindent By standard properties of unitary modules, $\Ores^n_{n-1}L(\ul{\lambda})$ is then semisimple, but our argument goes the other way around: semisimplicity is part and parcel of the proof. We show that each direct summand $\mathrm{E}_i$ of the functor $\Ores^{n}_{n-1}$ sends a unitary $L(\ul{\lambda})$ to an irreducible representation or to $0$, and we show that $\tilde{e}_i$, the $\sle$-crystal operator sending $\ul{\lambda}^t$ to $\ul{\mu}^t$ if $L(\ul{\mu})$ is in the head of $\mathrm{E}_i L(\ul{\lambda})$, sends bipartitions labeling unitary irreducible representations to bipartitions labeling unitary irreducible representations. These two steps combined imply that $\Ores^n_{n-1}$ preserves unitarity. Along with the corresponding statements for the $\slinf$-crystal and the functor associated to it, these steps are carried out in Propositions \ref{crystal ops unitaries}, \ref{weak res unitaries semisimple}, and \ref{slinf res}. 

Our methods are combinatorial. We never interact with the definition of unitary representation, we only use the combinatorial classification of bipartitions labeling irreducible representations that are unitary, the combinatorial rules governing the two crystals, and some very easy representation-theoretic consequences of the categorical actions on $\bigoplus\limits_n\cO_{e,\ul{s}}(n)$ connecting the two crystals to parabolic restriction functors \cite{Shan},\cite{ShanVasserot},\cite{ChuangRouquier}.
As \cite[Corollaries 8.4 and 8.5]{Griffeth} give a case-by-case classification of bipartitions labeling unitary representations, our proofs are an exhausting case-by-case analysis as well. We have tried to include some pictures and examples along the way to improve the readability.

As a corollary of the work done in the proof of Proposition \ref{crystal ops unitaries} together with Theorem \ref{unitary+fd}, we can classify the supports of all unitary irreducible representations in $\cO_{e,\ul{s}}(n)$. The module $L(\ul\la)$ is a module over the subalgebra $\mathbb{C}[x_1,\ldots, x_n]$ of the rational Cherednik algebra of $B_n$. The support of $L(\ul\la)$ is defined to be its support as a $\mathbb{C}[x_1,\ldots, x_n]$-module. As explained in \cite{Losev}, the support of $L(\ul\la)$ is of the form $B_n\mathfrak{h}^{W'}$ where $\mathfrak{h}\cong \mathbb{C}^n$ is the reflection representation of $B_n$ and $W'\subseteq B_n$ is a subgroup of the form $W'=B_{n'}\times S_{e}^{ m}\times S_1^{ p}\cong B_{n'}\times S_e^{m}$ such that $n=n'+em+p$. The parabolic subgroup $W'$ is then minimal with respect to the property that $\Ores^{B_n}_{W'}L(\ul\la)\neq 0$, and $\Ores^{B_n}_{W'}L(\ul\la)$ is a finite-dimensional representation in $\cO_c(W')$ \cite{BE}.\footnote{The additional data of the isotypy class $L(\ul\mu)$ of $\Ores^{B_n}_{W'}L(\ul\la)$ together with $W'$ gives the \textit{cuspidal support} of $L(\ul\la)$ as the pair $(L(\ul\mu),W')$.} Losev showed that the number $m$ of copies of $S_e$ in $W'$ is equal to the depth of $\ul\la^t$ in the $\slinf$-crystal on $\cF_{e,\ul{s}}^2$, and the number $p$ of copies of $S_1\cong \{1\}$ in $W'$ is equal to the depth of $\ul\la^t$ in the $\sle$-crystal on $\cF^2_{e,\ul{s}}$ \cite{Losev}. 

\begin{corollary}\label{Main3}
The set of parabolic subgroups $W'\subseteq B_n$ such that $B_n\mathfrak{h}^{W'}$ is the support of some unitary representation $L(\ul\la)\in\cO_{e,\ul{s}}(n)$ is given by:
\begin{itemize}
\item $W'=\{1\}$,
\item $W'=S_e^m$ if and only if $n=em$ for some $m\in\mathbb{N}$,
\item for every $q,r\in\mathbb{N}$ such that $rq\leq n$ and $r-q=s-e$, $W'=B_{rq}$,
\item for every $q,r\in\mathbb{N}$ such that $rq+em=n$ for some $m\in\mathbb{N}$ and $r-q=s-e$, $W'=B_{rq}\times S_e^m$,
\item for every $q,r\in\mathbb{N}$ such that $rq\leq n$ and $r-q=-s$, $W'=B_{rq}$,
\item for every $q,r\in\mathbb{N}$ such that $rq+em=n$ for some $m\in\mathbb{N}$ and $r-q=-s$, $W'=B_{rq}\times S_e^m$.
\end{itemize}
\end{corollary}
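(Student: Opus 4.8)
The plan is to deduce Corollary~\ref{Main3} by combining the support description of Losev with the classification of unitary representations already assembled in this paper. Recall from the discussion preceding the statement that the support of $L(\ul\la)$ is $B_n\mfh^{W'}$ where $W' = B_{n'}\times S_e^m$ with $n = n'+em+p$, and that Losev's theorem identifies $m$ with the depth of $\ul\la^t$ in the $\slinf$-crystal and $p$ with the depth of $\ul\la^t$ in the $\sle$-crystal on $\cF^2_{e,\ul{s}}$. Thus the entire problem reduces to computing, for each bipartition $\ul\la$ labeling a \emph{unitary} irreducible representation, the two crystal depths of $\ul\la^t$, and then reading off $W'$. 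The inputs I would assume are Theorem~\ref{unitary+fd} (which pins down exactly which $\ul\la$ give finite-dimensional unitaries, i.e.\ depth $0$ in both crystals) and the work carried out in the proof of Proposition~\ref{crystal ops unitaries}, which tracks how the crystal operators move through the set of unitary bipartitions.

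First I would organize the argument around the source vertices of the two crystals. A bipartition $\ul\la^t$ of $\slinf$-depth $0$ is an $\slinf$-crystal source, and among those the $\sle$-depth $0$ vertices are exactly the full crystal sources; by Theorem~\ref{unitary+fd} the unitary ones among these are precisely $(\lambda,\emptyset)$ and $(\emptyset,\lambda)$ for a rectangle $\lambda$ with $r$ rows and $q$ columns, subject to $r-q = s-e$ (respectively $r-q=-s$). This accounts for the finite-dimensional case $W' = B_{rq}$ with $rq=n$, and more generally, after applying $m$ steps of the $\slinf$-crystal (each removing an $e$-strip and, by Theorem~\ref{Main2}, preserving unitarity), for the case $W' = B_{rq}\times S_e^m$ with $rq+em=n$ and the same content condition $r-q=s-e$ or $r-q=-s$. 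This yields the third through sixth bullet points.

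Next I would handle the remaining unitary representations, namely those whose $\sle$-depth $p$ is positive. The claim is that for these the relevant $B_{n'}$ factor is trivial, so that $W' = \{1\}$ (when $m=0$) or $W' = S_e^m$ (when $n=em$, $p=n-em>0$ with $n'=0$). The key computational step, which I expect to be the main obstacle, is to show that once $\ul\la$ is a unitary bipartition that is \emph{not} of rectangular-in-one-component type, its $\slinf$-source reached by removing $e$-strips has full $\sle$-depth equal to what remains, forcing $n'=0$; equivalently, that a unitary $\ul\la^t$ of positive $\sle$-depth cannot simultaneously sit over a nontrivial rectangle via the $\slinf$-crystal. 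This is exactly the kind of case-by-case bookkeeping on contents that the proof of Proposition~\ref{crystal ops unitaries} already performs, so I would extract the needed depth computations from that proof rather than redo them; the content condition $r-q=s-e$ or $r-q=-s$ is precisely the constraint under which a nonempty rectangle survives as an $\slinf$-source, and its failure forces the partition to be fully consumed.

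Finally I would assemble the list. Having shown that every unitary $L(\ul\la)$ has $\slinf$-depth $m$ and $\sle$-depth $p$ with $W' = B_{rq}\times S_e^m$ where either $rq=0$ (giving $W'=\{1\}$ or $S_e^m$) or $rq>0$ with $r-q\in\{s-e,-s\}$, and conversely that each such $W'$ is realized by an explicit unitary $\ul\la$ (taking the rectangle $(\lambda,\emptyset)$ or $(\emptyset,\lambda)$ and prepending $m$ $e$-strips, which by Theorem~\ref{Main2} keeps unitarity intact), I would match each case against the six bullet points. The forward direction is the depth computation above; the converse (``if and only if'' and ``for every $q,r$'') is the explicit construction, where I must check that the constructed $\ul\la$ does lie in Griffeth's unitary locus and that its crystal depths are the asserted $m$ and $p$ --- again drawing on the normal-form analysis in Proposition~\ref{crystal ops unitaries}. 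The only genuine subtlety is ensuring no overlaps or omissions between the $W'=\{1\}$, $W'=S_e^m$, and $B_{rq}(\times S_e^m)$ families, which amounts to checking that $rq=0$ and $rq>0$ partition the possibilities cleanly.
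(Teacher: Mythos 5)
Your overall reduction is the same as the paper's: by Losev's theorem the corollary is equivalent to computing the two crystal depths of every unitary bipartition, and the paper does exactly this, packaging the case analysis of Proposition \ref{crystal ops unitaries} and Theorem \ref{unitary+fd} into Corollary \ref{unitary supports} and then translating into supports. However, your key middle step is false. You claim that a unitary $\ul\la^t$ of positive $\sle$-depth must have trivial source, ``forcing $n'=0$,'' so that only $W'=\{1\}$ or $W'=S_e^m$ can occur for such $\ul\la$. Take the paper's own example with $e=5$, $\ul{s}=(0,3)$, $\ul\la=((6^4,1^2),\emptyset)$: this is unitary by \cite[Corollary 8.4, Case (d)]{Griffeth} (the rectangle $(6^4)$ with the type $A$ unitary partition $(1^2)$ stacked underneath), it has $\slinf$-depth $0$ and $\sle$-depth $2>0$, and its $\sle$-source is $((4^6),\emptyset)$, i.e.\ the rectangle $(6^4)$; hence $n'=24\neq 0$ and $W'=B_{24}$ with $24<26=n$. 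This family --- case (3)/(5) of Corollary \ref{unitary supports}, $\ul\la^t=\tilde{f}_I((r^q),\emptyset)$ with $I$ nonempty --- is precisely what makes the third and fifth bullets read $rq\leq n$ rather than $rq=n$. Under your dichotomy (rectangles give $rq=n$ or $rq+em=n$; everything else has $n'=0$), those bullets could only be established with $rq=n$, and worse, your argument would positively assert that no $W'=B_{rq}$ with $rq<n$ occurs, contradicting the statement being proved. The correct structural fact extracted from the case analysis is different: a unitary bipartition cannot have \emph{both} depths nonzero, but positive $\sle$-depth is entirely compatible with a nontrivial rectangle at the source.

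A second, smaller error concerns direction: you invoke Theorem \ref{Main2} to justify that ``prepending $m$ $e$-strips \dots keeps unitarity intact.'' Theorem \ref{Main2} goes the other way --- parabolic \emph{restriction} (removing strips, moving down the crystals) of a unitary is unitary --- and says nothing about $\tilde{a}_{(m)}$ applied to a unitary rectangle. That $\tilde{a}_{(m)}((r^q),\emptyset)$ is again unitary (e.g.\ $\la=(q^r,e^m)$ when $q>e$, or the two-component bipartitions of Case (ii)(f) when $q<e$) must be read off from Griffeth's conditions, which is exactly what the proof of Proposition \ref{crystal ops unitaries} does in Cases (i)(d), (i)(e), and (ii)(f). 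So both the forward classification and the realization of every bullet have to come from that case-by-case analysis; the two crystal-theoretic shortcuts you propose in its place do not hold.
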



\section{Background and notation}
\subsection{Bipartitions, the level $2$ Fock space, and the category $\cO_{e,\ul{s}}(n)$}
A partition is a finite, non-increasing sequence of positive integers: $\lambda=(\lambda_1,\lambda_2,\dots,\lambda_r)$ such that $\lambda_1\geq \lambda_2\geq \dots\geq \lambda_r> 0$.  We use the notation $\emptyset$ for the empty sequence $()$ when $r=0$. Let $\mathcal{P}=\{\lambda\mid \lambda\hbox{ is a partition}\}$ be the set of all partitions. We use the notation $\emptyset$ for the empty sequence $()$ when $r=0$. For a partition $\lambda$ we write $|\lambda|=\sum_{i=1}^r\lambda_i$ and we identify $\lambda$ with its Young diagram, the upper-left-justified array of $|\lambda|$ boxes in the plane with $\lambda_1$ boxes in the top row, $\lambda_2$ boxes in the second row from the top,..., $\lambda_r$ boxes in the $r$'th row from the top. If a box of $\lambda$ is in row $x$ and column $y$ then its content is defined to be $\mathrm{ct}(b):=y-x$. A partition $\lambda$ is a rectangle if and only if $\lambda_1=\lambda_2=\dots=\lambda_r$. The transpose of $\lambda$, denoted $\lambda^t$, is defined to be the partition whose Young diagram has $\lambda_1$ boxes in the leftmost column, $\lambda_2$ boxes in the second column from the left, and so on. 

Let $B_n$ be the type $B$ Weyl group of rank $n$, also known as the hyperoctahedral group or the complex reflection group $G(2,1,n)$. Then $B_n\cong \left(\mathbb{Z}/2\mathbb{Z}\right)^n\rtimes S_n$ and the irreducible representations of $B_n$ over $\mathbb{C}$ are labeled by bipartitions of $n$ defined as $$\mathcal{P}^2(n):= \{\ul{\lambda}=(\lambda^1,\lambda^2)\mid \;\lambda^1,\lambda^2\in\mathcal{P}\hbox{ and }|\lambda^1|+|\lambda^2|=n\}.$$
We call $\lambda^1$ and $\lambda^2$ the components of $\ul{\lambda}$. Set $$\mathcal{P}^2=\bigcup_{n\geq 0}\mathcal{P}^2(n)$$ and fix $e\in\mathbb{N}_{\geq 2}$ and $\ul{s}=(s_1,s_2)\in\mathbb{Z}^2$. 

The level $2$ Fock space $$\cF^2_{e,\ul{s}}=\bigoplus_{\ul{\mu}\in\mathcal{P}^2}\mathbb{C}|\ul{\mu},\ul{s}\rangle$$ depending on the parameter $(e,\ul{s})$ has formal basis given by {\em charged bipartitions} $|\ul{\mu},\ul{s}\rangle$, which are simply bipartitions $\ul{\mu}$ with the additional data of the charge $\ul{s}$. A charged bipartition $|\ul{\mu},\ul{s}\rangle$ may be identified with the pair of Young diagrams for $\mu^1$ and $\mu^2$ with the additional data that each box $b\in\ul{\mu}$ is filled with its {\em charged content}, defined as $\widetilde{\mathrm{ct}}(b)=s_j+y-x$, where $b$ belongs to row $x$ and column $y$ of $\mu^j$ for $j\in\{1,2\}$. Let $s=s_2-s_1$. The algebra we will associate to the level $2$ Fock space depends only on $e$ and $s$, so unless otherwise noted we will always take $\ul{s}=(0,s)$ for some $s\in\mathbb{Z}$. 

\begin{convention}\label{transpose convention}
To $\ul{\lambda}\in\mathcal{P}^2$ we will associate the charged bipartition $|\ul{\lambda}^t,\ul{s}\rangle\in\cF^2_{e,\ul{s}}$.
\end{convention}

The rational Cherednik algebra is a deformation of $\mathbb{C}[x_1,\dots,x_n,y_1,\dots,y_n]\rtimes \mathbb{C} B_n$ with multiplication depending on a pair of parameters $(c,d)\in \mathbb{C}^2$ \cite{EtingofGinzburg, Griffeth}. We write $H_{e,\ul{s}}(B_n)$ for the rational Cherednik algebra depending on parameters $(c,d)$ as defined in \cite[Equation (2.12)]{Griffeth}, where $c$ and $d$ are determined from the Fock space parameter $(e,\ul{s})$ by the formulas \cite{ShanVasserot}:
$$c=\frac{1}{e},\qquad d=-\frac{1}{2}+\frac{s}{e}$$ where as above, $\ul{s}=(s_1,s_2)$ and $s=s_2-s_1$. Thus, the rational Cherednik algebra determined from the Fock space parameter $(e,\ul{s})$ is the same as the rational Cherednik algebra determined from the Fock space parameter $(e,\ul{s}+(a,a))$ for any $a\in\mathbb{Z}$. This justifies our fixing $\ul{s}=(0,s)$ once and for all.

The algebra $H_{e,\ul{s}}(B_n)$ has a category $\cO_{e,\ul{s}}(n)$ of representations which in particular contains all finite-dimensional representations \cite{GGOR}. The irreducible representations in $\cO_{e,\ul{s}}(n)$ are labeled by $\mathcal{P}^2(n)$ and denoted $L(\ul{\lambda})$ for $\ul{\lambda}\in\mathcal{P}^2(n)$ \cite{GGOR}. The irreducible representation $L(\ul{\lambda})$ comes with a non-degenerate contravariant Hermitian form and $L(\ul{\lambda})$ is called unitary if this form is positive definite \cite{EtingofStoica}. The unitary $L(\ul{\lambda})$ were classified combinatorially by Griffeth \cite[Corollary 8.4]{Griffeth}. In fact, he considers arbitrary parameters $(c,d)\in\mathbb{C}^2$ without any reference to the Fock space. It is known that the resulting category $\cO$ is equivalent to either the one arising from a level $2$ Fock space as above, or to a direct sum of tensor products of category $\cO$'s for rational Cherednik algebras of symmetric groups \cite[Theorem 6.3]{Rouquier}. In this paper, we are focused on the more interesting case, when the Grothendieck group of $\cO$ is a level $2$ Fock space. The other case leaves little room for finite-dimensional unitary modules due to the paucity of finite-dimensional modules in type $A$ (only the row or column can ever be finite-dimensional, and this happens only when the parameter is rational with denominator equal to the number of boxes \cite{BEG}). Similarly, all the questions we are concerned with in this paper -- restriction of the unitary representations, their positions in the crystals on the Fock space and their supports -- are known in type $A$, hence known in the case of a tensor product of type $A$ categories.

The element $\sum_{i=1}^n x_iy_i+y_ix_i\in H_{e,\ul{s}}(B_n)$ acts on the irreducible $\mathbb{C} B_n$-representation $\ul{\lambda}$, the lowest weight space of $L(\ul{\lambda})$, by a scalar $c_{\ul{\lambda}}$. Using the formula given in \cite[Section 5.1]{GGJL} and then correcting for the renormalization introduced in that formula, we have the following formula for $c_{\ul{\lambda}}$ in terms of the parameter $(e,\ul{s})$:
$$c_{\ul{\lambda}}=|\lambda^1|+\frac{s}{e}\left(|\lambda^2|-|\lambda^1|\right)-\frac{2}{e}\sum_{b\in\ul{\lambda}}\mathrm{ct}(b)$$
where the sum runs over all boxes $b$ in $\ul{\lambda}$. The category $\cO_{e,\ul{s}}(n)$ is a lowest weight category\footnote{ i.e. a ``highest weight category" but with lowest instead of highest weights} with order given by $\ul\la<_c\ul\mu$ if and only if $c_{\ul\la}<c_{\ul\mu}$. In particular: if $[\Delta(\ul\la):L(\ul\mu)]>0$ then $c_{\ul\la}<c_{\ul\mu}$; the head of $\Delta(\ul\la)$ is $L(\ul\la)$; and $[\Delta(\ul\la):L(\ul\la)]=1$.

By abuse of notation, write $\lambda$ for the irreducible $B_n$-representation labeled by $\lambda$. Write $\mathfrak{h}^\ast$ for the reflection representation of $B_n$.
\begin{lemma}\cite{EtingofStoica}\label{lemma ES}
A unitary representation $L(\ul{\lambda})$ is finite-dimensional if and only if $L(\ul{\lambda})=\ul{\lambda}$. The statement $L(\ul{\lambda})=\ul{\lambda}$ holds if and only if $c_{\ul{\lambda}}=0$ and for every irreducible constituent $\ul{\mu}$ of the $B_n$-representation $\mathfrak{h}^\ast\otimes\ul{\lambda}$, we have $c_{\ul{\mu}}=1$.
\end{lemma}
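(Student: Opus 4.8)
\emph{Overall strategy.} The whole statement is really about the standard module $\Delta(\ul\lambda)=\mathbb{C}[\mfh]\otimes\ul\lambda$, graded by polynomial degree, together with its $B_n$-contravariant form $\langle\cdot,\cdot\rangle$, characterized by the adjunction $\langle x_iu,w\rangle=\langle u,y_iw\rangle$ and restricting in degree $0$ to the positive-definite invariant form on $\ul\lambda$. Since $L(\ul\lambda)=\Delta(\ul\lambda)/\mathrm{rad}$ with $\mathrm{rad}$ the graded radical of this form, the first thing I would record is the reformulation
$$L(\ul\lambda)=\ul\lambda\iff x_iv\in\mathrm{rad}\ \text{ for all }i,v\in\ul\lambda\iff \langle\cdot,\cdot\rangle\big|_{\mfh^\ast\otimes\ul\lambda}\equiv 0,$$
which holds because the degree-$1$ space $\mfh^\ast\otimes\ul\lambda$ generates $\Delta(\ul\lambda)_{\geq1}$ and the form is block-diagonal with nondegenerate degree-$0$ block. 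This reduces both parts to the degree-$1$ form.

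\emph{Part 1.} The implication $L(\ul\lambda)=\ul\lambda\Rightarrow$ (unitary and finite-dimensional) is immediate, since $\ul\lambda$ is finite-dimensional and carries the positive-definite invariant form of the $B_n$-irreducible $\ul\lambda$. For the converse I would use the $\mathfrak{sl}_2$-triple inside the Cherednik algebra with raising $\mathbf e=\tfrac12\sum_i x_i^2$, lowering $\mathbf f=-\tfrac12\sum_i y_i^2$, and Cartan $[\mathbf e,\mathbf f]$, which acts on the lowest-weight space $\ul\lambda$ by a positive multiple of $c_{\ul\lambda}$. The adjunction gives $\mathbf e^\dagger=-\mathbf f$, so for a lowest-weight vector $v$ (where $\mathbf f v=0$), positivity of the form yields $\|\mathbf e v\|^2\geq0$, a positive multiple of $c_{\ul\lambda}\|v\|^2$, whence $c_{\ul\lambda}\geq0$; meanwhile finite-dimensionality makes $v$ a lowest-weight vector of the finite-dimensional $\mathfrak{sl}_2$-module $L(\ul\lambda)$, forcing its weight (again a positive multiple of $c_{\ul\lambda}$) to be $\leq 0$. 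Hence $c_{\ul\lambda}=0$. Finally, summing the adjunction identity over an orthonormal basis of $\mfh^\ast$ gives $\sum_i\|x_iv\|^2=\big\langle v,\big(\textstyle\sum_i[y_i,x_i]\big)v\big\rangle=c_{\ul\lambda}\|v\|^2=0$, and positive-definiteness forces $x_iv=0$ for all $i$; by the reformulation, $L(\ul\lambda)=\ul\lambda$.

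\emph{Part 2.} Here I drop unitarity and study the degree-$1$ form directly. Its Gram matrix is the $B_n$-equivariant operator $\mathcal B$ on $\mfh^\ast\otimes\ul\lambda$ with blocks $[y_i,x_j]|_{\ul\lambda}$, which by Schur's lemma acts by a scalar $\beta_{\ul\mu}$ on each isotypic component $\ul\mu$, so the form vanishes iff every $\beta_{\ul\mu}=0$. Tracing $\mathcal B$ over the $\mfh^\ast$-index recovers the central element $\sum_i[y_i,x_i]\in\mathbb{C}B_n$, acting on $\ul\lambda$ by $c_{\ul\lambda}$ (and on each $B_n$-irreducible $\ul\nu$ by $c_{\ul\nu}$); this already forces the necessary condition $c_{\ul\lambda}=0$, exactly as in Part 1. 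To isolate each $\beta_{\ul\mu}$ I would insert the explicit type-$B$ relations $[y_i,x_j]=\delta_{ij}-\sum_s(\text{reflection coeff})\,s$ from \cite{Griffeth}, write $\mathcal B=\mathrm{Id}-\mathcal R$, and compute the eigenvalue of $\mathcal R$ on $\ul\mu$ from the contents of the boxes by which $\ul\mu$ differs from $\ul\lambda$; comparing with the content formula for the $c$-function shows $\beta_{\ul\mu}=0$ precisely when $c_{\ul\mu}=c_{\ul\lambda}+1$. Together with $c_{\ul\lambda}=0$ this is the asserted criterion $c_{\ul\mu}=1$ for every constituent $\ul\mu$ of $\mfh^\ast\otimes\ul\lambda$.

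\emph{Main obstacle.} The two soft inputs, namely the reformulation via the radical and the $\mathfrak{sl}_2$/positivity argument producing $c_{\ul\lambda}=0$, are clean. The genuine work is the eigenvalue computation in Part 2: expressing $\beta_{\ul\mu}$ through the reflection part of $[y_i,x_j]$ and matching it against the content formula so that the vanishing condition collapses to the single-box shift $c_{\ul\mu}=c_{\ul\lambda}+1$. I would have to track the normalization of the $c$-function carefully, i.e.\ the constant relating the element $\sum_i(x_iy_i+y_ix_i)$ to the content formula, since it is exactly this constant that converts the relative condition $c_{\ul\mu}-c_{\ul\lambda}=1$ into the absolute values $c_{\ul\lambda}=0$ and $c_{\ul\mu}=1$ quoted in the statement.
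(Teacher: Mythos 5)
Your proposal is correct, and on the first equivalence it is essentially the argument the paper is invoking: the paper does not prove this lemma itself but quotes it from \cite{EtingofStoica}, and your Part 1 (positivity of the contravariant form against the $\mathfrak{sl}_2$-triple $\mathbf{e}=\tfrac12\sum_i x_i^2$, $\mathbf{f}=-\tfrac12\sum_i y_i^2$, $\mathbf{h}=[\mathbf{e},\mathbf{f}]$, giving $c_{\ul{\lambda}}\geq 0$ from unitarity and $c_{\ul{\lambda}}\leq 0$ from finite-dimensionality, then $\sum_i\|x_iv\|^2=0$) is precisely the proof of Proposition 4.1 there. Where you genuinely diverge is Part 2: you propose to compute the Schur scalars $\beta_{\ul{\mu}}$ by inserting the explicit type-$B$ commutation relations and tracking box contents, and you flag this as the main obstacle. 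That computation can be avoided entirely. Since $[\mathbf{h},x_j]=x_j$, the grading element $\mathbf{h}=\tfrac12\sum_i(x_iy_i+y_ix_i)$ acts on the degree-one part $\mathfrak{h}^\ast\otimes\ul{\lambda}$ of $\Delta(\ul{\lambda})$ by the scalar $1+c_{\ul{\lambda}}$, while $Z:=\mathbf{h}-\sum_i x_iy_i=\tfrac12\sum_i[y_i,x_i]$ lies in $\mathbb{C}B_n$, is central there (it is a class-constant combination of reflections), and acts on every irreducible $\ul{\nu}$ by $c_{\ul{\nu}}$; hence your Gram operator satisfies $\mathcal{B}=\sum_i x_iy_i\big|_{\mathfrak{h}^\ast\otimes\ul{\lambda}}=\mathbf{h}-Z$ and acts on the $\ul{\mu}$-isotypic component by $\beta_{\ul{\mu}}=1+c_{\ul{\lambda}}-c_{\ul{\mu}}$, after which Schur's lemma and your partial-trace observation finish the proof with no content bookkeeping. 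This soft identity also settles the normalization worry in your closing paragraph, and it is worth recording the resolution: the displayed content formula for $c_{\ul{\lambda}}$ in the paper computes the eigenvalue of the grading element $\mathbf{h}$, i.e.\ of $\tfrac12\sum_i(x_iy_i+y_ix_i)$, and with that normalization the shift in the lemma is $+1$ as stated; if one instead reads $c_{\ul{\lambda}}$ literally as the eigenvalue of $\sum_i(x_iy_i+y_ix_i)$, as the paper's prose says, your argument survives verbatim but the criterion becomes $c_{\ul{\mu}}=c_{\ul{\lambda}}+2$. So the only repair your plan needs is to fix this constant once, via the grading element, rather than by the deferred content computation.
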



The Fock space $\cF^2_{e,\ul{s}}$ comes endowed with the structure of an $\sle$-crystal \cite{JMMO} as well as an exotic structure of an $\slinf$-crystal \cite{ShanVasserot,Losev,Gerber1,Gerber2}, and the combinatorial formulas for both actions depend on $\ul{s}$ and $e$. A crystal is a certain directed graph with at most one arrow between any two vertices. The vertices of both the $\sle$- and the $\slinf$-crystal are $\{|\ul{\lambda}^t,\ul{s}\rangle\mid\ul{\lambda}\in\mathcal{P}^2\}$. We call $|\ul{\lambda}^t,\ul{s}\rangle$ a source vertex in a crystal if it has no incoming arrow. 
These crystals can be derived from a realization of $\cF^2_{e,\ul{s}}$ as the Grothendieck group of $$\cO_{e,\ul{s}}:=\bigoplus_{n\geq 0}\cO_{e,\ul{s}}(n)$$ where $\cO_{e,\ul{s}}(n)$ is the category $\cO$ of the rational Cherednik algebra of $B_n$ with parameters $(c,d)$ determined by the Fock space parameter $(e,\ul{s})$. The $\sle$-crystal on the Fock space arises via a categorical action of $\sle$ on $\cO_{e,\ul{s}}$; the Chevalley generators $f_i$ and $e_i$ act by $i$-induction and $i$-restriction functors, direct summands of the parabolic induction and restriction functors between $\cO_{e,\ul{s}}(n)$ and $\cO_{e,\ul{s}}(n+1)$ \cite{Shan}. Furthermore, there is a Heisenberg algebra action on $\cO_{e,\ul{s}}$ relevant for describing branching of irreducible representations with respect to 
categories $\cO$ attached to
 parabolic subgroups of the form 
$B_m\times S_e^{\times k}$ \cite{ShanVasserot}. The categorical Heisenberg action gives rise to an $\slinf$-crystal structure on $\cF^2_{e,\ul{s}}$ whose arrows add a certain collection of $e$ boxes at a time to a bipartition \cite{Losev}. This $\slinf$-crystal commutes with the $\sle$-crystal \cite{Gerber1}.

\begin{theorem}\cite{ShanVasserot}\label{source vx} The irreducible representation $L(\ul{\lambda})\in \cO_{e,\ul{s}}(n)$ is finite-dimensional if and only if $\ul{\lambda}^t$ is a source vertex for both the $\sle$-crystal and the $\slinf$-crystal on $\cF^2_{e,\ul{s}}$.
\end{theorem}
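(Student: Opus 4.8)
The plan is to recast finite-dimensionality as a support computation and then read the support off the two crystals. Since every object of $\cO_{e,\ul{s}}(n)$ is finitely generated over the polynomial subalgebra $\mathbb{C}[x_1,\dots,x_n]$, the module $L(\ul{\la})$ is finite-dimensional if and only if $\Supp L(\ul{\la})=\{0\}$. By Bezrukavnikov--Etingof and Losev \cite{BE,Losev}, $\Supp L(\ul\la)=B_n\mfh^{W'}$ for the parabolic $W'$ with $\mfh^{W'}$ minimal subject to $\Ores^{B_n}_{W'}L(\ul\la)\neq 0$ and finite-dimensional, and every such $W'$ is conjugate to $B_{n'}\times\prod_i S_{a_i}$. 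Finite-dimensionality of the $H_{1/e}(S_{a_i})$-factor of $\Ores^{B_n}_{W'}L(\ul\la)$ forces each $a_i\in\{1,e\}$, because for $a\geq 2$ the category $\cO_{1/e}(S_a)$ contains a nonzero finite-dimensional object precisely when $a=e$ (this is where the value $c=\tfrac1e$ enters). Hence $\Supp L(\ul\la)=\{0\}$ if and only if one cannot split off an $S_1$-block or an $S_e$-block, that is, $\Ores^{B_n}_{B_{n-1}}L(\ul\la)=0$ and the finite-dimensional-on-$S_e$ part of $\Ores^{B_n}_{B_{n-e}\times S_e}L(\ul\la)$ vanishes.

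Next I would match each vanishing condition to one crystal. For the $\sle$-crystal, recall from Shan \cite{Shan} that $\Ores^{B_n}_{B_{n-1}}=\bigoplus_{i\in\mathbb{Z}/e}\EE_i$, the decomposition into $i$-restriction functors of the categorical $\sle$-action on $\cO_{e,\ul{s}}$. Since this action categorifies the crystal on $\cF^2_{e,\ul{s}}$, for the simple $L(\ul\la)$ one has $\EE_i L(\ul\la)=0$ exactly when $\tilde{e}_i|\ul{\la}^t,\ul{s}\rangle=0$, the crystal operator being defined as the head of $\EE_i L(\ul\la)$. Therefore $\Ores^{B_n}_{B_{n-1}}L(\ul\la)=0$ if and only if $\tilde{e}_i|\ul{\la}^t,\ul{s}\rangle=0$ for all $i$, i.e. $\ul{\la}^t$ is a source vertex of the $\sle$-crystal.

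For the $\slinf$-crystal I would use the categorical Heisenberg action of Shan--Vasserot \cite{ShanVasserot}, whose lowering functors are built from $\Ores^{B_n}_{B_{n-e}\times S_e}$ followed by projection onto the unique finite-dimensional simple of $\cO_{1/e}(S_e)$ in the $S_e$-factor. Exactly as above, the associated $\slinf$-crystal operators are the heads (or socles) of these functors, so $\ul{\la}^t$ is a source vertex of the $\slinf$-crystal if and only if the finite-dimensional-on-$S_e$ part of $\Ores^{B_n}_{B_{n-e}\times S_e}L(\ul\la)$ vanishes. Combining the three paragraphs gives $\Supp L(\ul\la)=\{0\}$ if and only if $\ul\la^t$ is a source for both crystals, which is the assertion. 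Equivalently, one may simply quote Losev's depth formula recorded above: $\Supp L(\ul\la)=B_n\mfh^{W'}$ with $W'=B_{n'}\times S_e^{m}\times S_1^{p}$, where $m$ and $p$ are the depths of $\ul\la^t$ in the $\slinf$- and $\sle$-crystals; since $\dim\mfh^{W'}=m+p$, the support is $\{0\}$ iff $m=p=0$.

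I expect the main obstacle to be the two places where combinatorics meets categorical structure. The clean equivalence ``functor kills $L(\ul\la)$ $\iff$ crystal operator kills $\ul\la^t$'' requires the machinery of categorical $\sle$- and Heisenberg actions (Chuang--Rouquier \cite{ChuangRouquier}, Shan \cite{Shan}, Shan--Vasserot \cite{ShanVasserot}) identifying each crystal operator with the head/socle of the corresponding functor. The more serious point is justifying that iterating only these two restriction types detects all of $\Supp L(\ul\la)$, i.e. that no parabolic other than $B_{n'}\times S_e^{m}\times S_1^{p}$ occurs in a support; this rests on the $c=\tfrac1e$ analysis of finite-dimensional type $A$ modules (only $S_e$-blocks arise) and ultimately on Losev's support theorem \cite{Losev}. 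Once these inputs are in place, the proof is the short support computation above.
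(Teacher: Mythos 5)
The paper does not actually prove this theorem: it is imported from Shan--Vasserot, and the facts your argument runs on --- Bezrukavnikov--Etingof restriction theory, Shan's identification of $\Ores^{B_n}_{B_{n-1}}=\bigoplus_{i}\EE_i$ with the $\sle$-crystal, the Heisenberg functors of Shan--Vasserot, and Losev's formula that $\Supp L(\ul\la)=B_n\mathfrak{h}^{W'}$ with $W'=B_{n'}\times S_e^m\times S_1^p$ where $m$ and $p$ are the $\slinf$- and $\sle$-crystal depths of $\ul\la^t$ --- are exactly the results the paper itself recalls without proof in the introduction before Corollary \ref{Main3}. Your proposal is correct, and its closing argument ($\dim\mathfrak{h}^{W'}=m+p$, so the support is $\{0\}$ if and only if both depths vanish, i.e. if and only if $\ul\la^t$ is a source vertex in both crystals) is precisely the citation-level justification the paper intends, so yours is essentially the same approach.
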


\noindent In the next two sections, we recall the combinatorial rules for these crystals.

\subsection{The $\sle$-crystal} Let $\mu\in\cP^2$ be a bipartition. A box $b\in\ul\mu$ is said to be removable if $\ul\mu\setminus\{b\}\in\cP^2$. Similarly, an addable box of $\ul\mu$ is a box $a\notin\mu$ such that $\mu\cup\{a\}\in\cP^2$.

The generators $e_i$ and $f_i$ of $\sle$, $i\in\mathbb{Z}/e\mathbb{Z}$, act on $\cO_{e,\ul{s}}$ by biadjoint, exact functors $\EE_i$ and $\FF_i$ that are direct summands of parabolic induction and restriction functors \cite{Shan}. We will only use the functors $\EE_i$, which satisfy:
$$\Ores^{B_n}_{B_{n-1}}\simeq \bigoplus_{i\in\mathbb{Z}/e\mathbb{Z}}\EE_i.$$
 Applying $\EE_i$ to a standard module $\Delta(\ul\la)$, we get a $\Delta$-filtered module $\EE_i\Delta(\ul{\lambda})$ whose standard subquotients are all those $\Delta(\ul{\mu})$ where $|\ul{\mu}^t,\ul{s}\rangle\in\cF_{e,\ul{s}}^2$ is obtained from $|\ul{\lambda}^t,\ul{s}\rangle$ by removing a box of charged content $i\mod e$ (a ``removable $i$-box")\cite{Shan}: $$[\EE_i\Delta(\ul{\lambda})]=\sum_{|\ul{\mu}^t,\ul{s}\rangle=|\ul{\lambda}^t\setminus \{i-\mathrm{box}\},\ul{s}\rangle}[\Delta(\ul{\mu})].$$
If $|\ul{\lambda}^t,\ul{s}\rangle$ has no removable $i$-box then $\EE_i\Delta(\ul{\lambda})=0$. On irreducible modules, $\EE_i$ sends $L(\ul{\lambda})$ either to $0$ or to an indecomposable module $\EE_iL(\ul{\lambda})$ whose head and socle are simple and equal to $L((\tilde{e}_i(\ul{\lambda}^t)^t)$ \cite{Shan}. The charged bipartition $|\tilde{e}_i(\ul{\lambda}^t),\ul{s}\rangle$ is defined as $|\ul\la^t,\ul{s}\rangle$ minus a certain ``good removable $i$-box" when $\EE_iL(\ul\la)$ is nonzero, and $0$ otherwise. 

Let us recall the rule for finding the good removable $i$-box \cite{JMMO}. Order all of the removable and addable $i$-boxes in $|\ul\la^t,\ul{s}\rangle$ according to increasing order of charged content, with a box in component $1$ being considered as bigger than a box of the same charged content in component $2$. This produces a totally ordered list. Then recursively cancel all occurrences of a removable box immediately followed by an addable box in the list. The good removable $i$-box, if it exists, is then the smallest removable $i$-box remaining after no more cancellations are possible. Likewise, the good addable $i$-box is the largest addable $i$-box remaining.
The $\sle$-crystal is then defined to be the simple directed graph with vertices given by the charged bipartitions $\{|\ul\la^t,\ul{s}\rangle\mid\ul\la\in\cP^2\}$ and with an edge $|\ul\mu^t,\ul{s}\rangle\rightarrow|\ul\la^t,\ul{s}\rangle$ if and only if $\tilde{e}_i|\ul{\lambda}^t,\ul{s}\rangle=|\ul{\mu}^t,\ul{s}\rangle$ for some $i\in\mathbb{Z}/e\mathbb{Z}$. As is customary we drop the notation of $\ul{s}$ when it is understood and write $\tilde{e}_i(\ul\la^t)=\ul\mu^t$, but we emphasize that this also depends on $\ul{s}$. If $\tilde{e}_i(\ul\la^t)=\ul\mu^t$ then $\tilde{f}_i(\ul\mu^t)=\ul\la^t$ and $\tilde{f}_i$ adds the good addable $i$-box of $\ul\mu^t$.

\begin{example} Let $e=6$ and $\ul{s}=(0,1)$. Take $\ul{\lambda}=((2^3,1^2),(1^2))$, so $\ul{\lambda}^t=((5,3),(2))$. Then $\ul{\lambda}^t$ has just three removable boxes, and their charged contents mod $6$ are distinct. The removable and addable boxes, colored according to their charged contents mod $6$, are pictured below:
\begin{center}
\ytableausetup{mathmode,boxsize=1.4em}
\begin{ytableau}
0&1&2&3&*(Fuchsia)4&\none[\color{BrickRed}\bullet]\\
-1&0&*(Dandelion)1&\none[\color{Emerald}\bullet]\\
\none[\color{Fuchsia}\bullet]
\end{ytableau}
,\quad
\begin{ytableau}
1&*(Emerald)2&\none[\color{NavyBlue}\bullet]\\
\none[\color{RedOrange}\bullet]
\end{ytableau}
\end{center}
We have $\EE_{\color{Fuchsia}\mathbf{4}}\Delta(\ul{\lambda})=\Delta((2^3,1),(1^2))$ and $(\tilde{e}_{\color{Fuchsia}\mathbf{4}}(\ul{\lambda}^t))^t=((2^3,1),(1^2))$, $\EE_{\color{Emerald}\mathbf{2}}\Delta(\ul{\lambda})=\Delta((2^3,1^2),(1))$ and $\tilde{e}_{\color{Emerald}\mathbf{2}}(\ul{\lambda}^t)=0$ (the addable ${\color{Emerald}2}$-box in the second row of $(\lambda^1)^t$ cancels the removable ${\color{Emerald}2}$-box), $\EE_{\color{Dandelion}\mathbf{1}}\Delta(\ul{\lambda})=\Delta((2^2,1^3),(1^2))$ and $(\tilde{e}_{\color{Dandelion}\mathbf{1}}(\ul{\lambda}^t))^t=((2^2,1^3),(1^2))$.
\end{example}

\begin{example} Let $e=3$, $\ul{s}=(0,1)$, and $\ul\la^t=((2^2,1^2),(2,1^2))$. The removable and addable boxes of charged contents ${\color{CarnationPink}\mathbf{0}}$, ${\color{LimeGreen}\mathbf{1}}$, and ${\color{CornflowerBlue}\mathbf{2}}$ mod $3$ are indicated in the picture below. We have $\tilde{e}_{\color{CarnationPink}\mathbf{0}}(\ul\la^t)=((2^2,1)(2,1^2))$, $\tilde{e}_{\color{LimeGreen}\mathbf{1}}(\ul\la^t)=0$, and $\tilde{e}_{\color{CornflowerBlue}\mathbf{2}}(\ul\la^t)=0$.
\begin{center}
\begin{ytableau}
0&1&\none [\color{CornflowerBlue}\bullet]\\
-1&*(CarnationPink)0\\
-2&\none[\color{CornflowerBlue}\bullet]\\
*(CarnationPink)-3\\
\none[\color{CornflowerBlue}\bullet]
\end{ytableau}
,\quad
\begin{ytableau}
1&*(CornflowerBlue)2&\none[\color{CarnationPink}\bullet]\\
0&\none[\color{LimeGreen}\bullet]\\
*(CornflowerBlue)-1\\
\none[\color{LimeGreen}\bullet]
\end{ytableau}
\end{center}
\end{example}

We will need two easy lemmas.
\begin{lemma}\label{min cancellation pair} Let $|\ul\la^t,\ul{s}\rangle\in\cF^2_{e,\ul{s}}$.
\begin{enumerate} 
\item Suppose $b$ is a removable box of $(\la^1)^t$ and $a'$ is an addable box of $(\la^2)^t$ such that $\cct(a')=\cct(b)+e$. Let $i=\cct(b)\mod e$. Then $b$ is not a good removable $i$-box.
\item Suppose $b'$ is a removable box of $(\la^2)^t$ and $a$ is an addable box of $(\la^1)^t$ such that $\cct(a)=\cct(b')$. Let $i=\cct(b')\mod e$. Then $b'$ is not a good removable $i$-box.
\end{enumerate}
\end{lemma}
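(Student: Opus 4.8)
The plan is to reformulate the cancellation procedure recalled above as a bracket matching, and then to show that in each of the two cases the named addable box is the \emph{immediate successor} of the removable box in the ordered list of $i$-boxes; the two therefore cancel in the first round and the removable box cannot survive to be good. First I would fix $i\in\mathbb{Z}/e\mathbb{Z}$ and list all removable and addable $i$-boxes of $|\ul\la^t,\ul s\rangle$ in increasing order of charged content, breaking ties so that a component-$1$ box exceeds a component-$2$ box of equal charged content. Encoding each removable box as an open bracket and each addable box as a close bracket, the recursive cancellation of a removable box immediately followed by an addable box is exactly bracket matching, and the good removable $i$-box is the smallest unmatched open bracket. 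Thus, to prove that a given removable box is not good, it suffices to prove that it is matched, i.e.\ cancelled.

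The combinatorial input I would lean on is the standard fact that within a single component the removable and addable boxes all have pairwise distinct contents, so each diagonal contributes at most one removable-or-addable box to a given component. In case (1), $b$ lies in component $1$ with $\cct(b)=:c$, and $a'$ lies in component $2$ with $\cct(a')=c+e$. No $i$-box has charged content strictly between $c$ and $c+e$, since those intermediate values are not congruent to $i$ modulo $e$. At charged content $c$ the only $i$-box in component $1$ is $b$ itself, while any $i$-box of content $c$ in component $2$ is smaller than $b$ under the tie-break; and at charged content $c+e$ the box $a'$ in component $2$ precedes any component-$1$ box of the same content. Hence $a'$ is the immediate successor of $b$, the pair $(b,a')$ cancels, and $b$ is not a good removable $i$-box.

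Case (2) runs on the same principle, with the tie-break convention doing the essential work. Here $b'$ lies in component $2$ and $a$ in component $1$ with $\cct(b')=\cct(a)$; by the distinct-content fact each is the unique removable-or-addable $i$-box of that content in its respective component, and the convention that component-$1$ boxes exceed component-$2$ boxes of equal charged content places $a$ immediately after $b'$ in the ordered list. Thus the pair $(b',a)$ cancels and $b'$ is not good.

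I expect the only delicate point to be the bookkeeping at the one or two boundary charged contents, namely confirming that no other removable or addable $i$-box can intervene between $b$ and $a'$ (resp.\ between $b'$ and $a$). This is precisely where the distinct-content property and the tie-break convention are used; once these are in place, the confluence of bracket matching guarantees that an adjacent cancellable pair is genuinely removed in the final configuration, so the removable box does not remain and cannot be the smallest surviving one.
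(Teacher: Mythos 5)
Your proposal is correct and follows essentially the same argument as the paper: in both cases you show that the named addable box is the immediate successor of the removable box in the totally ordered list of $i$-boxes (using that contents of removable/addable boxes within one component are distinct, that $i$-boxes have charged contents differing by multiples of $e$, and the component tie-break), so the pair cancels and the removable box cannot be good. The bracket-matching/confluence language just makes explicit what the paper's proof leaves implicit.
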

\begin{proof} In the total order on addable and removable $i$-boxes in $|\ul\la^t,\ul{s}\rangle$, $b<a'$ and $b'<a$, and there cannot exist any $i$-box $f$ either addable or removable such that $b<f<a'$ or $b'<f<a$. 
\end{proof}

\begin{lemma}\label{easy i-res} Suppose that for any two removable boxes $b_1,b_2$ of $|\ul\la^t,\ul{s}\rangle$, $\cct(b_1)\neq \cct(b_2)\mod e$. Then $\Ores^n_{n-1}L(\ul\la)$ is semisimple or $0$.
\end{lemma}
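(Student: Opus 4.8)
The plan is to prove the statement by analyzing the structure of the functor $\Ores^n_{n-1}$ through its decomposition into the $i$-restriction summands $\EE_i$. Since the hypothesis says that no two removable boxes of $|\ul\la^t,\ul{s}\rangle$ share the same residue mod $e$, I would argue that the contributions of the various $\EE_i$ to $\Ores^n_{n-1}L(\ul\la)$ are ``independent'' and that each nonzero $\EE_i L(\ul\la)$ is in fact simple.

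First I would recall from the earlier discussion that $\Ores^n_{n-1}L(\ul\la)\simeq\bigoplus_{i\in\mathbb{Z}/e\mathbb{Z}}\EE_iL(\ul\la)$, so it suffices to show each summand $\EE_iL(\ul\la)$ is semisimple (indeed simple) or zero. The key observation is that the number of removable $i$-boxes of $|\ul\la^t,\ul{s}\rangle$ controls the complexity of $\EE_i L(\ul\la)$: on standard modules, $[\EE_i\Delta(\ul\la)]$ is a sum over removable $i$-boxes, so under the hypothesis that residues of removable boxes are pairwise distinct, $\EE_i\Delta(\ul\la)$ has at most one standard subquotient of the form $\Delta(\ul\mu)$ with $\ul\mu$ obtained by removing the single removable $i$-box.

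Next I would pass from standards to simples. Since $\EE_i$ is exact and $L(\ul\la)$ is a quotient of $\Delta(\ul\la)$, the module $\EE_iL(\ul\la)$ is a subquotient of $\EE_i\Delta(\ul\la)$, which (by the distinct-residue hypothesis) is $\Delta$-filtered with at most one standard layer. I would use the fact that $\EE_iL(\ul\la)$ is indecomposable with simple head and socle both equal to $L((\tilde e_i(\ul\la^t))^t)$, and then leverage that its composition multiplicities are bounded by those of a module with a single standard layer. The cleanest route is: the length of $\EE_iL(\ul\la)$ is bounded by the number of removable $i$-boxes (here at most one in terms of distinct residues), forcing $\EE_iL(\ul\la)$ to be either $0$ or simple; a module that is indecomposable with simple head and socle and has length at most one from the standard filtration constraint must be simple. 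Summing over $i$ then yields that $\Ores^n_{n-1}L(\ul\la)$ is a direct sum of simples, hence semisimple, or $0$.

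The main obstacle I anticipate is making rigorous the step that bounds the length of the \emph{irreducible} module $\EE_iL(\ul\la)$ using only information about $\EE_i\Delta(\ul\la)$. One must be careful: having a single standard layer in $\EE_i\Delta(\ul\la)$ does not literally force a single composition factor, because $\Delta(\ul\mu)$ itself may be non-simple. The correct argument uses that $\EE_i$ categorifies the crystal operator, so the \emph{multiplicity} of removable $i$-boxes is what governs whether $\EE_iL(\ul\la)$ can be decomposable: with exactly one removable $i$-box there is a unique crystal arrow, hence a unique simple $L(\tilde e_i(\ul\la^t)^t)$ can appear in the head, and the indecomposability together with biadjointness (self-duality) of $\EE_i$ forces socle $=$ head $=$ the whole module. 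I expect this compatibility between the crystal-theoretic and the highest-weight-categorical bookkeeping to be the delicate point, though it should follow formally from the properties of $\EE_i$ recorded above from \cite{Shan}.
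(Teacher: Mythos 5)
Your skeleton matches the paper's proof: decompose $\Ores^n_{n-1}\simeq\bigoplus_{i\in\mathbb{Z}/e\mathbb{Z}}\EE_i$, observe that the hypothesis gives at most one removable $i$-box for each $i$, so that $\EE_i\Delta(\ul\la)$ is either $0$ or the single standard module $\Delta(\ul\la\setminus b)$, and aim to show each $\EE_iL(\ul\la)$ is simple or $0$. But the final step --- the one you yourself flag as delicate --- does not go through in either of the two forms you offer. The claim that ``the length of $\EE_iL(\ul\la)$ is bounded by the number of removable $i$-boxes'' is not a valid principle, as you concede. Your replacement argument is also invalid: you argue that with one removable $i$-box there is a unique simple in the head, and that ``indecomposability together with biadjointness forces socle $=$ head $=$ the whole module.'' Every property invoked there --- simple head, simple socle, head isomorphic to socle, indecomposability, the self-duality coming from biadjointness of $\EE_i$ --- holds for $\EE_iL(\ul\la)$ \emph{unconditionally}, for any number of removable $i$-boxes, by the result of Shan quoted in the paper; yet $\EE_iL(\ul\la)$ is not simple in general. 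So these properties cannot force simplicity, and your argument never actually uses the hypothesis of the lemma at the decisive moment. (Concretely, a uniserial self-dual module with layers $L$, $M$, $L$ satisfies all the stated properties without being simple, and modules of exactly this shape do arise as $\EE_iL$ when several removable $i$-boxes are present.)

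What is missing is a multiplicity bound, and this is exactly how the paper closes the argument. When $b^t$ is the unique removable $i$-box and is good removable, applying the exact functor $\EE_i$ to $\Delta(\ul\la)\twoheadrightarrow L(\ul\la)$ gives a surjection $\Delta(\ul\la\setminus b)=\EE_i\Delta(\ul\la)\twoheadrightarrow\EE_iL(\ul\la)$, whence
\[
[\EE_iL(\ul\la):L(\ul\la\setminus b)]\;\leq\;[\Delta(\ul\la\setminus b):L(\ul\la\setminus b)]\;=\;1,
\]
the equality being part of the lowest weight structure of $\cO_{e,\ul{s}}(n-1)$. Now if $\EE_iL(\ul\la)$ were not simple, its simple head forces the radical to be the unique maximal submodule, and indecomposability forces the socle $L(\ul\la\setminus b)$ to lie inside that radical; then $L(\ul\la\setminus b)$ would occur at least twice as a composition factor (once in the head, once in the radical), contradicting the bound. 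This is precisely the point where the single-removable-$i$-box hypothesis enters: it makes $\EE_i\Delta(\ul\la)$ a single standard module rather than merely a $\Delta$-filtered one, so that $\EE_iL(\ul\la)$ is a quotient of one standard and inherits its multiplicity-one property. (The remaining case, where the unique removable $i$-box is not good, gives $\tilde{e}_i(\ul\la^t)=0$ and hence $\EE_iL(\ul\la)=0$, as in the paper.)
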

\begin{proof} We have $\Ores^n_{n-1}=\bigoplus\limits_{i\in\mathbb{Z}/e\mathbb{Z}}\EE_i$, so it suffices to show that $\EE_iL(\ul\la)$ is either irreducible or $0$ for every $i\in\mathbb{Z}/e\mathbb{Z}$.

By assumption, $|\ul\la^t,\ul{s}\rangle$ has at most one removable $i$-box for any $i\in\mathbb{Z}/e\mathbb{Z}$. If $|\ul\la^t,\ul{s}\rangle$ has no removable $i$-box then $\EE_i$ acts by $0$ on $\Delta(\ul\la)$ and $L(\ul\la)$. So suppose that $|\ul\la^t,\ul{s}\rangle$ has a removable $i$-box $b^t$, corresponding under the transpose map to the box $b\in\ul\la$. Since $b^t$ is the only removable $i$-box, $\EE_i\Delta(\ul\la)=\Delta(\ul\la\setminus b)$. If $b^t$ is not good removable then $\tilde{e}_i(\ul\la^t)=0$, so $\EE_iL(\ul\la)=0$ and there is nothing to show. If $b^t$ is good removable, then $(\tilde{e}_i(\ul\la^t)^t)=\ul\la\setminus b$. By exactness of $\EE_i$ there is a surjective homomorphism $\EE_i\Delta(\ul\la)=\Delta(\ul\la\setminus b)\twoheadrightarrow \EE_i L(\ul\la)$. Pre-composing this with the surjection $\EE_iL(\ul\la)\twoheadrightarrow L(\tilde{e}_i(\ul\la^t)^t)=L(\ul\la\setminus b)$, we obtain a surjection $\Delta(\ul\la\setminus b)\twoheadrightarrow L(\ul\la\setminus b)$ factoring through $\EE_iL(\ul\la)$. 
Since $\Delta(\ul\la\setminus b)$ contains $L(\ul\la\setminus b)$ exactly once as a composition factor, $\EE_i L(\ul\la)$ also contains $L(\ul\la\setminus b)$ exactly once as a composition factor. But the head of $\EE_iL(\ul\la)$ is isomorphic to its socle, and $L(\ul\la\setminus b)$ is the head of $\EE_iL(\ul\la)$. It follows that $\EE_iL(\ul\la)$ is irreducible and equal to  $L(\ul\la\setminus b)$.
\end{proof}

\subsection{The $\slinf$-crystal} A direct combinatorial rule for computing the arrows in the $\slinf$-crystal was given in \cite{GerberN} and builds off of previous results by Gerber \cite{Gerber1},\cite{Gerber2} and Losev \cite{Losev}. The rule uses abacus combinatorics. We define the abacus $\cA(\ul{\lambda})$ via the $\beta$-numbers of the charged bipartition $|((\lambda^1)^t,(\lambda^2)^t),\ul{s}\rangle$ as $\cA(\ul{\lambda}):=\left(\{\beta^1_1,\beta^1_2,\beta^1_3,\dots\},\{\beta^2_1,\beta^2_2,\beta^2_3,\dots\}\right)$, where $\beta^j_i:=(\lambda^j)^t_i+s_j-i+1$ if $i\leq r$ where $r$ is the number of parts of $(\lambda^j)^t$, i.e. the number of columns of $\lambda^j$, and $\beta^j_i=s_j-i+1$ if $i>r$.\footnote{Here as in Convention \ref{transpose convention}, we have taken the transposes of $\lambda^1$ and $\lambda^2$ because the convention of \cite{Griffeth} and \cite{EtingofStoica} for labeling irreducible representations by bipartitions is the component-wise transpose of the convention in \cite{Gerber1},\cite{Gerber2},\cite{GerberN} for the level $2$ Fock space.} It is convenient to visualize $\cA(\ul{\lambda})$ as an array of beads and spaces assembled in two horizontal rows and infinitely many columns indexed by $\mathbb{Z}$; the bottom row corresponds to $\lambda^1$ and the top row to $\lambda^2$, with a bead in row $j$ and column $\beta^j_i$ for each $i\in\mathbb{N}$ and each $j=1,2$, and spaces otherwise. Far to the left the abacus consists only of beads, far to the right only of spaces, so computations always occur in a finite region. The beads in $\cA(\ul{\lambda})$ which have some space to their left correspond to the nonzero columns of $\ul{\lambda}$. We emphasize that the abacus $\cA(\ul\la)$ is really the same thing as the charged bipartition $|\ul\la^t,\ul{s}\rangle$ -- the former is just a visual way of working with the latter. Because we have a running convention of taking the transpose when going from $\cP^2$ to $\cF_{e,\ul{s}}$, we will try to use the phrase and notation ``the abacus $\cA(\ul\la)$" for $\ul\la\in\cP^2$ labeling $L(\ul\la)\in\cO_{e,\ul{s}}$, and to use the phrase ``the abacus associated to $|\ul\la^t,\ul{s}\rangle$" for the same abacus, but now defined with reference to a charged bipartition in $\cF_{e,\ul{s}}$.

\begin{example} Let $\ul{\lambda}=((5,4,2,2),(4,3,2,2))$, $\ul{s}=(0,3)$. Then $\ul{\lambda}^t=((4,4,2,2,1),(4,4,2,1)))$ and we have $\cA(\ul{\lambda})=\left(\{4,3,0,-1,-3,-5,-6,-7,\ldots\},\{7,6,3,1,-1,-2,-3,\ldots\}\right)$, visualized as the following abacus diagram (the beads are understood to extend infinitely to the left, the spaces infinitely to the right):
\[
\TikZ{[scale=.6]
\draw
(9,2)node[fill,circle,inner sep=.5pt]{}
(9,1)node[fill,circle,inner sep=.5pt]{}
(8,2)node[fill,circle,inner sep=.5pt]{}
(8,1)node[fill,circle,inner sep=.5pt]{}
(7,2)node[fill,circle,inner sep=3pt]{}
(7,1)node[fill,circle,inner sep=.5pt]{}
(6,2)node[fill,circle,inner sep=3pt]{}
(6,1)node[fill,circle,inner sep=.5pt]{}
(5,2)node[fill,circle,inner sep=.5pt]{}
(5,1)node[fill,circle,inner sep=.5pt]{}
(4,2)node[fill,circle,inner sep=.5pt]{}
(4,1)node[fill,circle,inner sep=3pt]{}
(3,2)node[fill,circle,inner sep=3pt]{}
(3,1)node[fill,circle,inner sep=3pt]{}
(2,2)node[fill,circle,inner sep=.5pt]{}
(2,1)node[fill,circle,inner sep=.5pt]{}
(1,2)node[fill,circle,inner sep=3pt]{}
(1,1)node[fill,circle,inner sep=.5pt]{}
(0,2)node[fill,circle,inner sep=.5pt]{}
(0,1)node[fill,circle,inner sep=3pt]{}
(-1,2)node[fill,circle,inner sep=3pt]{}
(-1,1)node[fill,circle,inner sep=3pt]{}
(-2,2)node[fill,circle,inner sep=3pt]{}
(-2,1)node[fill,circle,inner sep=.5pt]{}
(-3,2)node[fill,circle,inner sep=3pt]{}
(-3,1)node[fill,circle,inner sep=3pt]{}
(-4,2)node[fill,circle,inner sep=3pt]{}
(-4,1)node[fill,circle,inner sep=.5pt]{}
(-5,2)node[fill,circle,inner sep=3pt]{}
(-5,1)node[fill,circle,inner sep=3pt]{}
(-6,2)node[fill,circle,inner sep=3pt]{}
(-6,1)node[fill,circle,inner sep=3pt]{}
(9,0)node[]{9}
(8,0)node[]{8}
(7,0)node[]{7}
(6,0)node[]{6}
(5,0)node[]{5}
(4,0)node[]{4}
(3,0)node[]{3}
(2,0)node[]{2}
(1,0)node[]{1}
(0,0)node[]{0}
(-1,0)node[]{-1}
(-2,0)node[]{-2}
(-3,0)node[]{-3}
(-4,0)node[]{-4}
(-5,0)node[]{-5}
(-6,0)node[]{-6}
;
}
\]
\end{example}
 
 Following \cite[Definition 2.2]{JaconLecouvey}, given $\cA(\ul{\lambda})$ the abacus associated to $|\ul{\lambda}^t,\ul{s}\rangle$, we say that $\cA(\ul{\lambda})$ has an $e$-period if there exist $\beta^{j_1}_{i_1},\beta^{j_2}_{i_2},\dots,\beta^{j_e}_{i_e}\in\cA(\ul{\lambda})$ such that $j_1\geq j_2\geq\dots\geq j_e$, $\beta^{j_{m+1}}_{i_{m+1}}=\beta^{j_m}_{i_m}-1$ for each $m=1,\dots,e-1$, $\beta^{j_1}_{i_1}$ is in the rightmost column of $\cA(\lambda)$ containing a bead, and for each $m=1,\dots,e$ if $j_m=2$ then $\beta^2_{i_m}$ has an empty space below it. Let $\mathrm{Per}^1=\{\beta^{j_1}_{i_1},\beta^{j_2}_{i_2},\dots,\beta^{j_e}_{i_e}\}\subset\cA(\ul{\lambda})$ denote the $e$-period of $\cA(\ul{\lambda})$ if it exists, and call it the first $e$-period. The $k$'th $e$-period $\mathrm{Per}^k$ of $\cA(\ul{\lambda})$ is defined recursively as the $e$-period of $\left(\dots\left(\cA({\lambda})\setminus \mathrm{Per^1}\right)\setminus\mathrm{Per^2}\dots\right)\setminus\mathrm{Per}^{k-1}$ if it exists and $\mathrm{Per}^1,\dots,\mathrm{Per}^{k-1}$ exist. We
say that $\cA(\ul{\lambda})$ is totally $e$-periodic if $\mathrm{Per}^k$ exists for any $k\in\mathbb{N}$ (see \cite[Definition 5.4]{JaconLecouvey}). This is equivalent to requiring that after replacing all beads with spaces in the first $k$ periods for some $k\in\mathbb{N}$, we are left with the abacus of $(\emptyset,\emptyset)$ for some charge $\ul{s'}$ \cite[Lemma 5.3]{JaconLecouvey}.

\begin{example}\label{exl:totper} Let $\ul{\lambda}=((6,6,6,6),\emptyset)$, $\ul{s}=(0,3)$, and $e=5$. Thus $\ul{\lambda}^t=((4,4,4,4,4,4),\emptyset)$ and the abacus $\cA(\ul{\lambda})$ is drawn below with its first five $5$-periods. We see that $\cA(\ul{\lambda})$ is totally $5$-periodic:
\[
\TikZ{[scale=.6]
\draw
(6,2)node[fill,circle,inner sep=.5pt]{}
(6,1)node[fill,circle,inner sep=.5pt]{}
(5,2)node[fill,circle,inner sep=.5pt]{}
(5,1)node[fill,circle,inner sep=.5pt]{}
(4,2)node[fill,circle,inner sep=.5pt]{}
(4,1)node[fill,circle,inner sep=3pt]{}
(3,2)node[fill,circle,inner sep=3pt]{}
(3,1)node[fill,circle,inner sep=3pt]{}
(2,2)node[fill,circle,inner sep=3pt]{}
(2,1)node[fill,circle,inner sep=3pt]{}
(1,2)node[fill,circle,inner sep=3pt]{}
(1,1)node[fill,circle,inner sep=3pt]{}
(0,2)node[fill,circle,inner sep=3pt]{}
(0,1)node[fill,circle,inner sep=3pt]{}
(-1,2)node[fill,circle,inner sep=3pt]{}
(-1,1)node[fill,circle,inner sep=3pt]{}
(-2,2)node[fill,circle,inner sep=3pt]{}
(-2,1)node[fill,circle,inner sep=.5pt]{}
(-3,2)node[fill,circle,inner sep=3pt]{}
(-3,1)node[fill,circle,inner sep=.5pt]{}
(-4,2)node[fill,circle,inner sep=3pt]{}
(-4,1)node[fill,circle,inner sep=.5pt]{}
(-5,2)node[fill,circle,inner sep=3pt]{}
(-5,1)node[fill,circle,inner sep=.5pt]{}
(-6,2)node[fill,circle,inner sep=3pt]{}
(-6,1)node[fill,circle,inner sep=3pt]{}
(-7,2)node[fill,circle,inner sep=3pt]{}
(-7,1)node[fill,circle,inner sep=3pt]{}
(-8,2)node[fill,circle,inner sep=3pt]{}
(-8,1)node[fill,circle,inner sep=3pt]{}
(-9,2)node[fill,circle,inner sep=3pt]{}
(-9,1)node[fill,circle,inner sep=3pt]{}
(-10,2)node[fill,circle,inner sep=3pt]{}
(-10,1)node[fill,circle,inner sep=3pt]{}
(6,0)node[]{6}
(5,0)node[]{5}
(4,0)node[]{4}
(3,0)node[]{3}
(2,0)node[]{2}
(1,0)node[]{1}
(0,0)node[]{0}
(-1,0)node[]{-1}
(-2,0)node[]{-2}
(-3,0)node[]{-3}
(-4,0)node[]{-4}
(-5,0)node[]{-5}
(-6,0)node[]{-6}
(-7,0)node[]{-7}
(-8,0)node[]{-8}
(-9,0)node[]{-9}
(-10,0)node[]{-10}
;
\draw[very thick] plot [smooth,tension=.1]
coordinates{(4,1)(0,1)};
\draw[very thick] plot [smooth,tension=.1]
coordinates{(3,2)(0,2)(-1,1)};
\draw[very thick] plot [smooth,tension=.1]
coordinates{(-1,2)(-5,2)};
\draw[very thick] plot [smooth,tension=.1]
coordinates{(-6,2)(-10,2)};
\draw[very thick] plot [smooth,tension=.1]
coordinates{(-6,1)(-10,1)};
}
\]
\end{example}

The charged bipartition $|\ul{\lambda}^t,\ul{s}\rangle$ is a source vertex for the $\sle$-crystal on $\mathcal{F}^2_{e,\ul{s}}$ if and only if $\cA(\ul{\lambda})$ is totally $e$-periodic \cite[Theorem 5.9]{JaconLecouvey}. 
In \cite{GerberN} we gave a criterion for checking if $|\ul{\lambda}^t,\ul{s}\rangle$ is a source vertex in the $\slinf$-crystal in terms of a pattern avoidance condition on $\cA(\ul{\lambda})$ \cite[Theorem 7.13]{GerberN}. In particular, any ``log jam" as in Example \ref{exl:totper} where no $e$-period can ``physically move to the left" is always a source vertex in the $\slinf$-crystal.

The rule for edges in the $\slinf$-crystal in a totally $e$-periodic abacus is in terms of shifting $e$-periods to the left and right. In the case that an abacus $\cA$ is not totally $e$-periodic, the rule is more complicated and we refer the reader to \cite{GerberN} for the general case. We now explain the rule in the totally $e$-periodic case. First, we give examples of shifting $e$-periods to the left and right in an abacus.
\begin{example}
Let $\cA$ be the abacus in Example \ref{exl:totper}. The only $e$-period that can be shifted to the right is $\mathrm{Per}^1$. Doing so, we obtain the following abacus:
\[
\TikZ{[scale=.6]
\draw
(6,2)node[fill,circle,inner sep=.5pt]{}
(6,1)node[fill,circle,inner sep=.5pt]{}
(5,2)node[fill,circle,inner sep=.5pt]{}
(0,1)node[fill,circle,inner sep=.5pt]{}
(4,2)node[fill,circle,inner sep=.5pt]{}
(4,1)node[fill,circle,inner sep=3pt]{}
(3,2)node[fill,circle,inner sep=3pt]{}
(3,1)node[fill,circle,inner sep=3pt]{}
(2,2)node[fill,circle,inner sep=3pt]{}
(2,1)node[fill,circle,inner sep=3pt]{}
(1,2)node[fill,circle,inner sep=3pt]{}
(1,1)node[fill,circle,inner sep=3pt]{}
(0,2)node[fill,circle,inner sep=3pt]{}
(5,1)node[fill,circle,inner sep=3pt]{}
(-1,2)node[fill,circle,inner sep=3pt]{}
(-1,1)node[fill,circle,inner sep=3pt]{}
(-2,2)node[fill,circle,inner sep=3pt]{}
(-2,1)node[fill,circle,inner sep=.5pt]{}
(-3,2)node[fill,circle,inner sep=3pt]{}
(-3,1)node[fill,circle,inner sep=.5pt]{}
(-4,2)node[fill,circle,inner sep=3pt]{}
(-4,1)node[fill,circle,inner sep=.5pt]{}
(-5,2)node[fill,circle,inner sep=3pt]{}
(-5,1)node[fill,circle,inner sep=.5pt]{}
(-6,2)node[fill,circle,inner sep=3pt]{}
(-6,1)node[fill,circle,inner sep=3pt]{}
(-7,2)node[fill,circle,inner sep=3pt]{}
(-7,1)node[fill,circle,inner sep=3pt]{}
(-8,2)node[fill,circle,inner sep=3pt]{}
(-8,1)node[fill,circle,inner sep=3pt]{}
(-9,2)node[fill,circle,inner sep=3pt]{}
(-9,1)node[fill,circle,inner sep=3pt]{}
(-10,2)node[fill,circle,inner sep=3pt]{}
(-10,1)node[fill,circle,inner sep=3pt]{}
(6,0)node[]{6}
(5,0)node[]{5}
(4,0)node[]{4}
(3,0)node[]{3}
(2,0)node[]{2}
(1,0)node[]{1}
(0,0)node[]{0}
(-1,0)node[]{-1}
(-2,0)node[]{-2}
(-3,0)node[]{-3}
(-4,0)node[]{-4}
(-5,0)node[]{-5}
(-6,0)node[]{-6}
(-7,0)node[]{-7}
(-8,0)node[]{-8}
(-9,0)node[]{-9}
(-10,0)node[]{-10}
;
\draw[very thick,Orange] plot [smooth,tension=.1]
coordinates{(5,1)(1,1)};
\draw[very thick] plot [smooth,tension=.1]
coordinates{(3,2)(0,2)(-1,1)};
\draw[very thick] plot [smooth,tension=.1]
coordinates{(-1,2)(-5,2)};
\draw[very thick] plot [smooth,tension=.1]
coordinates{(-6,2)(-10,2)};
\draw[very thick] plot [smooth,tension=.1]
coordinates{(-6,1)(-10,1)};
}
\]
\end{example}

\begin{example}\label{leftshiftexl} Let $e=3$, $\ul{s}=(0,2)$, and $\ul{\lambda}^t=((3^3,2),(3^3,2^2))$. We shift $\mathrm{Per}^3$ to the left as follows:
\[
\TikZ{[scale=.6]
\draw
(5,2)node[fill,circle,inner sep=3pt]{}
(5,1)node[fill,circle,inner sep=.5pt]{}
(4,2)node[fill,circle,inner sep=3pt]{}
(4,1)node[fill,circle,inner sep=.5pt]{}
(3,2)node[fill,circle,inner sep=3pt]{}
(3,1)node[fill,circle,inner sep=3pt]{}
(2,2)node[fill,circle,inner sep=.5pt]{}
(2,1)node[fill,circle,inner sep=3pt]{}
(1,2)node[fill,circle,inner sep=3pt]{}
(1,1)node[fill,circle,inner sep=3pt]{}
(0,2)node[fill,circle,inner sep=3pt]{}
(0,1)node[fill,circle,inner sep=.5pt]{}
(-1,2)node[fill,circle,inner sep=.5pt]{}
(-1,1)node[fill,circle,inner sep=3pt]{}
(-2,2)node[fill,circle,inner sep=.5pt]{}
(-2,1)node[fill,circle,inner sep=.5pt]{}
(-3,2)node[fill,circle,inner sep=3pt]{}
(-3,1)node[fill,circle,inner sep=.5pt]{}
(-4,2)node[fill,circle,inner sep=3pt]{}
(-4,1)node[fill,circle,inner sep=3pt]{}
(-5,2)node[fill,circle,inner sep=3pt]{}
(-5,1)node[fill,circle,inner sep=3pt]{}
(5,0)node[]{5}
(4,0)node[]{4}
(3,0)node[]{3}
(2,0)node[]{2}
(1,0)node[]{1}
(0,0)node[]{0}
(-1,0)node[]{-1}
(-2,0)node[]{-2}
(-3,0)node[]{-3}
(-4,0)node[]{-4}
(-5,0)node[]{-5}
;
\draw[very thick] plot [smooth,tension=.1]
coordinates{(5,2)(4,2)(3,1)};
\draw[very thick] plot [smooth,tension=.1]
coordinates{(3,2)(2,1)(1,1)};
\draw[very thick,Orange] plot [smooth,tension=.1]
coordinates{(1,2)(0,2)(-1,1)};
\draw[very thick] plot [smooth,tension=.1]
coordinates{(-3,2)(-4,1)(-5,1)};
\draw[very thick] plot [smooth,tension=.1]
coordinates{(-4,2)(-5,2)(-5.5,1.5)};
}
\qquad\rightsquigarrow\qquad
\TikZ{[scale=.6]
\draw
(5,2)node[fill,circle,inner sep=3pt]{}
(5,1)node[fill,circle,inner sep=.5pt]{}
(4,2)node[fill,circle,inner sep=3pt]{}
(4,1)node[fill,circle,inner sep=.5pt]{}
(3,2)node[fill,circle,inner sep=3pt]{}
(3,1)node[fill,circle,inner sep=3pt]{}
(2,2)node[fill,circle,inner sep=.5pt]{}
(2,1)node[fill,circle,inner sep=3pt]{}
(1,2)node[fill,circle,inner sep=.5pt]{}
(1,1)node[fill,circle,inner sep=3pt]{}
(0,2)node[fill,circle,inner sep=3pt]{}
(0,1)node[fill,circle,inner sep=.5pt]{}
(-1,2)node[fill,circle,inner sep=3pt]{}
(-1,1)node[fill,circle,inner sep=.5pt]{}
(-2,2)node[fill,circle,inner sep=.5pt]{}
(-2,1)node[fill,circle,inner sep=3pt]{}
(-3,2)node[fill,circle,inner sep=3pt]{}
(-3,1)node[fill,circle,inner sep=.5pt]{}
(-4,2)node[fill,circle,inner sep=3pt]{}
(-4,1)node[fill,circle,inner sep=3pt]{}
(-5,2)node[fill,circle,inner sep=3pt]{}
(-5,1)node[fill,circle,inner sep=3pt]{}
(5,0)node[]{5}
(4,0)node[]{4}
(3,0)node[]{3}
(2,0)node[]{2}
(1,0)node[]{1}
(0,0)node[]{0}
(-1,0)node[]{-1}
(-2,0)node[]{-2}
(-3,0)node[]{-3}
(-4,0)node[]{-4}
(-5,0)node[]{-5}
;
\draw[very thick] plot [smooth,tension=.1]
coordinates{(5,2)(4,2)(3,1)};
\draw[very thick] plot [smooth,tension=.1]
coordinates{(3,2)(2,1)(1,1)};
\draw[very thick,Orange] plot [smooth,tension=.1]
coordinates{(0,2)(-1,2)(-2,1)};
\draw[very thick] plot [smooth,tension=.1]
coordinates{(-3,2)(-4,1)(-5,1)};
\draw[very thick] plot [smooth,tension=.1]
coordinates{(-4,2)(-5,2)(-5.5,1.5)};
}
\]
\end{example}

\begin{theorem}\cite{GerberN} Let $\cA$ be a totally $e$-periodic abacus and let $\mathrm{Per}^k$, $k\geq 1$, be the $e$-periods of $\cA$. Suppose $\mathrm{Per}^k$ can be shifted to the right for some $k$ and let $\cA'$ be the abacus obtained from $\cA$ by shifting $\mathrm{Per}^k$ to the right. Then there is an edge from $\cA$ to $\cA'$ in the $\slinf$-crystal if and only if the right shift of $\mathrm{Per}^k$ is the $k$'th $e$-period of $\cA'$. All edges in the $\slinf$-crystal on the totally $e$-periodic abaci arise in this way. 
\end{theorem}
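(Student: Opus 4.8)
The plan is to read the period-shifting description not as the definition of the $\slinf$-crystal but as a \emph{computation} of the Kashiwara operators of an independently defined $\slinf$-crystal (the one coming from the categorical Heisenberg action of Shan--Vasserot and Losev), and to verify that computation by reducing the abacus bookkeeping to a standard crystal model. Every connected $\slinf$-crystal of the relevant type is a highest-weight crystal $B(\Lambda)$, determined up to isomorphism by its unique source vertex together with the action of $\tilde{e}_j,\tilde{f}_j$ ($j\in\mathbb{Z}$); the source vertices are already pinned down by the pattern-avoidance criterion of \cite{GerberN}, so only the arrows remain to be identified. A first observation, which explains why the statement makes sense within the totally $e$-periodic abaci at all, is that the $\slinf$-crystal commutes with the $\sle$-crystal \cite{Gerber1}: since totally $e$-periodicity is exactly the condition of being an $\sle$-source, and $\tilde{e}^{\sle}_i\tilde{f}^{\slinf}_j=\tilde{f}^{\slinf}_j\tilde{e}^{\sle}_i$, applying an $\slinf$-operator to a totally $e$-periodic abacus keeps it totally $e$-periodic. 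Thus the $\slinf$-crystal restricts to a genuine subgraph on these abaci.

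Next I would encode a totally $e$-periodic abacus $\cA$ by the purely combinatorial data of its period decomposition: to each $e$-period $\mathrm{Per}^k$ attach its \emph{shape} (how the $e$ beads of the staircase split between the two runners) together with its \emph{position}, the $\beta$-value of its rightmost bead. Total $e$-periodicity guarantees this list is well defined, eventually constant, and reconstructs $\cA$. The right-shift of $\mathrm{Per}^k$ leaves all shapes fixed and increments a single position by one, adding exactly one $e$-strip of boxes; this matches the weight change of a single $\slinf$-arrow, so any period right-shift is at least a candidate for a crystal edge. Conversely, because one $\slinf$-edge changes the weight by precisely one $e$-strip, and because commutation with the $\sle$-crystal forces the added boxes to be a single period's worth, every edge between totally $e$-periodic abaci must be realized by shifting one period to the right. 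This reduces the theorem to deciding \emph{which} right-shifts are edges.

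The heart of the matter is then to show that the admissibility condition ``the right shift of $\mathrm{Per}^k$ is again the $k$'th $e$-period of $\cA'$'' is exactly the signature/bracketing rule governing $\tilde{f}_j$ on the position-sequence model. The subtlety is that incrementing one position may destroy the nesting that makes the greedy right-to-left period decomposition reproduce the same staircases: recomputing the periods of $\cA'$ can reorder the shifted staircase with a neighbouring one, and this is precisely when the shift fails to be a crystal edge. Recasting the greedy recomputation as a condition on the positions recovers the familiar ``cancel a removable box immediately followed by an addable box'' bracketing, now in the $\slinf$ setting, so that the surviving shifts are in bijection with the $\tilde{f}_j$-arrows. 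Checking this amounts to verifying the crystal axioms (that $\tilde{e}_j$ and $\tilde{f}_j$ so defined are mutual partial inverses, with the correct $\varepsilon_j,\varphi_j$) on the position model.

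The step I expect to be the main obstacle is the last matching: proving that the combinatorially defined period shift computes the operators of the crystal arising from the categorical Heisenberg action, and not merely \emph{some} crystal with the same source vertices. Two connected $\slinf$-crystals with the same highest weight are abstractly isomorphic, but the isomorphism need not be the identity on abaci, so compatibility must be established vertex by vertex. I would propagate outward from the source vertices, computing $\tilde{f}_j$ both algebraically (via the Heisenberg branching rules of \cite{Losev}) and combinatorially (period shift) and checking agreement, using commutation with the $\sle$-crystal to rigidify the labelling of vertices reached from a given source. The genuine combinatorial labor lies in controlling how periods interact under repeated shifts --- showing the greedy period decomposition is stable enough that the local shift rule remains consistent globally --- which is exactly the content hidden in the clause that the shifted period must remain the $k$'th period of $\cA'$.
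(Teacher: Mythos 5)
You should first be aware that the paper itself does not prove this statement: it is quoted verbatim from \cite{GerberN}, so there is no internal argument to compare against, and your proposal has to stand entirely on its own. Judged that way, it is a research plan rather than a proof, and it has two genuine gaps. The first is the ``all edges arise this way'' direction. You argue that an $\slinf$-edge adds one vertical $e$-strip and that ``commutation with the $\sle$-crystal forces the added boxes to be a single period's worth.'' Commutation with the $\sle$-crystal (from \cite{Gerber1}, together with the Jacon--Lecouvey characterization of $\sle$-sources) does give that the target of an edge is again totally $e$-periodic, but it does not identify the added strip with one of the greedy periods $\mathrm{Per}^k$ of the source abacus. That identification is a nontrivial combinatorial claim about how the greedy right-to-left period decomposition interacts with adding a strip, and no argument for it is given. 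Likewise, your assertion that the admissibility clause (``the shifted period is again the $k$'th period of $\cA'$'') ``recovers the familiar cancel-a-removable-box bracketing'' is stated, not proved; it is not even a priori clear that this crystal is governed by an $i$-signature rule at all, since in \cite{Losev} and \cite{GerberN} it is controlled by the operators $\tilde{a}_\sigma$ indexed by partitions, not by a Kashiwara-type bracketing on a fixed index.

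The second and decisive gap is the one you yourself flag as ``the main obstacle'': showing that the period-shift rule computes the crystal coming from the categorical Heisenberg action of \cite{ShanVasserot} and \cite{Losev}, rather than merely \emph{some} crystal with the same source vertices. You correctly observe that two crystal structures on the same vertex set with the same sources can have abstractly isomorphic components without the identity map being an isomorphism, so a vertex-by-vertex comparison is required. But the proposal then only records the intention to ``propagate outward from the source vertices, computing $\tilde{f}_j$ both algebraically \dots and combinatorially \dots and checking agreement,'' without supplying the content of that induction: what Losev's branching data actually says at an arbitrary vertex, what invariant permits the comparison, and why the greedy period decomposition is stable under repeated shifts so that the local rule stays consistent globally. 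Since the theorem \emph{is} precisely the assertion that these two computations agree, deferring this step defers the entire theorem; what you have written reformulates the statement and isolates its difficulty, which is useful orientation, but it does not prove it.
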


\begin{corollary}
Shifting the $e$-period $\mathrm{Per}^k$ to the left in a totally $e$-periodic abacus $\cA$ to obtain a new abacus $\cA'$ describes an edge in the $\slinf$-crystal if and only if the left shift of $\mathrm{Per}^k$ is the $k$'th $e$-period of $\cA'$.
\end{corollary}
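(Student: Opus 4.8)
The plan is to deduce the left-shift statement directly from the Theorem by exploiting the fact that shifting a fixed set of $e$ beads to the left and shifting it back to the right are mutually inverse operations; swapping the roles of $\cA$ and $\cA'$ then converts the left shift into a right shift to which the Theorem applies. Since shifting $\mathrm{Per}^k$ to the left decreases the parts (it removes boxes), the abacus $\cA'$ has fewer boxes than $\cA$, so any edge of the $\slinf$-crystal joining them must point $\cA'\to\cA$ in the box-adding direction. I would therefore apply the Theorem to $\cA'$ together with the right shift that recovers $\cA$.

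First I would record the bookkeeping of period indices. Write $\mathrm{Per}^k=\mathrm{Per}^k(\cA)$ and let $\cA'$ be obtained from $\cA$ by shifting $\mathrm{Per}^k$ to the left. The only beads whose positions change are those of $\mathrm{Per}^k$, and they move strictly to the left. Since the periods are extracted greedily from the right, $\mathrm{Per}^1(\cA),\dots,\mathrm{Per}^{k-1}(\cA)$ all lie to the right of $\mathrm{Per}^k$ and are therefore untouched, giving $\mathrm{Per}^j(\cA')=\mathrm{Per}^j(\cA)$ for all $j<k$. Next I would pin down the unique candidate edge: because $\cA$ and $\cA'$ differ only in the positions of the moved beads, the only single-period right shift of $\cA'$ that can produce $\cA$ is the shift of the period consisting of exactly those moved beads, and such a period exists in $\cA'$ precisely when the moved beads form some $\mathrm{Per}^j(\cA')$.

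Granting this, the Theorem says there is an edge $\cA'\to\cA$ if and only if the right shift of $\mathrm{Per}^j(\cA')$ is the $j$'th $e$-period of $\cA$. But shifting the moved beads back to the right returns them to their original position, namely $\mathrm{Per}^k(\cA)$, which is by definition the $k$'th $e$-period of $\cA$ and is not the $j$'th for any $j\neq k$. Hence the edge criterion forces $j=k$. Combining this with the observation of the previous paragraph, an edge exists if and only if the moved beads form $\mathrm{Per}^k(\cA')$, i.e.\ if and only if the left shift of $\mathrm{Per}^k$ equals the $k$'th $e$-period of $\cA'$, which is exactly the assertion of the Corollary.

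The hard part will be justifying that $\cA$ and $\cA'$ differ only by the single moved period, so that the recovering right shift is unique and its index is forced to be $k$; this amounts to checking that a left shift of one period cannot be undone by a right shift of a different period, and that $\cA'$ retains enough periodicity for the phrase ``$\mathrm{Per}^k(\cA')$'' to be meaningful. I would dispatch the degenerate case first: if the moved beads do not form any $e$-period of $\cA'$ (in particular if $\cA'$ fails to be totally $e$-periodic, so no $k$'th period exists), then both sides of the claimed equivalence are false, since there is then no period right shift producing $\cA$ and hence no edge. This reduces matters to the situation in which all the relevant periods are defined, where the argument above applies verbatim.
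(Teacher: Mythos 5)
Your proof is correct and is essentially the argument the paper intends: the corollary is stated there without proof precisely because it follows from the theorem by swapping the roles of $\cA$ and $\cA'$ and using that left and right shifts of a fixed set of $e$ beads are mutually inverse, which is the content of your argument (including your key uniqueness step that a left shift of one period cannot be undone by a right shift of a different period, and the forcing of the index $j=k$). One caveat: your justification that $\mathrm{Per}^j(\cA')=\mathrm{Per}^j(\cA)$ for $j<k$ --- that the earlier periods ``lie to the right'' of $\mathrm{Per}^k$ and are therefore untouched --- is imprecise, since distinct periods can overlap in columns (they are only disjoint as sets of beads); the claim is nevertheless true because the leftward move can never place a bead directly below a top-row bead of an earlier period (such a cell must be empty, or lie beyond the rightmost column, in the relevant reduced abacus), and this claim is genuinely needed, since it is what guarantees that $\cA'$ is totally $e$-periodic and hence that the theorem can be applied to $\cA'$ at all in the ``if'' direction.
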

\begin{example}\label{cusp abacus exl}
Let $e=3$ and consider the following totally $3$-periodic abacus with $\ul{\lambda}^t=((1^3),\emptyset)$ and $\ul{s}=(0,1)$, drawn with its $3$-periods:
\[
\TikZ{[scale=.6]
\draw
(2,2)node[fill,circle,inner sep=.5pt]{}
(2,1)node[fill,circle,inner sep=.5pt]{}
(1,2)node[fill,circle,inner sep=3pt]{}
(1,1)node[fill,circle,inner sep=3pt]{}
(0,2)node[fill,circle,inner sep=3pt]{}
(0,1)node[fill,circle,inner sep=3pt]{}
(-1,2)node[fill,circle,inner sep=3pt]{}
(-1,1)node[fill,circle,inner sep=3pt]{}
(-2,2)node[fill,circle,inner sep=3pt]{}
(-2,1)node[fill,circle,inner sep=.5pt]{}
(-3,2)node[fill,circle,inner sep=3pt]{}
(-3,1)node[fill,circle,inner sep=3pt]{}
(-4,2)node[fill,circle,inner sep=3pt]{}
(-4,1)node[fill,circle,inner sep=3pt]{}
(2,0)node[]{2}
(1,0)node[]{1}
(0,0)node[]{0}
(-1,0)node[]{-1}
(-2,0)node[]{-2}
(-3,0)node[]{-3}
(-4,0)node[]{-4}
;
\draw[very thick] plot [smooth,tension=.1]
coordinates{(1,1)(-1,1)};
\draw[very thick] plot [smooth,tension=.1]
coordinates{(1,2)(-1,2)};
\draw[very thick] plot [smooth,tension=.1]
coordinates{(-2,2)(-3,1)(-4,1)};
\draw[very thick] plot [smooth,tension=.1]
coordinates{(-3,2)(-4,2)(-4.5,1.5)};
}
\]
Then shifting $\mathrm{Per}^1$ to the left \textit{does not} describe an edge in the $\slinf$-crystal, because the left shift of $\mathrm{Per}^1$ would not be the first $e$-period of the new abacus. Since no other $e$-period can shift to the left, the abacus is a source vertex in the $\slinf$-crystal. We conclude by Theorem \ref{source vx} that $L((3),\emptyset)$ is a finite-dimensional irreducible representation of the rational Cherednik algebra of $B_3$ with parameters $c=\frac{1}{3}$ and $d=-\frac{1}{6}$.
\end{example}

 Let $\Upsilon_k^-$ be the $\slinf$-crystal operator that shifts $\mathrm{Per}^k$ to the left if doing so yields the $k$'th $e$-period of the new abacus, and otherwise sends an abacus to $0$ \cite{GerberN}. Similarly define $\Upsilon_k^+$ for the right shift of $\mathrm{Per}^k$. The $\slinf$-crystal and the $\sle$-crystal operators commute \cite{Gerber1}. \footnote{Therefore, we may always compute the $\slinf$-crystal on an abacus by applying a sequence $\tilde{e}_{i_1},\ldots,\tilde{e}_{i_r}$ of $\sle$-crystal operators that remove boxes to arrive at a totally $e$-periodic abacus, then applying the $\slinf$-operators to the totally $e$-periodic abacus by the rule above, then applying the reverse sequence $\tilde{f}_{i_r}\ldots,\tilde{f}_{i_1}$ of $\sle$-crystal operators that add boxes.} 
 
 For an abacus $\cA$ associated to the charged bipartition $|\ul{\lambda}^t,\ul{s}\rangle\in\cF_{e,\ul{s}}$ and a partition $\sigma$, define $\tilde{a}_{\sigma}\cA$ to be the abacus obtained from $\cA$ by shifting $\mathrm{Per}^1$ (recursively) to the right $\sigma_1$ times, then shifting $\mathrm{Per}^2$ (recursively) to the right $\sigma^2$ times, and so on \cite{Gerber1}. That is, if $\sigma=(\sigma_1,\ldots,\sigma_r)$ then $\tilde{a}_\sigma=(\Upsilon^+_{r})^{\sigma_r}\ldots(\Upsilon^+_{1})^{\sigma_1}$. If $\cA$ is the abacus associated to $|\ul\la^t,\ul{s}\rangle\in\cF_{e,\ul{s}}^2$ and $|\ul{\mu}^t,\ul{s}\rangle\in\cF_{e,\ul{s}}^2$ is the charged bipartition associated to $\tilde{a}_{\sigma}\cA$, we will write $\ul\mu^t=\tilde{a}_{\sigma}(\ul\la^t)$. The crystal operator $\tilde{a}_{\sigma}$ is categorified by a functor $A_\sigma$ related to parabolic induction, see \cite{ShanVasserot},\cite{Losev}.
 
 On the bipartition $\ul{\lambda}^t$, the operator $\Upsilon_i^-$ removes a certain ``good vertical strip" of $e$ boxes of successive charged contents from the rim of $\ul{\lambda}^t$ or acts by $0$ (and thus on $\ul{\lambda}$, this results in the removal of a certain horizontal strip) \cite{Gerber2}. Similarly, $\Upsilon_k^+$ either acts by $0$ or adds a ``good vertical strip" to $\ul{\lambda}^t$ (respectively, results in the addition of a horizontal strip to $\ul{\lambda}$). The strip may be broken into two pieces, one piece in the first component and the other in the second component \cite{Gerber2}. 
 \begin{example} In Example \ref{leftshiftexl}, shifting $\mathrm{Per}^3$ to the left describes an edge in the $\slinf$-crystal. On the Young diagram of $\ul{\lambda^t}$, $\Upsilon_3^-$ removes the vertical $3$-strip consisting of the three shaded boxes:
 \begin{center}
\ydiagram[*(white)]{3,3,3,1}*[*(Orange)]{3,3,3,2}\quad,\quad\ydiagram[*(white)]{3,3,3,1,1}*[*(Orange)]{3,3,3,2,2}
\end{center}
On the Young diagram of $\ul{\lambda}$, this results in the removal of the horizontal $3$-strip consisting of the three shaded boxes:
\begin{center}
\ydiagram[*(white)]{4,3,3}*[*(WildStrawberry)]{4,4,3}\quad,\quad\ydiagram[*(white)]{5,3,3}*[*(WildStrawberry)]{5,5,3}
\end{center}
 \end{example}
\begin{example}
Let $e=4$, $\ul{\lambda}=((3^2),(1^2))$, and $\ul{s}=(0,1)$. Then $|\ul{\lambda}^t,\ul{s}\rangle$ as an abacus is:
\[
\TikZ{[scale=.6]
\draw
(3,2)node[fill,circle,inner sep=3pt]{}
(3,1)node[fill,circle,inner sep=.5pt]{}
(2,2)node[fill,circle,inner sep=.5pt]{}
(2,1)node[fill,circle,inner sep=3pt]{}
(1,2)node[fill,circle,inner sep=.5pt]{}
(1,1)node[fill,circle,inner sep=3pt]{}
(0,2)node[fill,circle,inner sep=3pt]{}
(0,1)node[fill,circle,inner sep=3pt]{}
(-1,2)node[fill,circle,inner sep=3pt]{}
(-1,1)node[fill,circle,inner sep=.5pt]{}
(-2,2)node[fill,circle,inner sep=3pt]{}
(-2,1)node[fill,circle,inner sep=.5pt]{}
(-3,2)node[fill,circle,inner sep=3pt]{}
(-3,1)node[fill,circle,inner sep=3pt]{}
(-4,2)node[fill,circle,inner sep=3pt]{}
(-4,1)node[fill,circle,inner sep=3pt]{}
(3,0)node[]{3}
(2,0)node[]{2}
(1,0)node[]{1}
(0,0)node[]{0}
(-1,0)node[]{-1}
(-2,0)node[]{-2}
(-3,0)node[]{-3}
(-4,0)node[]{-4}
;
\draw[very thick,Orange] plot [smooth,tension=.1]
coordinates{(3,2)(2,1)(0,1)};
\draw[very thick] plot [smooth,tension=.1]
coordinates{(0,2)(-2,2)(-3,1)};
\draw[very thick] plot [smooth,tension=.1]
coordinates{(-3,2)(-4,1)(-4.5,1)};
\draw[very thick] plot [smooth,tension=.1]
coordinates{(-4,2)(-4.5,2)};
}
\]
In the $\slinf$-crystal we may shift $\mathrm{Per}^1$ to the left two times to obtain the empty bipartition. On the Young diagram of $\ul{\lambda}^t$, shifting $\mathrm{Per}^1$ to the left once removes the rightmost, darker-shaded vertical strip of boxes, then shifting $\mathrm{Per}^1$ to the left again removes the remaining, lighter-shaded vertical strip:
\begin{center}
\ydiagram[*(Apricot)]{1,1,1}*[*(Orange)]{2,2,2}\quad,\quad\ydiagram[*(Apricot)]{1}*[*(Orange)]{2}
\end{center}
On the Young diagram of $\ul{\lambda}$, shifting $\mathrm{Per}^1$ to the left once removes the bottom-most, darker-shaded horizontal strip of boxes, then shifting $\mathrm{Per}^1$ to the left again then removes the remaining, lighter-shaded horizontal strip:
\begin{center}
\ydiagram[*(Lavender)]{3}*[*(WildStrawberry)]{3,3}\quad,\quad\ydiagram[*(Lavender)]{1}*[*(WildStrawberry)]{1,1}
\end{center}
We have $\ul{\lambda}^t=\tilde{a}_{(2)}(\emptyset,\emptyset)$.
\end{example}
The general rule for $\tilde{a}_{(m)}(\emptyset,\emptyset)$ is easily seen to be the following. Let $\ul{s}=(0,s)$. For $0<s<e$ we have $\tilde{a}_{(m)}(\emptyset,\emptyset)=((e-s)^m,(s^m))^t$. For $s\geq e$ we have $\tilde{a}_{(m)}(\emptyset,\emptyset)=(\emptyset,(e^m))^t$. For $s\leq 0$ we have $\tilde{a}_{(m)}(\emptyset,\emptyset)=((e^m),\emptyset)^t$.


\section{Rectangles parametrize finite-dimensional unitary irreducible representations}\label{section rectangle}
This section consists of the proof of Theorem \ref{unitary+fd}. The proof is broken into two steps. First, we use Lemma \ref{lemma ES} together with the conditions in \cite[Corollary 8.4]{Griffeth} to characterize the unitary finite-dimensional irreducible representations labeled by bipartitions with only one non-empty component. Second, we show that if $\lambda^1$ and $\lambda^2$ are both non-empty partitions, then $L(\lambda^1,\lambda^2)$ is never both unitary and finite-dimensional. 

\begin{proposition}\label{rectangle}
Fix $\ul{\lambda}\in\mathcal{P}^2$ such that $\ul{\lambda}=(\lambda,\emptyset)$. Then $L(\ul{\lambda})$ is a finite-dimensional and unitary $H_{e,\ul{s}}(B_n)$-representation for some Fock space parameter $(e,\ul{s})$ if and only if $\lambda$ is a rectangle. Conversely, given any rectangle $\lambda$, for each $e\geq 2$ there is a unique charge $\ul{s}$ (up to shift) such that $L(\lambda,\emptyset)$ is a finite-dimensional and unitary $H_{e,\ul{s}}(B_n)$-representation.
\end{proposition}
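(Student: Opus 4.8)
The plan is to run everything through Lemma~\ref{lemma ES}, reducing both directions to a computation with the $c$-function. Lemma~\ref{lemma ES} says that a unitary $L(\ul\lambda)$ is finite-dimensional exactly when $L(\ul\lambda)=\ul\lambda$, and that this holds if and only if $c_{\ul\lambda}=0$ and $c_{\ul\mu}=1$ for every irreducible constituent $\ul\mu$ of $\hstar\otimes\ul\lambda$. So the first task is to determine the constituents of $\hstar\otimes(\lambda,\emptyset)$. I would establish the clean decomposition
\[
\hstar\otimes(\lambda,\emptyset)=\bigoplus_{b}(\lambda\setminus b,(1)),
\]
the sum being over removable boxes $b$ of $\lambda$, each summand occurring once. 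The quickest route is to restrict to the normal subgroup $(\mathbb{Z}/2)^n\subset B_n$: there $(\lambda,\emptyset)$ is trivial while $\hstar$ carries exactly the ``single sign'' characters, so every constituent $(\alpha,\beta)$ of the tensor product must have $|\beta|=1$. A dimension count against the $S_n\downarrow S_{n-1}$ branching identity $\dim\lambda=\sum_b\dim(\lambda\setminus b)$, using $\dim(\lambda\setminus b,(1))_{B_n}=n\,\dim(\lambda\setminus b)$, then forces the displayed list with multiplicity one.

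With the constituents in hand the forward direction is immediate from the $c$-function formula. For $\ul\lambda=(\lambda,\emptyset)$ one has $c_{(\lambda,\emptyset)}=n-\tfrac{sn}{e}-\tfrac2e\sum_{b\in\lambda}\ct(b)$, and for a constituent with $\nu=\lambda\setminus b$ a short substitution using $c_{(\lambda,\emptyset)}=0$ gives $c_{(\nu,(1))}=-1+\tfrac{2s}{e}+\tfrac2e\ct(b)$. Hence demanding $c_{(\nu,(1))}=1$ for every removable $b$ is equivalent to $\ct(b)=e-s$ for all removable boxes, i.e. to all removable boxes of $\lambda$ sharing a common content. A partition has this property precisely when it has a single removable corner, i.e. precisely when it is a rectangle; so finite-dimensionality together with unitarity forces $\lambda$ to be a rectangle.

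For the converse, let $\lambda$ be the $r\times q$ rectangle. Its unique removable box has content $q-r$ and $\sum_{b\in\lambda}\ct(b)=\tfrac{rq(q-r)}2$, so $c_{(\lambda,\emptyset)}=\tfrac{n}{e}\bigl(e-s+r-q\bigr)$. The two conditions of Lemma~\ref{lemma ES}—namely $c_{(\lambda,\emptyset)}=0$ and $\ct(b)=q-r=e-s$—coincide and reduce to the single equation $r-q=s-e$, which for each fixed $e\ge 2$ determines $s$ uniquely; and $\ul{s}=(0,s)$ is unique up to the shift $\ul{s}\mapsto\ul{s}+(a,a)$ that does not change the algebra. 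At this parameter $L(\lambda,\emptyset)=(\lambda,\emptyset)$ is finite-dimensional by Lemma~\ref{lemma ES}. It remains to confirm unitarity: since $L(\lambda,\emptyset)$ is concentrated in degree $0$, its contravariant form is the $B_n$-invariant form on the irreducible module $(\lambda,\emptyset)$, which is definite; equivalently one verifies directly that $(\lambda,\emptyset)$ with $r-q=s-e$ meets the unitarity conditions of \cite[Corollary 8.4]{Griffeth}.

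I expect the main obstacle to be pinning the tensor-product decomposition down precisely—ruling out constituents with larger second component and checking each $(\lambda\setminus b,(1))$ occurs exactly once—since the $c$-value computation and the ``equal contents $\Leftrightarrow$ rectangle'' observation are then routine. A secondary point needing care is the passage from finite-dimensionality to \emph{unitarity} in the converse, where one must invoke either positivity of the invariant form on the degree-$0$ module or Griffeth's explicit classification.
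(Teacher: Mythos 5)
Your proposal is correct, but it takes a genuinely different route from the paper. The paper's proof is organized around Griffeth's classification: assuming $c_{\ul\lambda}=0$ (forced by Lemma \ref{lemma ES}), it checks which of the cases (a)--(e) of \cite[Corollary 8.4]{Griffeth} can hold, shows that (a),(b) force $\lambda=(n)$ while (c),(d),(e) require the integer $m=\frac{2}{n}\sum_b\ct(b)$ to satisfy $m\geq\ct(b_2)$ --- impossible for a non-rectangle --- and in the converse direction obtains unitarity from Griffeth's case (d); the module $\hstar\otimes\ul\lambda$ is computed only at the very end, and only for rectangles, where it is irreducible. You bypass \cite[Corollary 8.4]{Griffeth} entirely: the full decomposition $\hstar\otimes(\lambda,\emptyset)=\bigoplus_b(\lambda\setminus b,(1))$ turns the Lemma \ref{lemma ES} condition ``$c_{\ul\mu}=1$ for every constituent'' into ``every removable box has content $e-s$'', which forces a single removable corner, hence a rectangle; and unitarity in the converse comes for free from positivity of the contravariant form in degree $0$. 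Your argument is shorter and is essentially a self-contained version of the Montarani/Etingof--Stoica route that the introduction mentions but deliberately avoids: the paper's stated purpose for this proposition is to cross-check Griffeth's combinatorial classification against the crystal-theoretic one, and its familiarity with Griffeth's cases and the boxes $b_1,\dots,b_5$ is groundwork reused throughout Propositions \ref{crystal ops unitaries} and \ref{weak res unitaries semisimple}. So the two proofs buy different things: yours, economy and independence from \cite{Griffeth}; the paper's, the consistency check and the setup for Section \ref{Sec:Proof of Main Thm 2}.

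Two steps in your write-up need tightening. First, the dimension count does not pin down the decomposition of $\hstar\otimes(\lambda,\emptyset)$: restricting to $(\mathbb{Z}/2)^n$ does show every constituent has the form $(\alpha,(1))$ with $|\alpha|=n-1$, but the identity $\sum_\alpha m_\alpha\dim S^\alpha=\sum_b\dim S^{\lambda\setminus b}$ cannot exclude constituents with $\alpha$ not of the form $\lambda\setminus b$, since dimensions alone do not distinguish such $\alpha$. The clean fix is the projection formula: $\hstar\cong\Ind_{B_{n-1}\times B_1}^{B_n}(\triv\boxtimes\chi)$ with $\chi$ the nontrivial character of $B_1=\mathbb{Z}/2$, hence
$\hstar\otimes(\lambda,\emptyset)\cong\Ind_{B_{n-1}\times B_1}^{B_n}\bigl(\Res^{B_n}_{B_{n-1}\times B_1}(\lambda,\emptyset)\otimes(\triv\boxtimes\chi)\bigr)=\bigoplus_b\Ind_{B_{n-1}\times B_1}^{B_n}\bigl((\lambda\setminus b,\emptyset)\boxtimes(\emptyset,(1))\bigr)=\bigoplus_b(\lambda\setminus b,(1))$.
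Second, the implication ``$L(\ul\lambda)=\ul\lambda$ implies unitary'' is not literally contained in Lemma \ref{lemma ES} as stated (its first sentence presupposes unitarity), so in the converse it must be invoked as an extra input; your justification --- the contravariant form on $\Delta(\ul\lambda)$ is normalized to restrict to the positive-definite $W$-invariant form on the lowest weight space, so it is definite on $L(\ul\lambda)=\ul\lambda$ --- is the correct one, and is exactly the easy direction of \cite[Proposition 4.1]{EtingofStoica}.
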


\begin{proof} By Lemma \ref{lemma ES} we must have $c_{\ul{\lambda}}=0$ if $\ul{\lambda}$ is unitary.
Assume that $c_{\ul{\lambda}}=0$. Then we have 
$$\frac{s}{e}=\frac{1}{n}\left( n-\frac{2}{e}\sum_{b\in\ul{\lambda}}\mathrm{ct}(b)\right)=1-\frac{2}{en}\sum_{b\in\ul{\lambda}}\mathrm{ct}(b)$$
and thus the parameters for the rational Cherednik algebra are 
$$c=\frac{1}{e},\qquad d=\frac{1}{2}-\frac{2}{en}\sum_{b\in\ul{\lambda}}\mathrm{ct}(b)$$

We suppose that $c_{\ul{\lambda}}=0$ and check when conditions (a)-(e) of \cite[Corollary 8.4]{Griffeth} can hold. For an integer $m$, consider the quantity
$$d+mc=\frac{1}{2}+\frac{1}{e}\left(m-\frac{2}{n}\sum_b\mathrm{ct}(b)\right)$$
under our assumption $c_{\ul{\lambda}}=0$. The inequality $d+mc\leq \frac{1}{2}$ holds if and only if $m\leq \frac{2}{n}\sum_b\mathrm{ct}(b)$, and $d+mc=\frac{1}{2}$ if and only if $m=\frac{2}{n}\sum_b\mathrm{ct}(b)$. 

Let $b_1$ be the box of largest content in $\lambda$ and let $b_2$ be the removable box of largest content in $\lambda$.
If $\lambda$ has $q$ columns then $\mathrm{ct}(b_1)=q-1$. The inequality $d+\mathrm{ct}(b_1)c\leq \frac{1}{2}$ holds if and only if $q-1\leq \frac{2}{n}\sum_b\mathrm{ct}(b)$. If $q=n$ then $\lambda=(n)$ and we have equality, and moreover, $b_2=b_1$ so case (d) holds and $L(n)$ is unitary. 
Suppose next that $\lambda$ is a rectangle with $q$ columns and $r$ rows where $r>1$. So $n=qr$ and $\frac{2}{n}\sum_b\mathrm{ct}(b)=\frac{2}{n}\left(\frac{n(q-r)}{2}\right)=q-r$. Since $r>1$ then we have $q-1>\frac{2}{n}\sum_b\mathrm{ct}(b)$, and thus $d+\mathrm{ct}(b_1)c>\frac{1}{2}$. Finally, if $\lambda$ is not a rectangle but its first row has $q$ boxes and $\lambda$ has $r$ rows, then clearly $q-r>\frac{2}{n}\sum_b\mathrm{ct}(b)$ so we also have $q-1>\frac{2}{n}\sum_b\mathrm{ct}(b)$. It follows that for arbitrary $\lambda\neq (n)$, cases (a) and (b) of \cite[Corollary 8.4]{Griffeth}  cannot occur. 

The remaining three cases (c),(d),(e) of \cite[Corollary 8.4]{Griffeth} all require the equation $d+mc=\frac{1}{2}$ to be satisfied for some integer $m$, so equivalently, $m=\frac{2}{n}\sum_{b}\mathrm{ct}(b)$ for some integer $m$. If $\lambda$ is a rectangle with $q$ columns and $r$ rows, $r>1$, then the solution is $m=q-r=\mathrm{ct}(b_2)$, so case (d) holds and $L(\lambda,\emptyset)$ is unitary. Otherwise, observe that cases (c),(d),(e) all require $m\geq \mathrm{ct}(b_2)$. Writing $\lambda=(q^r, \lambda_{r+1},\lambda_{r+2}\dots)$, $q>\lambda_{r+1}$, we have $\mathrm{ct}(b_2)=q-r$ which is $2$ times the average content of the boxes contained in the $q$ by $r$ rectangle comprising the first $r$ rows of $\lambda$. Adding boxes below this rectangle clearly lowers the average content of the boxes in the diagram, so we always have $m<\mathrm{ct}(b_2)$ if $\lambda$ is not a rectangle. Therefore cases (c),(d),(e) cannot hold if $\lambda$ is not a rectangle.

We conclude that given $e\geq 2$, a rectangle $\lambda$ with $r$ rows and $q$ columns, and $\ul{\lambda}=(\lambda,\emptyset)$, we have $c_{\ul{\lambda}}=0$ and $L(\ul{\lambda})$ unitary exactly when $s-e=r-q$. Now, $\hstar\otimes\ul{\lambda}$ is the irreducible $B_n$-representation labeled by $\ul{\mu}:=((q^{r-1},q-1),(1))$. When $s=e+r-q$, we compute that $\ul{\mu}=1$. Therefore $L(\lambda,\emptyset)$ is finite-dimensional and unitary exactly when $s=e-q+r$; and if $\lambda$ is not a rectangle, then $L(\lambda,\emptyset)$ is never finite-dimensional and unitary.
\end{proof}

\begin{remark}\label{switching components}
If $\ul{\lambda}=(\lambda,\emptyset)$ where $\lambda$ is a rectangle with $q$ columns and $r$ rows then $s=e-q+r$ is the smallest value of $s$ for which $L(\ul{\lambda})$ is finite-dimensional \cite{GerberN}.
\end{remark}
\begin{remark}\label{switch} Switching the components $\lambda^1$ and $\lambda^2$ is induced by an isomorphism of the underlying rational Cherednik algebras sending $d$ to $-d$ \cite[\S 2.3.4]{Losev}. Thus an analogous result to Proposition \ref{rectangle} holds for $L(\emptyset, \lambda)$: $L(\emptyset,\lambda)$ is unitary and finite-dimensional for Fock space parameter $(e,(s_1',s_2'))$ if and only if $L(\lambda,\emptyset)$ is unitary and finite-dimensional for Fock space parameter $(e,(s_1,s_2))$ where $s_2'-s_1'=e-(s_2-s_1)$. This justifies the conditions in Theorem \ref{unitary+fd} for when $L(\emptyset,\lambda)$ is unitary and finite-dimensional. 
\end{remark}

Next, we consider the case that both components of $\ul{\lambda}$ are non-empty. 
We will make an abacus argument. 
\begin{lemma}\label{>=e col} If $\ul{\lambda}=(\lambda^1,\lambda^2)$ with both $\lambda^1\neq \emptyset$ and $\lambda^2\neq \emptyset$ and the abacus $\cA(\ul{\lambda})$ associated to $|\ul{\lambda}^t,\ul{s}\rangle$ is totally $e$-periodic, then $\ul{\lambda}$ has at least $e$ nonzero columns. 
\end{lemma}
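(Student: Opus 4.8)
The plan is to extract the first $e$-period $\mathrm{Per}^1$ of $\cA(\ul\lambda)$ and show that, once both components are nonempty, all $e$ of its beads correspond to nonzero columns. Recall from the setup that the number of nonzero columns of $\ul\lambda$ equals the number of beads of $\cA(\ul\lambda)$ having a space somewhere to their left within their own row; call such a bead \emph{exposed}. Since $\cA(\ul\lambda)$ is totally $e$-periodic the period $\mathrm{Per}^1$ exists and consists of exactly $e$ beads at distinct columns, so it suffices to prove that every bead of $\mathrm{Per}^1$ is exposed, and then the count of nonzero columns is at least $e$. Note this is the only place total $e$-periodicity enters: all I need from it is the existence of $\mathrm{Per}^1$.

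First I would record the shape of $\mathrm{Per}^1$. Writing $R$ for the rightmost column of $\cA(\ul\lambda)$ containing a bead, the period occupies the $e$ consecutive columns $R, R-1, \dots, R-e+1$, and because the row indices $j_1 \ge \dots \ge j_e$ are non-increasing there is an integer $a$ with $0 \le a \le e$ so that the beads in columns $R, \dots, R-a+1$ lie in the top row (component $\lambda^2$) and those in columns $R-a, \dots, R-e+1$ lie in the bottom row (component $\lambda^1$). The defining ``empty space below'' condition for the $a$ top-row period beads forces the bottom row to have empty positions in all of the columns $R-a+1, \dots, R$.

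Next I would analyze the two rows separately, using nonemptiness of each component to locate the leftmost gap in each row. For the top-row beads (when $a \ge 1$): since $\lambda^2 \ne \emptyset$, the leftmost gap $g_2$ of row $2$ exists, and since columns $R-a+1, \dots, R$ of row $2$ are occupied by period beads, $g_2 \le R-a$; hence every top-row period bead, sitting in a column $> R-a \ge g_2$, is exposed. For the bottom-row beads (when $a \le e-1$): the topmost bead of row $1$ is exactly the one in column $R-a$, since columns $R-a+1, \dots, R$ of row $1$ are empty and nothing lies above column $R$. Because $\lambda^1 \ne \emptyset$, row $1$ is not a pure baseline (empty-partition) configuration, so it must contain a gap strictly below its topmost bead; as columns $R-e+1, \dots, R-a$ are occupied by period beads, this gap must lie in a column $\le R-e$, hence strictly to the left of every bottom-row period bead, so those are exposed too.

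The main obstacle is precisely this last step. A priori the leftmost gap of row $1$ could fall between the bottom-row period beads and the block of empties forced above them, leaving some period beads unexposed. The key point, and the place where the hypothesis that \emph{both} components are nonempty does the essential work, is that such an intermediate gap cannot occur: it would either collide with a column already occupied by a period bead, or, if it sat immediately above column $R-a$, it would force row $1$ to be a baseline configuration and hence $\lambda^1 = \emptyset$, contrary to hypothesis. Ruling this out shows every bead of $\mathrm{Per}^1$ is exposed, giving at least $e$ nonzero columns and proving the lemma. (I would also note that the one-component situation is genuinely excluded here: if, say, $\lambda^2 = \emptyset$ and the rightmost bead were a baseline bead of row $2$, then $g_2$ would not exist and the top-row period beads need not be exposed, which is exactly why the hypothesis is needed.)
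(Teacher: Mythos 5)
Your proof is correct and follows essentially the same route as the paper's: decompose $\mathrm{Per}^1$ at the index $a$ into its top-row and bottom-row segments (using $j_1\geq\dots\geq j_e$ and the empty-space-below condition), then use $\lambda^2\neq\emptyset$ and $\lambda^1\neq\emptyset$ respectively to conclude that each segment's beads label nonzero columns. Your "exposed bead" criterion (a space somewhere to the left in the same row) is just a rephrasing of the paper's identification of these beads with the initial, equal-length, nonzero columns of each component, so the two arguments differ only in bookkeeping.
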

\begin{proof}Suppose $\cA(\ul{\lambda})$ is totally $e$-periodic and consider the maximal beta-number $\beta^j_1$ in $\cA(\ul{\lambda})$ (possibly it occurs twice, as $\beta^2_1$ and $\beta^1_1$). It must correspond to at least one nonzero column of $\ul{\lambda}$ since both components of $\ul{\lambda}$ are nonempty. 
Then either all of $\mathrm{Per}^1$ lies in the top row of $\cA(\ul{\lambda})$ and corresponds to $e$ nonzero columns of $\lambda^2$ of the same size, or for some $0\leq a\leq e-1$, the first $a$ beads of $\mathrm{Per}^1$ are $\beta^2_1,\dots \beta^2_a$ lying in the top row and these must correspond to $a$ nonzero columns of $\lambda^2$ since $\lambda^2\neq \emptyset$; while the remaining $e-a$ beads of $\mathrm{Per}^1$ are $\beta^1_{1},\dots, \beta^1_{e-a}$ lying in the bottom row and these must correspond to $e-a$ nonzero columns of $\lambda^1$ since $\lambda^1\neq \emptyset$. So $\ul{\lambda}$ has at least $e$ columns.
\end{proof}

\begin{lemma}\label{e col} If $\ul{\lambda}=(\lambda^1,\lambda^2)$ with both $\lambda^1\neq \emptyset$ and $\lambda^2\neq \emptyset$ and $\ul{\lambda}$ has exactly $e$ nonzero columns then $L(\ul{\lambda})$ is not finite-dimensional.
\end{lemma}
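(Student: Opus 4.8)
The plan is to show that $\ul\la^t$ fails to be a source vertex in at least one of the two crystals on $\cF^2_{e,\ul s}$, so that $L(\ul\la)$ is not finite-dimensional by Theorem \ref{source vx}. Since $\ul\la^t$ is a source for the $\sle$-crystal if and only if $\cA(\ul\la)$ is totally $e$-periodic \cite[Theorem 5.9]{JaconLecouvey}, I can immediately dispose of the case where $\cA(\ul\la)$ is not totally $e$-periodic. So I assume for the rest of the argument that $\cA(\ul\la)$ is totally $e$-periodic, and I aim to exhibit an incoming arrow to $\ul\la^t$ in the $\slinf$-crystal, namely a period that shifts to the left so as to describe a valid $\slinf$-edge.

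Next I would pin down the shape of the first period $\mathrm{Per}^1$. Exactly as in the proof of Lemma \ref{>=e col}, each of its $e$ beads corresponds to a nonzero column of $\ul\la$, the top-row beads to columns of $\lambda^2$ and the bottom-row beads to columns of $\lambda^1$. Because $\ul\la$ has exactly $e$ nonzero columns, $\mathrm{Per}^1$ accounts for all of them; because both $\lambda^1$ and $\lambda^2$ are nonempty, $\mathrm{Per}^1$ straddles the two rows, with $a$ beads on top and $e-a$ on the bottom for some $1\le a\le e-1$. Writing $\beta$ for the rightmost occupied column, the top beads of $\mathrm{Per}^1$ then occupy columns $\beta,\beta-1,\dots,\beta-a+1$ and the bottom beads occupy columns $\beta-a,\dots,\beta-e+1$.

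The crux is to shift $\mathrm{Per}^1$ one step to the left and to check, via the left-shift criterion for $\slinf$-edges (the corollary to the theorem of \cite{GerberN} recalled above), that this is a genuine edge. Here the hypothesis of \emph{exactly} $e$ columns does the essential work: since no nonzero column lives outside $\mathrm{Per}^1$, the $(a+1)$-st column of $\lambda^2$ and the $(e-a+1)$-st column of $\lambda^1$ are both empty, which forces the top-row position $\beta-a$ and the bottom-row position $\beta-e$ to be vacant. Hence no bead of $\mathrm{Per}^1$ is blocked on its left, and the simultaneous left shift is a legal move of beads. One then checks that the shifted beads are the first period $\mathrm{Per}^1(\cA')$ of the resulting abacus $\cA'$: they again occupy $e$ consecutive columns whose rightmost bead is now the globally rightmost bead of $\cA'$, and the ``empty space below'' condition for the shifted top beads persists because the bottom beads moved left in lockstep. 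Thus the left shift describes an edge $\cA'\to\ul\la^t$, so $\ul\la^t$ is not a source for the $\slinf$-crystal, and $L(\ul\la)$ is not finite-dimensional.

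I expect the third step to be the main obstacle. The delicate point is to rule out a collision in which a top-row bead sits at column $\beta-a$ directly above the bottom bead of $\mathrm{Per}^1$ there; such a bead would block the left shift and, moreover, would signal an $(e+1)$-st nonzero column. The content of the ``exactly $e$ columns'' hypothesis is precisely that there is nothing left for such a bead to represent, which both unblocks the shift and guarantees that the shifted period is genuinely the first period of $\cA'$ rather than merely a legal bead move. In effect this is the sharp converse of the inequality established in Lemma \ref{>=e col}.
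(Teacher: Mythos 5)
Your proof is correct, and it shares the paper's skeleton --- reduce via Theorem \ref{source vx} and \cite[Theorem 5.9]{JaconLecouvey} to the totally $e$-periodic case, then use the structure of $\mathrm{Per}^1$ from Lemma \ref{>=e col} (with $a$ top-row beads and $e-a$ bottom-row beads, $0<a<e$, accounting for all $e$ nonzero columns) --- but your endgame is genuinely different. The paper finishes in one stroke by citing the pattern-avoidance criterion of \cite[Theorem 7.13]{GerberN}: the space in the bottom row to the left of $\beta^1_{e-a}$ and the space in the top row to the left of $\beta^2_a$ form a forbidden pair, so $|\ul\la^t,\ul{s}\rangle$ is not an $\slinf$-source. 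You instead exhibit an explicit incoming $\slinf$-edge: you shift $\mathrm{Per}^1$ one step left and verify, via the left-shift criterion recalled in the paper's Corollary, that this is a genuine edge. Pleasingly, the two vacancies you need for the shift to be legal (top position $\beta-a$, bottom position $\beta-e$) are exactly the paper's violating pair of spaces, so the two arguments are two readings of the same configuration. What your route buys: it is self-contained relative to what the paper actually states (the edge rule is recalled in full, whereas Theorem 7.13 of \cite{GerberN} is only cited), and it produces the predecessor vertex $\cA'$ explicitly. What it costs: you must check that the shifted period really is $\mathrm{Per}^1(\cA')$ --- precisely the point where such arguments can fail, as in Example \ref{cusp abacus exl} --- and your verification there is slightly compressed. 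In particular, when $(\la^2)^t_1=1$ the top row of $\cA'$ has a default bead at position $\beta-a-1$, immediately above the shifted bottom bead; this does not derail the argument (that top bead has no empty space below it, so the period of $\cA'$ is forced down onto the shifted bottom beads), but it is a case worth spelling out, along with the inequalities $\beta>s_2$ and $\beta>s_1+a$ that make your two vacancies strict consequences of the exactly-$e$-columns hypothesis.
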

\begin{proof}Suppose $|\ul{\lambda}^t,\ul{s}\rangle$ is a source vertex in the $\sle$-crystal, then $\cA(\ul{\lambda})$ is totally $e$-periodic by  \cite[Theorem 5.9]{JaconLecouvey}.
As in the proof of Lemma \ref{>=e col}, the first $e$-period $\mathrm{Per}^1$ of  $\cA(\ul{\lambda})$ then corresponds to $e$ nonzero columns of $\ul{\lambda}$. By the assumption that $\ul{\lambda}$ has exactly $e$ columns, then the beads of $\mathrm{Per}^1$ exactly label the nonzero columns of $\ul{\lambda}$, and $\mathrm{Per}^1$ is as in the proof of Lemma \ref{>=e col} where the first $a$ beads of $\mathrm{Per}^1$ are in the top row of $\cA(\ul{\lambda})$ (corresponding to $a$ nonzero columns of $\lambda^2$), and the last $e-a$ beads of $\mathrm{Per}^1$ are in the bottom row of $\cA(\ul{\lambda})$, and $0<a<e$.  
But then the space in the bottom row to the left of $\beta_{e-a}^1$ and the space in the top row to the left of $\beta_a^2$ form a pair of spaces violating the condition in \cite[Theorem 7.13]{GerberN}, so $|\ul{\lambda},\ul{s}\rangle$ is not a source vertex in the $\slinf$-crystal, and in particular, $L(\ul{\lambda})$ is not finite-dimensional by Theorem \ref{source vx}. 
\end{proof}

\begin{proposition}\label{nonempty components} If $\ul{\lambda}=(\lambda^1,\lambda^2)$ with both $\lambda^1\neq \emptyset$ and $\lambda^2\neq \emptyset$ then $L(\ul{\lambda})$ is never both unitary and finite-dimensional.
\end{proposition}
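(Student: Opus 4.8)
The plan is to argue by contradiction. Suppose $L(\ul\la)$ with $\lambda^1\neq\emptyset$ and $\lambda^2\neq\emptyset$ is both unitary and finite-dimensional. By Lemma~\ref{lemma ES}, a unitary finite-dimensional module satisfies $L(\ul\la)=\ul\la$, and in particular $c_{\ul\la}=0$. By Theorem~\ref{source vx}, $\ul\la^t$ is then a source vertex for both the $\sle$-crystal and the $\slinf$-crystal, so the abacus $\cA(\ul\la)$ is totally $e$-periodic. Lemma~\ref{>=e col} now forces $\ul\la$ to have at least $e$ nonzero columns, and Lemma~\ref{e col} rules out exactly $e$; hence $\ul\la$ has at least $e+1$ nonzero columns. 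The task is to show that this configuration is incompatible with unitarity.

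To produce the contradiction I would mirror the content bookkeeping of the proof of Proposition~\ref{rectangle}, but carried out in both components at once. Reading $c_{\ul\la}=0$ off the displayed formula for $c_{\ul\la}$ couples the relative charge $s$ to $|\lambda^1|$, $|\lambda^2|$, and $\sum_b\ct(b)$, just as it fixed $s/e$ in the one-component case (when $|\lambda^1|=|\lambda^2|$ it instead imposes a content identity in which $s$ is free). Feeding this relation into Griffeth's inequalities \cite[Corollary 8.4]{Griffeth}, the boxes of the first component are governed by comparisons of $d+mc$ with $\tfrac12$, while the boxes of the second component are governed by the analogous comparisons after the component-swap substitution $d\mapsto -d$ of Remark~\ref{switch} (equivalently, via the charged contents $\cct(b)=s+y-x$ of the second component). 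Applying these case by case to the box of largest content and the removable box of largest content in each component, and using that $\ul\la$ has at least $e+1$ nonzero columns split across the two rows of $\cA(\ul\la)$, I would check that no single one of Griffeth's cases (a)--(e) can be satisfied consistently with $c_{\ul\la}=0$. That is the desired contradiction.

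I expect the main obstacle to be precisely this two-component case analysis. In the one-component setting it sufficed to track the single pair of extremal boxes $b_1,b_2$; with both $\lambda^1$ and $\lambda^2$ nonempty one must simultaneously control the extremal boxes of the two components and, crucially, account for the fact that $c_{\ul\la}=0$ no longer leaves $s$ free but ties it to the contents of both components, so the content of the largest box in one component cannot be estimated without also bounding the other. An alternative that might shorten the argument is to stay entirely on the abacus and extend the pattern argument of Lemma~\ref{e col}: one would try to show that a totally $e$-periodic ``log jam'' avoiding the pattern of \cite[Theorem 7.13]{GerberN} whose first period $\mathrm{Per}^1$ splits across both rows cannot realize the average charged content dictated by $c_{\ul\la}=0$ once there is more than one $e$-period. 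Either way, the essential difficulty is verifying that the ``packed'' abacus required for finite-dimensionality and the content constraint imposed by unitarity cannot coexist when both components are nonempty.
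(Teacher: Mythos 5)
Your setup is sound and matches the paper's in its crystal-theoretic half: finite-dimensionality forces $\ul\la^t$ to be a source vertex in both crystals (Theorem \ref{source vx}), hence $\cA(\ul\la)$ is totally $e$-periodic, and Lemmas \ref{>=e col} and \ref{e col} then make any configuration with at most $e$ nonzero columns impossible. But the core of the argument --- showing that unitarity is incompatible with $\ul\la$ having more than $e$ nonzero columns --- is left as an unexecuted plan, and the plan as stated rests on the wrong classification. For a bipartition with \emph{both} components nonempty, the unitarity conditions are those of \cite[Corollary 8.5]{Griffeth} (cases (a)--(g)), not \cite[Corollary 8.4]{Griffeth}; and these conditions are genuinely coupled inequalities mixing boxes of the two components (e.g.\ bounds on $\ct(b_1)-\ct(b_4')+1$ and $\ct(b_1')-\ct(b_4)+1$), not the one-component conditions of Corollary 8.4 applied to each component separately via the swap $d\mapsto -d$ of Remark \ref{switch}. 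Treating the two components independently in this way would lose exactly the coupling that makes the argument work.

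The paper's proof also shows that your detour through $c_{\ul\la}=0$ and Lemma \ref{lemma ES} is unnecessary: in each case of \cite[Corollary 8.5]{Griffeth}, simply \emph{adding} the paired inequalities (after translating the contents $\ct(b_1),\ct(b_4),\ct(b_1'),\ct(b_4')$, etc.\ into counts of rows and columns, and using that each component contributes at least one row) yields $\#\{\mathrm{columns}(\ul\la)\}\leq e$ outright, with no reference to the $c$-function; cases (d), (e) reduce to (b), (c) by the substitution $c\mapsto -c$ (i.e.\ transposing and negating the charge), and case (g) is vacuous for Fock space parameters. Without carrying out some such case analysis --- and your own sketch concedes this is ``the main obstacle'' --- the claimed contradiction with having at least $e+1$ columns is not established, so the proposal as it stands has a genuine gap rather than a complete alternative proof.
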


\begin{proof}
We consider the cases (a)-(g) in \cite[Corollary 8.5]{Griffeth} one by one. First, we translate the notation $b_1,b_2,b_4,b_1',b_2',b_4'$ of \cite[Corollary 8.5]{Griffeth} into statements about numbers of rows and columns. We have $\ct(b_1)+1=\#\{\hbox{col}(\lambda^1)\}$, $\ct(b_1')+1=\#\{\hbox{col}(\lambda^2)\}$, $-\ct(b_4)+1=\#\{\hbox{row}(\lambda^1)\}$, $-\ct(b_4')+1=\#\{\hbox{row}(\lambda^2)\}$, $\ct(b_2)= \#\{\hbox{col}(\lambda^1)\}-\#\{\hbox{row}(\lambda^1)\hbox{ of size }\lambda_1^1\} $, and $\ct(b_2')= \#\{\hbox{col}(\lambda^2)\}-\#\{\hbox{row}(\lambda^2)\hbox{ of size }\lambda_1^2\} $. 

Case (a). The inequalities are:
\begin{align*}&-s\leq \#\{\hbox{columns}(\lambda^1)\}+\#\{\hbox{rows}(\lambda^2)\}-1\leq e-s,\\
&s-e\leq \#\{\hbox{columns}(\lambda^2)\}+\#\{\hbox{rows}(\lambda^1)\}-1\leq s.
\end{align*}
Since both $\#\{\hbox{rows}(\lambda^1)\}\geq 1$ and $\#\{\hbox{rows}(\lambda^2)\}\geq 1$ by the assumption $\lambda^1, \lambda^2\neq \emptyset$, when we add the inequalities we see that $\#\{\hbox{columns}(\ul{\lambda})\}\leq e$. 
Suppose $|\ul{\lambda}^t,\ul{s}\rangle$ is a source vertex in the $\sle$-crystal. Then $\cA(\ul{\lambda})$ is totally $e$-periodic by \cite[Theorem 5.9]{JaconLecouvey}. By Lemma \ref{>=e col} then $\ul{\lambda}$ has at least $e$ columns. So $\ul{\lambda}$ has exactly $e$ columns, but then by Lemma \ref{e col}, $L(\ul{\lambda})$ cannot be finite-dimensional.

Case (b). If $d+\ell c=\frac{1}{2}$ then $\ell=e-s$. We have: 
$$ \#\{\hbox{col}(\lambda^1)\}-\#\{\hbox{row}(\lambda^1)\hbox{ of size }\lambda_1^1\} + \#\{\hbox{row}(\lambda^2)\}\leq e-s.$$
Additionally, the second inequality required to hold is:
$$ \#\{\hbox{col}(\lambda^2)\}+\#\{\hbox{row}(\lambda^1)\}-1\leq s.$$
Adding inequalities, we have $$\#\{\hbox{col}(\ul{\lambda})\}\leq \#\{\hbox{col}(\ul{\lambda})\}+\left(\#\{\hbox{row}(\lambda^1)\}-\#\{\hbox{row}(\lambda^1)\hbox{ of size }\lambda_1^1\}\right) + \left(\#\{\hbox{row}(\lambda^2)\}-1\right)\leq e.$$ 
Now we conclude as in case (a) that $L(\ul{\lambda})$ is not finite-dimensional.

Case (c). If $-d+\ell c=\frac{1}{2}$ then $\ell=s$. The first inequality yields $$\#\{\hbox{col}(\lambda^2)\}-\#\{\hbox{row}(\lambda^2)\hbox{ of size }\lambda_1^2\} + \#\{\hbox{row}(\lambda^1)\}\leq s .$$
The second inequality yields $$\#\{\hbox{col}(\lambda^1)\}+\#\{\hbox{row}(\lambda^2)\}-1\leq e-s.$$ 
Now we add the inequalities and conclude as in case (b) that $L(\ul{\lambda})$ is not finite-dimensional.

Cases (d) and (e). These have $c$ replaced with $-c$ and $\ct(b)$ replaced with $-\ct (b)$ in all conditions, the latter being the same as considering $\lambda^t$ with the original conditions. But to deal with a Fock space with $-c$ instead of $c$ we just take charge $-\ul{s}$ and replace $\ul{\lambda}$ with $\ul{\lambda}^t$ \cite[Section 4.1.4]{Losev}. So cases (d) and (e) reduce to cases (b) and (c). 

Case (f). As in case (b), $d+\ell c=\frac{1}{2}$ implies $\ell=e-s$; and $ -d+mc=\frac{1}{2}$ implies $m=s$. We then have the inequalities:
$$\#\{\hbox{col}(\lambda^1)\}-\#\{\hbox{row}(\lambda^1)\hbox{ of size }\lambda_1^1\} + \#\{\hbox{row}(\lambda^2)\}\leq e-s,$$
$$\#\{\hbox{col}(\lambda^2)\}-\#\{\hbox{row}(\lambda^2)\hbox{ of size }\lambda_1^2\} + \#\{\hbox{row}(\lambda^1)\}\leq s.$$
We add the inequalities and conclude as in case (b).

Case(g). This case actually does not arise in the Fock space set-up: first, our assumption $c=\frac{1}{e}$, $d=-\frac{1}{2}+\frac{s}{e}$ determines $\ell=e-s$ and $m=s$ as in case (f). Then observe that $b_3$ is just ``$b_2$" for $(\lambda^1)^t$, etc, and the inequalities of case (g) if multiplied through by $-1$ become the inequalities of case (f), but for the transpose bipartition $\ul{\lambda}^t=((\lambda^1)^t, (\lambda^2)^t)$. But then we would have $\#\{\hbox{col}(\ul{\lambda}^t)\}\leq -e$, which is nonsense since $e>0$.
\end{proof}

Combining Propositions \ref{rectangle} and \ref{nonempty components}, and applying Remark \ref{switching components}, we conclude that Theorem \ref{unitary+fd} holds.


\section{Parabolic restriction of unitary representations in $\cO_{e,\ul{s}}(n)$}\label{Sec:Proof of Main Thm 2} 
In this section, we will prove Theorem \ref{Main2} and deduce Corollary \ref{Main3}. Theorem \ref{Main2} follows from two propositions. Proposition \ref{crystal ops unitaries} proves that the annihilation operators $\tilde{e}_i$ and $\Upsilon_k^-$ in the $\sle$- and $\slinf$-crystals on $\cF_{e,\ul{s}}^2$ preserve the combinatorial conditions of unitarity, and Proposition \ref{weak res unitaries semisimple}  proves that the functor $\EE_i$ sends unitary irreducible representations to irreducible representations or $0$. From the proof of Proposition \ref{crystal ops unitaries} we deduce Corollaries \ref{unitary supports} and \ref{slinf res}, yielding Corollary \ref{Main3} and the second statement of Theorem \ref{Main2}. Corollary \ref{unitary supports} classifies the unitary representations by their positions in the $\slinf$- and $\sle$-crystals. 

We also discuss some consequences of Corollary \ref{unitary supports} and Theorem \ref{Main2} in Section 
\ref{support rectangle}: many (asymptotically, one could say most) of the unitary representations $L(\ul\la)$ in $\cO_{e,\ul{s}}(n)$ are concentrated in a single component and are simply built from a rectangle $(q^r)$ labeling a finite-dimensional unitary representation as in Theorem \ref{unitary+fd} with a partition $\tau$ stacked underneath it such that $L(\tau)\in\cO_{\frac{1}{e}}(S_{|\tau|})$ is unitary. The supports of these types of unitary irreducible representations are then just read off from the supports of its two pieces, i.e. the support of such a unitary representation $L(\ul\la)$ is $B_{|\ul\la|}\mathfrak{h}^{W'}$ where $W'=B_{rq}\times W^{''}$ such that $W''$ yields the support of $L(\tau)$.

\subsection{The crystals and the unitarity conditions} As previously, let $(e,\ul{s})$ be a parameter for the level $2$ Fock space, so $e\geq 2$ is an integer and $\ul{s}\in\mathbb{Z}^2$. Let $\cO_{e,\ul{s}}(n)$ be the category $\cO$ of the rational Cherednik algebra of $B_n$ with parameters $c=\frac{1}{e}$ and $d=-\frac{1}{2}+\frac{s}{e}$ where $s=s_2-s_1$ and $\cO_{e,\ul{s}}=\bigoplus_{n\geq 0}\cO_{e,\ul{s}}(n)$. We can and will assume that $s_1=0$ and $s_2=s$. Define $\cU_{e,\ul{s}}^2\subset\cP^2$ to be the set of all bipartitions $\ul\la$ such that $L(\ul\la)\in\cO_{e,\ul{s}}$ is unitary. Later during discussion of the results we will start casually referring to elements of $\cU_{e,\ul{s}}^2$ as ``unitary bipartitions." During the proof we will be more formal. 
Recall that we identify $\ul{\lambda}=(\lambda^1,\lambda^2)\in\mathcal{P}^2$ with the charged bipartition $|\ul{\lambda}^t,\ul{s}\rangle\in\cF_{e,\ul{s}}^2$, where $\ul{\lambda}^t=((\lambda^1)^t,(\lambda^2)^t)$.

The combinatorial conditions for unitarity of $L(\ul\la)$ are given by \cite[Corollaries 8.4 and 8.5]{Griffeth} in terms of certain boxes $b_i$ in the Young diagram of $\ul\la$. In order to compute the crystals, we will also consider $\la^t$ with boxes $b_i^t$ corresponding to the boxes $b_i$ under the transpose map. 
 In Case (i) of our proofs we will consider $\ul\la=(\la,\emptyset)$ as in \cite[Corollary 8.4]{Griffeth}. In this case, particular boxes are marked in $\la$ and $\la^t$ as follows.
 \[
\la=\quad \begin{ytableau}
 *(white)& *(white)& *(white)& *(white)&b_1\\
 *(white)& *(white)& *(white)& *(white)&*(white)\\
 *(white)& *(white)& *(white)& *(white)&b_2\\
 *(white)& *(white)& b_3\\
 *(white)& *(white)& *(white)\\
 *(white)& *(white)& b_4\\
 b_5& *(white)\\
 \end{ytableau}
 \qquad\qquad
 \la^t=\quad\begin{ytableau}
*(white)& *(white)& *(white)& *(white)& *(white)& *(white)&b_5^t\\
*(white)& *(white)& *(white)& *(white)& *(white)& *(white)& *(white)\\
*(white)& *(white)& *(white)& b_3^t&*(white)& b_4^t\\
*(white)& *(white)& *(white)\\
b_1^t & *(white)& b_2^t\\
 \end{ytableau}
\]
The box $b_1$ is the box of largest content in $\la$, $b_2$ is the removable box of largest content, $b_4$ is the removable box of second-largest content, $b_3$ is the largest box in the same column as $b_4$ such that the vertical strip from $b_3$ to $b_4$ can be removed from $\la$, and $b_5$ is the box of smallest content in $\la$. Thus $b_1^t$ is the box of smallest content in $\la^t$, $b_5^t$ is the box of largest content in $\la^t$, and so on.

In Case (ii) of our proofs we will consider $\ul\la=(\la^1,\la^2)$ where $\la^1\neq \emptyset$ and $\la^2\neq \emptyset$, as in \cite[Corollary 8.5]{Griffeth}. In this case, particular boxes are marked in $\ul\la$ and $\ul\la^t$ as follows (the box $b_3$ is omitted because it will not be needed):
\[
\ul\la=(\la^1,\la^2)= \quad\begin{ytableau}
*(white)& *(white)& *(white)&b_1\\
*(white)& *(white)& *(white)&*(white)\\
*(white)& *(white)& *(white)&b_2\\
*(white)& *(white)& *(white)\\
b_4 & *(white)& *(white)\\
\end{ytableau}\quad,
\quad
\begin{ytableau}
*(white)& *(white)&b_1'\\
*(white)& *(white)& *(white)\\
*(white)& *(white)& *(white)\\
*(white)& *(white)& b_2'\\
*(white)& *(white)\\
b_5'\\
\end{ytableau}
\]
\[
\ul\la^t=((\la^1)^t,(\la^2)^t)= \quad\begin{ytableau}
*(white)& *(white)& *(white)&*(white)& b_4^t\\
*(white)& *(white)& *(white)& *(white)& *(white)\\
*(white)& *(white)& *(white)& *(white)& *(white)\\
b_1^t &*(white)& b_2^t\\
\end{ytableau}\quad,
\quad
\begin{ytableau}
*(white)& *(white)& *(white)& *(white)& *(white)& b_5^t\\
*(white)& *(white)& *(white)& *(white)& *(white)\\
{b_1'}^t&*(white)& *(white)& {b_2'}^t\\
\end{ytableau}
\]
In $\la^1$, $b_1$ is the box of largest content, $b_2$ is the removable box of largest content, $b_4$ is the box of smallest content; in $\la^2$, $b_1'$ is the box of largest content, $b_2'$ is the removable box of largest content, $b_4'$ is the box of smallest content. 

\begin{proposition}\label{crystal ops unitaries}
Let $\ul\la\in\cU_{e,\ul{s}}^2$, that is, suppose that $L(\ul{\lambda})\in\cO_{e,\ul{s}}$ is a unitary irreducible representation. Then
either $\tilde{e}_i(\ul\la^t)=0$ or $(\tilde{e}_i(\ul\la^t))^t\in\cU_{e,\ul{s}}^2$ for all $i\in\mathbb{Z}/e\mathbb{Z}$, and either 
$\Upsilon^-_k(\la^t)=0$ or $(\Upsilon^-_k(\la^t))^t\in\cU_{e,\ul{s}}^2$ for all $k\in\mathbb{N}$. That is, the operators in the $\sle$- and $\slinf$-crystals that remove boxes preserve the combinatorial conditions for unitarity.
\end{proposition}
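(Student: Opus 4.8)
The plan is to verify directly, case by case, that box removal by $\tilde{e}_i$ or strip removal by $\Upsilon^-_k$ carries a bipartition satisfying Griffeth's inequalities \cite[Corollaries 8.4 and 8.5]{Griffeth} to one that again satisfies those inequalities, possibly in a different case. The essential point is that these conditions are closed inequalities among the row- and column-counts of the two components of $\ul\la$, together with the choice of which equality $d+mc=\frac{1}{2}$ or $-d+mc=\frac{1}{2}$ is active; these quantities are exactly the contents of the marked boxes $b_1,b_2,b_4,b_1',b_2',b_4'$ recorded above and used in the translation carried out in Proposition \ref{nonempty components}. I would split the argument into Case (i), where $\ul\la=(\la,\emptyset)$ and unitarity is governed by \cite[Corollary 8.4]{Griffeth} (the case $\ul\la=(\emptyset,\la)$ then following from Remark \ref{switch}), and Case (ii), where both components are nonempty and unitarity is governed by \cite[Corollary 8.5]{Griffeth}; in each case I would treat $\tilde{e}_i$ and $\Upsilon^-_k$ separately.

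For the $\sle$-operators, the first step is to locate the good removable $i$-box of $|\ul\la^t,\ul{s}\rangle$ using the JMMO rule, recording which corner of $\ul\la$ is deleted and hence which of the counts $\#\{\mathrm{col}(\la^j)\}$, $\#\{\mathrm{row}(\la^j)\}$ drops by one. Since removal of a single box changes at most one such count by one, most of Griffeth's inequalities are either unaffected or made easier (the upper bounds); the only genuine danger is a lower bound, or an active equality, that a removal would violate. Here the good-box rule does the work: I would show, using the cancellation mechanism together with Lemma \ref{min cancellation pair}, that precisely those removals which would break a tight condition correspond to a removable box cancelled by an addable box of the appropriate charged content, so that $\tilde{e}_i$ in fact acts by $0$ and there is nothing to prove. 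The remaining, legal, removals are then checked against the surviving inequalities one configuration at a time.

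For the $\slinf$-operators I would work on the abacus $\cA(\ul\la)$, where $\Upsilon^-_k$ is the left shift of the period $\mathrm{Per}^k$ and, on $\ul\la$ itself, deletes a horizontal $e$-strip possibly split between the two components. Because a full period of exactly $e$ boxes is removed, the combined effect on the paired column/row inequalities of each Griffeth case is balanced, and I would verify directly that the defining inequalities are preserved, again invoking the rule that an illegal shift (one whose result is not the $k$th $e$-period of the new abacus) forces $\Upsilon^-_k=0$. The rigidity furnished by Theorem \ref{unitary+fd}, namely that the relevant configurations are built from a rectangle carrying a controlled tail, keeps this bookkeeping finite and tractable.

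The main obstacle will be the boundary cases of \cite[Corollaries 8.4 and 8.5]{Griffeth}, namely those requiring an exact equality $d+mc=\frac{1}{2}$ (cases (c)--(e) of Corollary 8.4 and (f),(g) of Corollary 8.5): there, preservation of unitarity is delicate because a single misplaced box removal would destroy the equality, and one must show that the good-box and good-strip rules forbid exactly such removals. Translating these equalities and the total $e$-periodicity condition on the abacus into the language of charged contents, and then checking the numerous sub-configurations of where the good box or good strip can lie relative to $b_2,b_4,b_2',b_4'$, is the heart of the exhausting case-by-case analysis and is where essentially all of the work lies.
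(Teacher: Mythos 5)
Your plan is correct and follows essentially the same route as the paper's proof: the same division into the cases of \cite[Corollaries 8.4 and 8.5]{Griffeth}, the same use of Lemma \ref{min cancellation pair} to show that any box removal which would violate a tight condition is not a good removable box (so $\tilde{e}_i$ acts by $0$ there), and the same abacus analysis of $\Upsilon_k^-$ anchored by Theorem \ref{unitary+fd}. The one place the paper's mechanism sharpens your heuristic is for $\Upsilon_k^-$: rather than strip removal being ``balanced,'' the unitarity inequalities bound the relevant border strip of $\ul\la^t$ by $e$, so a vertical $e$-strip can be removed at all only when the shape is (essentially) a rectangle of width $e$ or, in Case (ii)(f), when the entire border is such a strip, and these rigid configurations are then dispatched explicitly via the operators $\tilde{a}_{(m)}$.
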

\begin{proof}
We break the proof into two main cases: (i) when $\ul\la=(\la,\emptyset)$ for some $\la\in\cP$, and (ii) when $\ul\la=(\la^1,\la^2)$ with both $\la^1,\la^2\neq \emptyset$. The case that $\ul\la=(\emptyset,\la)$ follows from that for $\ul\la$ as in \cite[Corollary 8.4]{Griffeth}. In Case (i) we check the cases from \cite[Corollary 8.4]{Griffeth} for unitarity of $L(\la,\emptyset)$. In Case (ii) we check the cases from \cite[Corollary 8.5]{Griffeth} for unitarity of $L(\lambda^1,\lambda^2)$.\\


\noindent \ul{\em Case (i): $\ul\la=(\la,\emptyset)$ for some partition $\la$ of $n$.}\\
According to \cite[Corollary 8.4]{Griffeth}, $L((1^n),\emptyset)$ is unitary if and only if either $d\leq\frac{1}{2}$ or $d+\ell c=\frac{1}{2}$ for some $-n+1\leq \ell\leq -1$. If $d\leq \frac{1}{2}$ then this is independent of $n$, so the condition is preserved by removing a box. If $d+\ell c=\frac{1}{2}$ then with Fock space parameters we have $\ell=e-s$, yielding $n-1+e\geq s\geq e+1$. If $n\geq s-e+1$ then the condition is preserved by removing a box. If $n=s-e+1$ then $L(\ul\la,\emptyset)$ is a finite-dimensional unitary representation by Theorem \ref{unitary+fd}, so $\tilde{e}_i$ acts on it by $0$. We conclude that when $L((1^n),\emptyset)$ is unitary, $((1^n),\emptyset))^t=((n),\emptyset)$ is taken by $\tilde{e}_i$ to $((n-1),\emptyset)$ only when $L((1^{n-1}),\emptyset)$ is unitary, and otherwise $\tilde{e}_i$ sends it to $0$. No vertical strip of length bigger than $1$ can be removed from $(1^n)^t=(n)$, so $\Upsilon^-_k((n),\emptyset)=0$ for all $k\in\mathbb{N}$.

From now on, assume $\la\neq (1^n)$.


\ul{\em Case (i)(a)}. Writing \cite[Corollary 8.4, Case (a)]{Griffeth} in terms of Fock space parameters yields: 
	\begin{align*}
	&\ct(b_1)-\ct(b_5)+1\leq e,\\
	&\ct(b_1)\leq e-s.
	\end{align*}
	
\ul{Check that $\tilde{e}_i$ preserves unitarity}.
If $b_1=b_5$, that is if $\la=(1)$, then there is nothing to show, so we assume $b_1\neq b_5$. The operator $\tilde{e}_i$ is actually removing a box from $\la^t$, but let us examine the effect on $\la$. Obviously the inequalities are preserved if the box removed is not $b_1$ or $b_5$. Since $b_1$ is the box of largest content, $\ct(b_1)\geq 0$. If $b_1$ is removed then the box of largest content in $\la\setminus b_1$ has content $\ct(b_1)-1$, and all inequalities are preserved. Since $\ct(b_5)$ is the box of smallest content, $\ct(b_5)\leq 0$. If $b_5$ is removed then the box of smallest content in $\la\setminus b_5$ has content $\ct(b_5)+1$, and all inequalities are preserved. 

\ul{Check that $\Upsilon_k^-$ preserves unitarity}. The quantity $\ct(b_1)-\ct(b_5)+1$ is the maximum hooklength of a box in $\la$, equivalently in $\la^t$. Therefore the only way a vertical strip of length $e$ can be removed from $|(\la^t,\emptyset),\ul{s}\rangle$ is if $\la^t=(1^e)$, that is, if $\la=(e)$. Otherwise, all $\Upsilon^-_k$ act by $0$. Suppose $\la=(e)$. Then $\ct(b_1)=e-1$, so the second inequality is satisfied if and only if $s\leq 1$. If $s\leq 0$ then, considering $|(\la^t,\emptyset),\ul{s}\rangle$, we see that $\Upsilon^-_1(\ul\la^t)=\emptyset$, that is $\Upsilon^-_1$ acts by removing all the boxes of $\la$, while $\Upsilon^-_k(\ul\la^t)=0$ for $k>1$. If $s=1$, then by Theorem \ref{unitary+fd}, $L((e),\emptyset)$ is a unitary finite-dimensional representation, so all $\Upsilon_k^-$ act on $|\ul\la^t,\ul{s}\rangle$ by $0$.
	
	
\ul{\em Case (i)(b)}.
Writing \cite[Corollary 8.4, Case (b)]{Griffeth} in terms of Fock space parameters yields: 
	\begin{align*}
	&\ct(b_2)-\ct(b_5)+1\leq e,\\
	&\ct(b_1)\leq e-s.
	\end{align*}
	
\ul{Check that $\tilde{e}_i$ preserves unitarity}.
By the proof of Case (i)(a), if any box besides $b_2$ is removed then the boxes of the resulting partition satisfy the given inequalities. Now $b_2$ is the removable box of largest content, and if it is not equal to $b_1$ then the removable box of largest content in $\la\setminus b_2$ has content $\ct(b_2)+1$. This could only pose a problem if $\ct(b_2)-\ct(b_5)+1=e$ and $b_2$ is removed from $\la$. However, considering $|\la^t,\ul{s}\rangle$ in order to compute the crystal rule, we see that in this situation $\la^t$ has an addable box to the right of $b_5^t$ of content $-\ct(b_5)+1=-\ct(b_2)+e$. Then $b_2^t$ is not good removable, since it is cancelled either by this addable box or by the addable box in component $2$ of $\ul\la^t$.

\ul{Check that $\Upsilon_k^-$ preserves unitarity}. 
The condition that $\ct(b_2)-\ct(b_5)+1\leq e$ is the same condition that appears for type $A$ unitary representations at parameter $\frac{1}{e}$ as it is presented in \cite{BZGS}. It says that the bottom border of $\la$ has length at most $e$, as illustrated below:
\begin{center}
\ydiagram[*(white)]{5,5,5,3,1,1}*[*(WildStrawberry)]{5,5,5,5,4,2,2}
\end{center}
In $\la^t$ we then have the following picture, with the shaded right border of $\la^t$ being required to have length at most $e$:
\begin{center}
\ydiagram[*(white)]{6,4,4,3,3}*[*(Orange)]{7,7,5,5,4}
\end{center}
The only way a vertical strip of length $e$ can be removed from $\la^t$, then, is if the shaded border strip is a vertical strip of length $e$, that is, if $\la^t$ is a rectangle with $e$ rows. Otherwise all $\Upsilon_k^-$ act by $0$. Now $\la^t$ is a rectangle with $e$ rows if and only if 
$\ct(b_2)-\ct(b_5)+1=e$ and $\ct(b_1)=e-1$. Imposing the condition $\ct(b_1)\leq e-s$, we see that it must hold that $s\leq 1$. 

Let us suppose, then, that $\la=(e^m)$ for some $m\geq 1$, so $\la^t=(m^e)$. The $\beta$-numbers yielding the abacus of $|(\la^t,\emptyset),\ul{s}\rangle$ are $\beta^1=\{m,m-1,\ldots,m-e+2,m-e+1,-e,-e-1,-e-2,\ldots\}$ and $\beta^2=\{s,s-1,s-2,\ldots\}$. The abacus is totally $e$-periodic with $\mathrm{Per}^1$ given by the first $e$ elements of $\beta^1$, as illustrated below for $s=0$, $e=8$, and $m=6$:

\[
\TikZ{[scale=.6]
\draw
(9,2)node[fill,circle,inner sep=.5pt]{}
(9,1)node[fill,circle,inner sep=3pt]{}
(8,2)node[fill,circle,inner sep=.5pt]{}
(8,1)node[fill,circle,inner sep=3pt]{}
(7,2)node[fill,circle,inner sep=.5pt]{}
(7,1)node[fill,circle,inner sep=3pt]{}
(6,2)node[fill,circle,inner sep=.5pt]{}
(6,1)node[fill,circle,inner sep=3pt]{}
(5,2)node[fill,circle,inner sep=.5pt]{}
(5,1)node[fill,circle,inner sep=3pt]{}
(4,2)node[fill,circle,inner sep=.5pt]{}
(4,1)node[fill,circle,inner sep=3pt]{}
(3,2)node[fill,circle,inner sep=3pt]{}
(3,1)node[fill,circle,inner sep=3pt]{}
(2,2)node[fill,circle,inner sep=3pt]{}
(2,1)node[fill,circle,inner sep=3pt]{}
(1,2)node[fill,circle,inner sep=3pt]{}
(1,1)node[fill,circle,inner sep=.5pt]{}
(0,2)node[fill,circle,inner sep=3pt]{}
(0,1)node[fill,circle,inner sep=.5pt]{}
(-1,2)node[fill,circle,inner sep=3pt]{}
(-1,1)node[fill,circle,inner sep=.5pt]{}
(-2,2)node[fill,circle,inner sep=3pt]{}
(-2,1)node[fill,circle,inner sep=.5pt]{}
(-3,2)node[fill,circle,inner sep=3pt]{}
(-3,1)node[fill,circle,inner sep=.5pt]{}
(-4,2)node[fill,circle,inner sep=3pt]{}
(-4,1)node[fill,circle,inner sep=.5pt]{}
(-5,2)node[fill,circle,inner sep=3pt]{}
(-5,1)node[fill,circle,inner sep=3pt]{}
;
\draw[very thick,Orange] plot [smooth,tension=.1]
coordinates{(9,1)(2,1)};
}
\]
As this is true for any $m\geq 1$, shifting $\mathrm{Per}^1$ to the left when $m>1$ produces the first $e$-period of $|((m-1)^e,\emptyset),\ul{s}\rangle$, and therefore describes the action of $\Upsilon^-_1$. In the case $m=1$, shifting $\mathrm{Per}^1$ produces the first $e$-period of the abacus of the empty bipartition if $s\leq 0$, and therefore describes the action of $\Upsilon_1^-$. If $s=1$ then shifting $\mathrm{Per}^1$ to the left when $m=1$ does not describe an edge in the $\slinf$-crystal, and $L((e),\emptyset)$ is a finite-dimensional unitary representation, as given by Theorem \ref{unitary+fd}. See Example \ref{cusp abacus exl} for a picture of this abacus when $e=3$. We conclude that if $s\leq 0$ then $|((m^e),\emptyset),\ul{s}\rangle=\tilde{a}_{(m)}|(\emptyset,\emptyset),\ul{s}\rangle$ for $m\geq 1$, while if $s=1$ then $|((m^e),\emptyset),(0,1)\rangle=\tilde{a}_{(m-1)}|((1^e),\emptyset),(0,1)\rangle$ for $m\geq 2$. And in particular, the operators $\Upsilon^-_k$ preserve the conditions of Case (i)(b).


\ul{\em Case (i)(c)}. 
Writing \cite[Corollary 8.4, Case (b)]{Griffeth} in terms of Fock space parameters yields: 
	\begin{align*}
	&\ct(b_2)<e-s\leq \ct(b_1),\\
	&-\ct(b_5)\leq s.
	\end{align*}

\ul{Check that $\tilde{e}_i$ preserves unitarity}. By assumption, $b_2\neq b_1$ so $b_1$ is not removable.
If any removable box besides $b_5,b_2$ is removed then obviously the inequalities are preserved. If $b_5$ is removed this increases the content of the box of smallest content, hence the inequalities are preserved. If $b_2$ is removed then there could only be a problem if $\ct(b_2)=e-s-1$. But then we are in Case (i)(d) so we refer to the argument in that case.

\ul{Check that $\Upsilon_k^-$ preserves unitarity}. If $\Upsilon^-_k$ does not act on $|(\la^t,\emptyset),\ul{s}\rangle$ by $0$ then $\la^t$ must contain a vertical strip of length $e$ in its right border. This implies $\ct(b_1)\geq e-1$. On the other hand, adding inequalities gives $\ct(b_2)-\ct(b_5)+1\leq e$. As in Case (i)(b), then $\la$ must be a rectangle with $e$ columns. Then $\la$ has $s+1$ rows and $s\geq 1$. If we consider $\mathrm{Per}^1$ of $|(s^e),\emptyset),(0,s)\rangle$, we see that its beads correspond to the $e$ rows of $(s^e)$. Then $\tilde{a}_{(1)}$ applied to $|(s^e),\emptyset),(0,s)\rangle$ adds a box to every row, yielding 
$\tilde{a}_{(1)}|(s^e),\emptyset),(0,s)\rangle=|(\la^t,\emptyset),(0,s)\rangle$. 
Thus $\Upsilon^-_1|(\la^t,\emptyset),(0,s)\rangle=|((e^s),\emptyset)^t,(0,s)\rangle$, which labels a finite-dimensional unitary representation by Theorem \ref{unitary+fd}. All $\Upsilon^-_k$ for $k>1$ act by $0$.


\ul{\em Case (i)(d)}. Writing \cite[Corollary 8.4, Case (d)]{Griffeth} in terms of Fock space parameters yields: 
	\[
	\ct(b_2)=e-s.
	\]
Additionally, if $\la$ is not a rectangle or if $b_4\neq b_5$, then either (a) for some $\ell$ such that $\ct(b_4)\leq \ell<\ct(b_3)$ it holds that $e=\ell-\ct(b_5)+1,$ or (b) $e\geq \ct(b_3)-\ct(b_5)+1$. Combining these last two conditions into one, we have that  if $\la$ is not a rectangle or if $b_4\neq b_5$, then 
	\[
	\ct(b_4)-\ct(b_5)+1\leq e.
	\]

Let $b_2^t$ be the box in $\la^t$ corresponding to $b_2$ under the transpose map. Then $\ct(b_2^t)=-\ct(b_2)=s-e$. By Lemma \ref{min cancellation pair}, $b_2^t$ is canceled by the unique addable box in component $2$ of $|\ul\la^t,\ul{s}\rangle$, so $b_2^t$ is not a good removable box of $|\ul\la^t,\ul{s}\rangle$. 

\ul{Check that $\tilde{e}_i$ and $\Upsilon_k^-$ preserve unitarity when $b_4=b_5$}. 
If $\lambda=(q^r)$ is a rectangle then $b_2=b_4=b_5$ and the unique removable box has content $s-e$. Then by Theorem \ref{unitary+fd}, $L(\ul\la)$ is finite-dimensional, and therefore $|\ul\la^t,\ul{s}\rangle$ is sent to $0$ by all $\tilde{e}_i$ and $\Upsilon_k^-$. If $b_4=b_5\neq b_2$ then $\la=(q^r,1^j)$ for some positive integers $q,r$ such that $r-q=s-e$ and for some $j\in\mathbb{N}$. For any $i\in\mathbb{Z}/e\mathbb{Z}$, we see that $b_5^t$ is always a good removable $i$-box of $|\ul\la,\ul{s}\rangle$ because it has the largest content of any removable box in $\la^t$, has larger content than any addable box in $\la^t$ of the same residue mod $e$, and the unique addable box in component $2$ cannot cancel it because that addable box cancels the box $b_2^t$. We deduce that $\tilde{e}_i$ acts by $0$ unless $i=-\ct(b_5)\mod e$ in which case $\tilde{e}_i$ removes $b_5^t$ from $|\ul{\la}^t,\ul{s}\rangle.$ This implies that $|\ul\la^t,\ul{s}\rangle=\tilde{f}_{r+j-1}\ldots\tilde{f}_{r+2}\tilde{f}_{r+1}\tilde{f}_{r}|(r^q),\ul{s}\rangle$ where the indices are taken mod $e$. For example, when $e=3$, $\ul{s}=(0,4)$, $r=5$, $q=4$, and $\la=(4^5,1^4)$ we have:
\begin{center}
$|\ul\la^t,\ul{s}\rangle=\tilde{f}_2\tilde{f}_1\tilde{f}_0\tilde{f}_2|((r^q),\emptyset),\ul{s}\rangle=$\quad\ydiagram[*(white)]{5,5,5,5}*[*(CornflowerBlue)]{6,5,5,5}*[*(CarnationPink)]{7,5,5,5}*[*(LimeGreen)]{8,5,5,5}*[*(CornflowerBlue)]{9,5,5,5}\quad,\quad  \large{$\emptyset$}
\end{center}
Since $|((r^q),\emptyset),\ul{s}\rangle$ is a source vertex in both the $\sle$- and $\slinf$-crystals, we have identified the position of $|\ul\la^t,\ul{s}\rangle $ in the two crystals: it has depth $0$ in the $\slinf$-crystal, and therefore $\Upsilon_k^-$ acts on it by $0$ for all $k\geq 1$.

\ul{Check that $\tilde{e}_i$ preserves unitarity when $b_4\neq b_5$.} Since $b_2^t$ is not a good removable box of $|\ul\la^t,\ul{s}\rangle$, applying $\tilde{e}_i$ preserves the condition $\ct(b_2)=e-s$. The condition $\ct(b_4)-\ct(b_5)+1\leq e$ continues to hold so long as any box is removed except $b_4$. In the case of $b_4$, removing it messes up the inequality if $\ct(b_4)-\ct(b_5)+1=e$. But then $-\ct(b_5)+1=-\ct(b_4)+e$, so $b_4^t$ is not a good removable box of $|\ul\la^t,\ul{s}\rangle$ (since $\la^2=\emp$). We illustrate this situation with a picture of such a $\la^t$ for $e=2$: 
\begin{ytableau}
*(white)&*(white)&*(white)&b_5^t&\none[\color{Thistle}\bullet]\\
*(white)&*(white)&*(white)&*(Thistle)b_4^t\\
*(white)&b_2^t\\
\end{ytableau}.\\

\ul{Check that $\Upsilon_k^-$  preserves unitarity when $b_4\neq b_5$.} Since $\ct(b_4)-\ct(b_5)+1\leq e$ and $b_4$ is the second-largest removable box of $\la$, there are at most two vertical $e$-strips that can be removed from $\la^t$. We illustrate this scenario for $e=2$:
\[
\begin{ytableau}
*(white)&*(white)&*(white)&*(Orange)b_5^t\\
*(white)&*(white)&*(white)&*(Orange)b_4^t\\
*(white)&*(white)\\
*(white)&*(white)\\
*(white)&*(Orange)\\
*(white)&*(Orange)b_2^t\\
\end{ytableau}\quad,\quad \large{\emptyset}
\qquad\leftrightarrow\qquad
\TikZ{[scale=.6]
\draw
(8,0)node[fill,circle,inner sep=.5pt]{}
(8,-1)node[fill,circle,inner sep=.5pt]{}
(7,0)node[fill,circle,inner sep=.5pt]{}
(7,-1)node[fill,circle,inner sep=3pt]{}
(6,0)node[fill,circle,inner sep=.5pt]{}
(6,-1)node[fill,circle,inner sep=3pt]{}
(5,0)node[fill,circle,inner sep=.5pt]{}
(5,-1)node[fill,circle,inner sep=.5pt]{}
(4,0)node[fill,circle,inner sep=.5pt]{}
(4,-1)node[fill,circle,inner sep=.5pt]{}
(3,0)node[fill,circle,inner sep=.5pt]{}
(3,-1)node[fill,circle,inner sep=3pt]{}
(2,0)node[fill,circle,inner sep=.5pt]{}
(2,-1)node[fill,circle,inner sep=3pt]{}
(1,0)node[fill,circle,inner sep=3pt]{}
(1,-1)node[fill,circle,inner sep=3pt]{}
(0,0)node[fill,circle,inner sep=3pt]{}
(0,-1)node[fill,circle,inner sep=3pt]{}
(-1,0)node[fill,circle,inner sep=3pt]{}
(-1,-1)node[fill,circle,inner sep=.5pt]{}
(-2,0)node[fill,circle,inner sep=3pt]{}
(-2,-1)node[fill,circle,inner sep=.5pt]{}
(-3,0)node[fill,circle,inner sep=3pt]{}
(-3,-1)node[fill,circle,inner sep=3pt]{}
(1,-2)node[]{{\small$s$}}
(-1,-2)node[]{{\small$s-e$}}
;
\draw[very thick,Orange] plot [smooth,tension=.1]
coordinates{(7,-1)(6,-1)};
\draw[very thick,Orange] plot [smooth,tension=.1]
coordinates{(1,-1)(0,-1)};
}
\]
However, even if $b_2^t$ sits at the bottom of a vertical strip of length $e$, removing that strip never yields an edge in the $\slinf$-crystal. This is because $\ct(b_2^t)=s-e$, so in the abacus associated to $|(\la^t,\emptyset),(0,s)\rangle$ the corresponding chain of $e$ beads on the bottom row of the abacus lies entirely underneath a chain of $e$ beads in the top row of the abacus. Such a chain of $e$ beads can never be shifted to the left by an operator $\Upsilon_k^-$ in the $\slinf$-crystal \cite{GerberN}. It follows that $\Upsilon_k^-$ acts on $|\ul\la^t,\ul{s}\rangle$ by $0$ unless $k=1$ and $\ct(b_4)-\ct(b_5)+1=e$.

Let us examine the scenario that $\ct(b_4)-\ct(b_5)+1=e$ and $b_5^t$, $b_4^t$ are the top and bottom boxes of a vertical strip in the border of $\la^t$. Then $b_2$ and $b_4$ are the only removable boxes of $\la$, so $\la=(q^r,e^m)$ for some positive integers $q,r,m$ satisfying $q>e$ and $r-q=s-e$. We then have $|(\la^t,\emptyset),(0,s)\rangle=\tilde{a}_{(m)}|((r^q),\emptyset),(0,s)\rangle$. In particular, $\Upsilon^-_1|\ul\la^t,\ul{s}\rangle=|((r+m-1)^e,r^{q-e}),(0,s)\rangle$ and corresponds to removing the bottom row of $e$ boxes from $\la$ yielding $(q^r,e^{m-1})$. The partition $(q^r,e^{m-1})$ again falls under Case (i)(d).


\ul{\em Case (i)(e)}.
Writing \cite[Corollary 8.4, Case (e)]{Griffeth} in terms of Fock space parameters yields: 
	\begin{align*}
	&\ct(b_2)+1\leq e-s\leq \ct(b_1)-1,\\
	&\ct(b_2)-\ct(b_5)+1\leq e\leq e-s-\ct(b_5).
	\end{align*}

\ul{Check that $\tilde{e}_i$ preserves unitarity}. If $b_1$ is removed and $\ct(b_1)=e-s+1$ then the resulting partition belongs to Case (i)(b). If $b_5$ is removed and $s=-\ct(b_5)$ then the resulting partition belongs to Case (i)(c). If $b_2$ is removed, $\ct(b_2)+1=e-s$, and $s\neq-\ct(b_5)$, then the resulting partition belongs to Case (i)(d).
Finally, as in Case (i)(b), if $\ct(b_2)-\ct(b_5)+1= e$ then $b_2^t$ is not a good removable box of $|\ul\la^t,\ul{s}\rangle$. 

\ul{Check that $\Upsilon_k^-$ preserves unitarity}.
As in Case (i)(b), the $\slinf$-crystal operators $\Upsilon_k^-$ act by $0$ unless $\la$ is a rectangle of width $e$. This follows from the requirement that $\ct(b_2)-\ct(b_5)+1\leq e$. So suppose that $\la=(e^m)$. Then $\ct(b_1)-1=e-1$, so $s\geq 1$. From the requirement $\ct(b_2)<e-s$ we deduce that $m>s$.  Reasoning as in Part (i)(d), we compute that $|(\la^t,\emptyset),(0,s)\rangle=\tilde{a}_{m-s}|((s^e),\emptyset),(0,s)\rangle$. Thus successively removing the bottom row of $\la$ describes the effect of repeated iterations of $\Upsilon_1^-$ on $\la$ until the rectangle $(e^s)$ is reached. In particular Case (i)(e) is preserved by applying $\Upsilon_1^-$ if $m>s+1$, while for $m=s+1$, applying $\Upsilon_1^-$ lands the resulting partition in Case (i)(d). All other $\Upsilon^-_k$ act by $0$ since $|(\la^t,\emptyset),(0,s)\rangle=\tilde{a}_{m-s}|((s^e),\emptyset),(0,s)\rangle$.

We illustrate the situation $|(\la^t,\emptyset),(0,s)\rangle=\tilde{a}_{m-s}|((s^e),\emptyset),(0,s)\rangle$ in the last paragraph when $e=3$, $s=2$, and $m=6$. The $e$-period $\mathrm{Per}^1$ can repeatedly shift to the left yielding the iterated action of $\Upsilon_1^-$ until it ``docks" under the top row of beads. Thus it can shift to the left under the action of $\Upsilon_1^-$ exactly four times. Once $\mathrm{Per}^1$ is completely covered by beads in the top row, shifting it to the left no longer describes an edge in the $\slinf$-crystal, thus $(\Upsilon^-_1)^5$ acts by $0$. As is consistent with Theorem \ref{unitary+fd}.

\begin{align*}
\TikZ{[scale=.6]
\draw
(10,2)node[fill,circle,inner sep=.5pt]{}
(10,1)node[fill,circle,inner sep=.5pt]{}
(9,2)node[fill,circle,inner sep=.5pt]{}
(9,1)node[fill,circle,inner sep=3pt]{}
(8,2)node[fill,circle,inner sep=.5pt]{}
(8,1)node[fill,circle,inner sep=3pt]{}
(7,2)node[fill,circle,inner sep=.5pt]{}
(7,1)node[fill,circle,inner sep=3pt]{}
(6,2)node[fill,circle,inner sep=.5pt]{}
(6,1)node[fill,circle,inner sep=.5pt]{}
(5,2)node[fill,circle,inner sep=3pt]{}
(5,1)node[fill,circle,inner sep=.5pt]{}
(4,2)node[fill,circle,inner sep=3pt]{}
(4,1)node[fill,circle,inner sep=.5pt]{}
(3,2)node[fill,circle,inner sep=3pt]{}
(3,1)node[fill,circle,inner sep=.5pt]{}
(2,2)node[fill,circle,inner sep=3pt]{}
(2,1)node[fill,circle,inner sep=.5pt]{}
(1,2)node[fill,circle,inner sep=3pt]{}
(1,1)node[fill,circle,inner sep=.5pt]{}
(0,2)node[fill,circle,inner sep=3pt]{}
(0,1)node[fill,circle,inner sep=3pt]{}
;
\draw[very thick,Orange] plot [smooth,tension=.1]
coordinates{(9,1)(7,1)};
}
\qquad
&\stackrel{(\Upsilon_1^-)^4}{\rightsquigarrow}
\qquad
\TikZ{[scale=.6]
\draw
(10,2)node[fill,circle,inner sep=.5pt]{}
(10,1)node[fill,circle,inner sep=.5pt]{}
(9,2)node[fill,circle,inner sep=.5pt]{}
(5,1)node[fill,circle,inner sep=3pt]{}
(8,2)node[fill,circle,inner sep=.5pt]{}
(4,1)node[fill,circle,inner sep=3pt]{}
(7,2)node[fill,circle,inner sep=.5pt]{}
(3,1)node[fill,circle,inner sep=3pt]{}
(6,2)node[fill,circle,inner sep=.5pt]{}
(6,1)node[fill,circle,inner sep=.5pt]{}
(5,2)node[fill,circle,inner sep=3pt]{}
(9,1)node[fill,circle,inner sep=.5pt]{}
(4,2)node[fill,circle,inner sep=3pt]{}
(8,1)node[fill,circle,inner sep=.5pt]{}
(3,2)node[fill,circle,inner sep=3pt]{}
(7,1)node[fill,circle,inner sep=.5pt]{}
(2,2)node[fill,circle,inner sep=3pt]{}
(2,1)node[fill,circle,inner sep=.5pt]{}
(1,2)node[fill,circle,inner sep=3pt]{}
(1,1)node[fill,circle,inner sep=.5pt]{}
(0,2)node[fill,circle,inner sep=3pt]{}
(0,1)node[fill,circle,inner sep=3pt]{}
;
\draw[very thick,Orange] plot [smooth,tension=.1]
coordinates{(5,1)(3,1)};
}\\
\ydiagram[*(white)]{2,2,2}*[*(Apricot)]{5,5,5}*[*(Orange)]{6,6,6}\quad,\quad\large{\emptyset}
\qquad
&\stackrel{(\Upsilon_1^-)^4}{\rightsquigarrow}
\qquad
\ydiagram[*(white)]{2,2,2}\quad,\quad\large{\emptyset}
\end{align*}\\


\noindent \ul{\em Case (ii): $\ul\la=(\la^1,\la^2)$ with both $\la^1,\la^2\neq \emptyset$.}\\
 It is sufficient to check the conditions for unitarity in cases (a), (b), and (f) of \cite[Corollary 8.5]{Griffeth}. The other cases (c), (d), (e), and (g) are obtained from these by switching parameter $c$ with $-c$ or parameter $d$ with $-d$, which have the effect of sending $\ul\la $ to $\ul\la^t$ or to $(\la^2,\la^1)$, respectively.


\ul{\em Case (ii)(a)}. Writing \cite[Corollary 8.5, Case (a)]{Griffeth} in terms of Fock space parameters yields: 
	\begin{align*}
	&-s\leq \ct(b_1)-\ct(b_4')+1\leq e-s,\\
	&s-e\leq \ct(b_1')-\ct(b_4)+1\leq s.
	\end{align*}
	
\ul{Check that $\tilde{e}_i$ preserves unitarity}.
Note that $\ct(b_1),\ct(b_1')\geq 0$ and $\ct(b_4),\ct(b_4')\leq 0$ always and thus $\ct(b_1)-\ct(b_4')+1> 0$ and $\ct(b_1')-\ct(b_4)+1>0$ always. The two right-hand inequalities then imply $0<s<e$, making the two left-hand inequalities irrelevant. Removing any box of $\ul\la$, the quantities $\ct(b_1)-\ct(b_4')+1$ and $\ct(b_1')-\ct(b_4)+1$ either stay the same or decrease by $1$. It follows that $(\tilde{e}_i(\ul\la^t))^t$ also satisfies the conditions of Case (a) so long as both of its components are non-empty. 

Next, suppose $\la^2=(1)$ and $\tilde{e}_i$ removes the unique box of $(\la^2)^t$. Then $b_1'=b_4'$ and $\ct(b_1')=0$. Then $\ct(b_1)<e-s$ and $\ct(b_1)-\ct(b_4)+1<e$, so \cite[Corollary 8.4, Case (a)]{Griffeth} holds for $(\tilde{e}_i(\ul\la)^t)^t$. On the other hand, if $\la^1=(1)$ then switching $\la^1$ with $\la^2$ and setting $s'=e-s$ (i.e. replacing $d$ by $-d$), we make the same argument.

\ul{Check that $\Upsilon_k^-$ preserves unitarity}. Adding the two inequalities, we have $(\ct(b_1)-\ct(b_4)+1)+(\ct(b_1')-\ct(b_4')+1)\leq e$. The quantity $\ct(b_1)-\ct(b_4)+1$ is the maximum hooklength of a box in $\la^1$ and $\ct(b_1')-\ct(b_4')+1$ is the maximum hooklength of a box in $\la^2$. It follows that the abacus of $|\ul\la^t,\ul{s}\rangle$ has an $e$-period that can shift to the left only if $(\la^1)^t$ and $(\la^2)^t$ are columns, in which case $b_1=b_2$, $b_1'=b_2'$, and we are in Case (ii)(f). Otherwise $\Upsilon_k^-$ acts by $0$ for all $k\in\mathbb{N}$. We now refer to the proof of Case (ii)(f) in the situation that  $(\la^1)^t$ and $(\la^2)^t$ are columns and $|\ul\la|=e$.

	
\ul{\em Case (ii)(b)}. Writing \cite[Corollary 8.5, Case (b)]{Griffeth} in terms of Fock space parameters yields: 
	\begin{align*}
	&\ct(b_2)-\ct(b_4')+1\leq e-s\leq \ct (b_1)-\ct(b_4')+1,\\
	&\ct(b_1')-\ct(b_4)+1\leq s.
	\end{align*}

\ul{Check that $\tilde{e}_i$ preserves unitarity}.
	First, we assume that both components of $\tilde{e}_i(\ul\la^t)$ are nonempty.
Removing any box of $\ul\la$ preserves the bottom inequality, removing any box different from $b_2$ preserves the top left inequality, and removing any box different from $b_1$ or $b_4'$ preserves the top right inequality. If the top right inequality is not preserved then $e-s=\ct(b_1)-\ct(b_4')+1$ and $\tilde{e}_i$ removes $b_1^t$ or $b_4'^t$ from $\ul\la^t$, but then Case (ii)(a) holds. The top left inequality must be preserved unless $\ct(b_2)-\ct(b_4')+1=e-s$ and $b_2\neq b_1$. Writing $\cct(b_4'^t)+1=-\ct(b_4')+s+1=-\ct(b_2)+e=\cct(b_2^t)+e$, we see that by Lemma \ref{min cancellation pair}, $b_2^t$ is not a good removable box. Therefore $\tilde{e}_i$ preserves the top left inequality.

Suppose $\la^2=(1)$ and that $\tilde{e}_i$ removes the unique box of $\la^2=(\la^2)^t$. Then $\ct(b_4')=\ct(b_1')=0$, so $\ct(b_2)+1\leq e-s\leq \ct(b_1)+1$ and $-\ct(b_4)+1\leq s$. Then $(\la^1,\emptyset)=(\tilde{e}_i(\ul\la^t))^t$ satisfies \cite[Corollary 8.4, Case (c)]{Griffeth} if $e-s\leq \ct(b_1)$. If $e-s=\ct(b_1)+1$, then $\ct(b_1)<e-s$ and $\ct(b_2)-\ct(b_4)+1<e$, so $(\la^1,\emptyset)$ then satisfies \cite[Corollary 8.4, Case (b)]{Griffeth}.

Suppose $\la^1=(1)$ and that $\tilde{e}_i$ removes the unique box of $\la^1=(\la^1)^t$. Then $\ct(b_4)=\ct(b_1)=\ct(b_2)=0$, so $-\ct(b_4')+1= e-s$ and $\ct(b_1')+1\leq s$. Since $L(\emptyset,\lambda^2)$ for parameters $(c,d)$ is unitary if and only if $L(\lambda^2,\emptyset)$ is unitary for parameters $(c,-d)$, we see that $L(\emptyset,\lambda^2)$ is unitary by \cite[Corollary 8.4, Case (a)]{Griffeth}.

\ul{Check that $\Upsilon_k^-$ preserves unitarity}. The same analysis holds as in Case (ii)(a). The abacus $|\ul\la^t,\ul{s}\rangle$ has an $e$-period that can shift to the left only if $|\ul\la|=e$ and $(\la^1)^t$, $(\la^2)^t$ are both columns. This situation falls under Case (ii)(f). Otherwise, $\Upsilon_k^-$ acts by $0$ for all $k\in\mathbb{N}$.


\ul{\em Case (ii)(f)}. Writing \cite[Corollary 8.5, Case (f)]{Griffeth} in terms of Fock space parameters yields: 
	\begin{align*}
	&\ct(b_2)-\ct(b_4')+1\leq e-s\leq \ct(b_1)-\ct(b_4')+1,\\
	&\ct(b_2')-\ct(b_4)+1\leq s\leq \ct(b_1')-\ct(b_4)+1.
	\end{align*}

\ul{Check that $\tilde{e}_i$ preserves unitarity}. 		First, we assume that both components of $\tilde{e}_i(\ul\la^t)$ are nonempty.
The two right-hand inequalities are preserved by removing any boxes except when $e-s= \ct(b_1)-\ct(b_4')+1$ or $s= \ct(b_1')-\ct(b_4)+1.$ However, in either of those cases then we are in Case (ii)(b) or \cite[Corollary 8.5, Case (c)]{Griffeth}, which follows from Case (ii)(b) as discussed at the beginning of the proof. We have the same analysis of the top left inequality as in Case (ii)(b). A similar argument applies to the bottom left inequality: it must be preserved by removing any box unless $b_2'\neq b_1'$ and $\cct(b_2'^t)=-\ct(b_2')+s=-\ct(b_4)+1=\cct(b_4^t)+1$. But in this case, $b_2'^t$ is not a good removable box of $|\ul\la^t,\ul{s}\rangle$ by Lemma \ref{min cancellation pair}.
Therefore $\tilde{e}_i$ always preserves the bottom left inequality.

The case where $\la^j=(1)$ and that box is removed is the same as the argument in Case (ii)(b), except that \cite[Corollary 8.4, Case (b)]{Griffeth} now arises when $\la^1=(1)$ and $s<\ct(b_1')+1$.

\ul{Check that $\Upsilon_k^-$ preserves unitarity}. Adding the two leftmost inequalities for Case (ii)(f), the number of  boxes in the right border strip of $\ul\la^t$ as shaded in the diagram below, is at most $e$:
\begin{center}
\ydiagram[*(white)]{6,6,4,4}*[*(Orange)]{7,7,7,5}\quad,\quad\ydiagram[*(white)]{3,3,3}*[*(Orange)]{5,4,4}
\end{center}
On the other hand, if $\Upsilon_k^-$ does not act by $0$ on $\ul\la^t$ then it removes some vertical strip of $e$ boxes from the right border strip of $\ul\la^t$, a subset of the shaded region. This can only happen, therefore, if the shaded region is a vertical strip of $e$ boxes, as illustrated below for $e=7$:
\begin{center}
\ydiagram[*(white)]{5,5,5,5}*[*(Orange)]{6,6,6,6}\quad,\quad\ydiagram[*(white)]{2,2,2}*[*(Orange)]{3,3,3}
\end{center} 
We then have that $\la^1$ and $\la^2$ are rectangles, and $\cct(b_4'^t)+1=-\ct(b_4')+s+1=-\ct(b_2)+e=\cct(b_2^t)+e$ and $\cct(b_4^t)+1=-\ct(b_4)+1=-\ct(b_2')+s=\cct(b_2^t)$. By Lemma \ref{min cancellation pair}, the two removable boxes of $\ul\la^t$ are not good removable so $|\ul\la^t,\ul{s}\rangle$ is totally $e$-periodic. The $\beta$-numbers are $\beta^1=\{-\ct(b_4)+1,-\ct(b_4),\ldots,-\ct(b_2)+2,-\ct(b_2)+1,-\ct(b_1)-1,-\ct(b_1)-2,-\ct(b_1)-3,\ldots\}$ and $\beta^2=\{-\ct(b_4')+s+1,-\ct(b_4')+s,\ldots,-\ct(b_2')+s+2,-\ct(b_2')+s+1,-\ct(b_1')+s-1,-\ct(b_1')+s-2,-\ct(b_1')+s-3,\ldots\}$.
In a picture:
\[
\TikZ{[scale=.6]
\draw
(8,2)node[fill,circle,inner sep=3pt]{}
(8,1)node[fill,circle,inner sep=.5pt]{}
(7,2)node[fill,circle,inner sep=3pt]{}
(7,1)node[fill,circle,inner sep=.5pt]{}
(6,2)node[fill,circle,inner sep=3pt]{}
(6,1)node[fill,circle,inner sep=.5pt]{}
(5,2)node[fill,circle,inner sep=.5pt]{}
(5,1)node[fill,circle,inner sep=3pt]{}
(4,2)node[fill,circle,inner sep=.5pt]{}
(4,1)node[fill,circle,inner sep=3pt]{}
(3,2)node[fill,circle,inner sep=.5pt]{}
(3,1)node[fill,circle,inner sep=3pt]{}
(2,2)node[fill,circle,inner sep=3pt]{}
(2,1)node[fill,circle,inner sep=3pt]{}
(1,2)node[fill,circle,inner sep=3pt]{}
(1,1)node[fill,circle,inner sep=.5pt]{}
(0,2)node[fill,circle,inner sep=3pt]{}
(0,1)node[fill,circle,inner sep=.5pt]{}
(-1,2)node[fill,circle,inner sep=3pt]{}
(-1,1)node[fill,circle,inner sep=.5pt]{}
(-2,2)node[fill,circle,inner sep=3pt]{}
(-2,1)node[fill,circle,inner sep=.5pt]{}
(-3,2)node[fill,circle,inner sep=3pt]{}
(-3,1)node[fill,circle,inner sep=.5pt]{}
(-4,2)node[fill,circle,inner sep=3pt]{}
(-4,1)node[fill,circle,inner sep=.5pt]{}
(-5,2)node[fill,circle,inner sep=3pt]{}
(-5,1)node[fill,circle,inner sep=3pt]{}
;
\draw[very thick,Orange] plot [smooth,tension=.1]
coordinates{(8,2)(6,2)(5,1)(2,1)};
}
\]
The only $e$-period which can shift to the left is $\mathrm{Per}^1$ which consists of the $\beta$-numbers recording the non-zero parts of $\ul\la^t$. Shifting $\mathrm{Per}^1$ to the left yields the action of $\Upsilon^-_1$ and  $\Upsilon^-_k$ acts by $0$ for all $k\geq 2$. Meanwhile $\Upsilon^-_1(\ul\la^t)$ replaces every part $(\la^j)^t_i$ of $\ul\la^t$ with $(\la^j)_i^t-1$, i.e. it removes the shaded vertical strip which is the whole right border strip of $\ul\la^t$. If both components of $\Upsilon^-_1(\ul\la^t)$ are nonzero then it satisfies Case (ii)(f), so then $(\Upsilon^-_1(\ul\la^t))^t\in\cU_{e,\ul{s}}^2$. If $(\la^1)^t$ or $(\la^2)^t$ is a column but the other is not, then $\Upsilon^-_1(\ul\la^t)=(\la^t,\emptyset)$ or $(\emptyset,\la^t)$ labels a finite-dimensional unitary representation as analyzed in Section \ref{section rectangle}. If both $(\la^1)^t$ and $(\la^2)^t$ are columns then $\Upsilon^-_1(\ul\la^t)=(\emptyset,\emptyset)$, which labels the trivial representation of $\mathbb{C}=H_{c,d}(B_0)$.
\end{proof}

\subsection{Classification of the supports of the unitary representations} 

For $\ul\mu\in\cP^2$, set $\tilde{a}_{(0)}\ul\mu=\ul\mu$. For $I=(i_r,i_{r-1},\ldots,i_1)$ a sequence of elements $i_j\in\mathbb{Z}/e\mathbb{Z}$ set $\tilde{f}_I=\tilde{f}_{i_r}\tilde{f}_{i_{r-1}}\ldots \tilde{f}_{i_1}$.
\begin{corollary}\label{unitary supports}
Let $\ul\la\in\cU^2_{e,\ul{s}}$. Then $|\ul\la^t,\ul{s}\rangle\in\cF_{e,\ul{s}}^2$ has one of the following six descriptions as a vertex in the $\sle$- and $\slinf$-crystals on $\cF_{e,\ul{s}}^2$:\footnote{To be pedantic but completely precise, $\ul\la^t$ satisfies exactly one of the six possibilities unless $\ul\la^t$ has depth $0$ in both the $\sle$- and $\slinf$-crystals, in which case $\ul\la^t$ satisfies either (1) and (2), or (3) and (4), or (5) and (6), taking $I$ to be the empty sequence.}
\begin{enumerate}
\item $\ul\la^t=\tilde{f}_I(\emptyset,\emptyset)$ for an appropriate sequence $I$ of elements in $\mathbb{Z}/e\mathbb{Z}$,\footnote{We do not address which sequences $I$ may occur.}
\item  $\ul\la^t=\tilde{a}_{(m)}(\emptyset,\emptyset)$ for some $m\in\mathbb{N}\cup\{0\}$,
\item  $\ul\la^t=\tilde{f}_I((r^q),\emptyset)$ for an appropriate sequence $I$ of elements in $\mathbb{Z}/e\mathbb{Z}$  and some $r,q\in\mathbb{N}$ satisfying $r-q=s-e$,
\item $\ul\la^t=\tilde{a}_{(m)}((r^q),\emptyset)$  for some $m\in\mathbb{N}\cup\{0\}$ and some $r,q\in\mathbb{N}$ satisfying $r-q=s-e$,
\item $\ul\la^t=\tilde{f}_I(\emptyset,(r^q))$ for an appropriate sequence $I$ of elements in $\mathbb{Z}/e\mathbb{Z}$  and some $r,q\in\mathbb{N}$ satisfying $r-q=-s$,
\item  $\ul\la^t=\tilde{a}_{(m)}(\emptyset,(r^q))$  for some $m\in\mathbb{N}\cup\{0\}$ and some $r,q\in\mathbb{N}$ satisfying $r-q=-s$.
\end{enumerate}
Moreover, for a fixed $n$, all these situations are realized by some $\ul\la\in\cU_{e,\ul{s}}^2(n)$ when it is numerically possible, i.e.: for any $n$, (1) occurs; for any $n$ divisible by $e$, (2) occurs; whenever $rq\leq n$ and $r-q=s-e$, (3) occurs, and if $n-rq$ is also divisible by $e$ then (4) occurs; whenever $rq\leq n$ and $r-q=-s$, (5) occurs, and if $n-rq$ is also divisible by $e$ then (6) occurs. In particular, Corollary \ref{Main3} holds.
\end{corollary}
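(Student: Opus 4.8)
The plan is to extract the classification from the case-by-case analysis already performed in the proof of Proposition~\ref{crystal ops unitaries}, organized by the depth of $\ul\la^t$ in the $\slinf$-crystal and anchored by Theorem~\ref{unitary+fd}. First I would establish the \emph{descent}: starting from $\ul\la\in\cU^2_{e,\ul{s}}$ and repeatedly applying the box-removing operators $\tilde{e}_i$ and the strip-removing operators $\Upsilon^-_k$, Proposition~\ref{crystal ops unitaries} keeps us inside $\cU^2_{e,\ul{s}}$, and since every step removes boxes the procedure terminates at a charged bipartition that is simultaneously a source for both crystals. By Theorem~\ref{source vx} this simultaneous source labels a finite-dimensional representation, so by Theorem~\ref{unitary+fd} it is one of $(\emptyset,\emptyset)$, $((r^q),\emptyset)$ with $r-q=s-e$, or $(\emptyset,(r^q))$ with $r-q=-s$. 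These three are the possible \emph{cores}, and they account for the three flavors (empty, first-component rectangle, second-component rectangle) running through the six descriptions.

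The second step is to pin down how $\ul\la^t$ sits above its core, which is a dichotomy on the $\slinf$-depth. If $\ul\la^t$ has $\slinf$-depth $0$, then since the two crystals commute the operators $\tilde{e}_i$ preserve $\slinf$-depth $0$, so the entire $\sle$-descent stays among $\slinf$-sources and reaches the core using $\sle$-operators alone; reversing it gives $\ul\la^t=\tilde{f}_I(\mathrm{core})$, i.e. description (1), (3), or (5). If instead $\ul\la^t$ has positive $\slinf$-depth, the relevant subcases of the proof of Proposition~\ref{crystal ops unitaries} show by explicit abacus computation that the abacus of $\ul\la^t$ is exactly $\tilde{a}_{(m)}$ applied to a rectangular or empty core: only $\mathrm{Per}^1$ ever shifts, so the governing partition is the single row $(m)$, giving description (2), (4), or (6). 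Because $\tilde{a}_{(m)}$ commutes with each $\tilde{e}_i$ and the core is itself an $\sle$-source, such a $\ul\la^t=\tilde{a}_{(m)}(\mathrm{core})$ is automatically an $\sle$-source as well; this is what guarantees that the two cases are mutually exhaustive (no mixed position occurs) and explains the overlap flagged in the footnote when both depths vanish.

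For the realization statement I would, for each fixed $(e,\ul{s})$ and $n$, exhibit one unitary bipartition of size $n$ whenever the stated numerical conditions hold, drawing the families straight from the proof of Proposition~\ref{crystal ops unitaries}. Description (1) holds for every $n$: the single column $((1^n),\emptyset)$ is unitary when $s\le e$ and its mirror $(\emptyset,(1^n))$ when $s\ge 0$ (covering all $s$), and since its transpose is a single row it has $\slinf$-depth $0$ and descends under the $\tilde{e}_i$ to the empty core. Description (3) is realized by stacking a width-one column beneath the core, $\ul\la=((q^r,1^{\,n-rq}),\emptyset)$, which is Case (i)(d) with $b_4=b_5$ and has $\slinf$-depth $0$; description (4) is realized by stacking rows of length $e$, using Case (i)(d) when the core has more than $e$ columns, Case (i)(e) when it has exactly $e$, and the two-component family of Case (ii)(f) for the remaining thinner cores, in each instance $\ul\la^t=\tilde{a}_{(m)}((r^q),\emptyset)$ having size $rq+em$. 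Descriptions (5) and (6) follow from (3) and (4) by switching components via Remark~\ref{switch}, and (2) is the empty-core specialization $\tilde{a}_{(m)}(\emptyset,\emptyset)$, whose explicit form (recorded at the end of the crystals section) is unitary of size $em$. Finally, Corollary~\ref{Main3} is read off by translating crystal depth into support: by Losev the $\slinf$-depth of $\ul\la^t$ equals the number $m$ of factors $S_e$, the $\sle$-depth equals the number $p$ of group-theoretically trivial factors $S_1$, and the size $rq$ of the core equals the rank of the factor $B_{rq}$, so the six descriptions match the six bullets of Corollary~\ref{Main3} one-to-one.

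The main obstacle I anticipate is the dichotomy of the second paragraph. A priori a unitary $\ul\la$ could sit above its core through an interleaved mixture of $\sle$- and $\slinf$-moves, and ruling this out---showing that positive $\slinf$-depth forces the pure single-row form $\tilde{a}_{(m)}$ over a rectangular core, hence vanishing $\sle$-depth---is exactly what makes the six descriptions exhaustive. This is not a soft consequence of crystal commutativity but rests on the detailed abacus bookkeeping carried out case by case in the proof of Proposition~\ref{crystal ops unitaries}; the realization step is then comparatively routine, the only care being to match every numerically admissible triple $(r,q,m)$, including thin cores with fewer than $e$ columns, to an explicit unitary bipartition.
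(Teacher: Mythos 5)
Your proposal is correct and follows essentially the same route as the paper: the paper's own (very terse) proof likewise reads the six descriptions and the existence examples directly off the case-by-case analysis in the proof of Proposition \ref{crystal ops unitaries} combined with Theorem \ref{unitary+fd}, and obtains Corollary \ref{Main3} by translating crystal depths into supports via Losev. Your write-up merely makes explicit the structure the paper leaves implicit (the descent to a simultaneous source, the $\slinf$-depth dichotomy, and the realization families), including the observation—stated in the paper only after the corollary—that a unitary bipartition cannot have positive depth in both crystals and that its $\slinf$-position is always a single row $(m)$.
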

\begin{proof}
The statements (1) through (6) and examples of existence of all cases follow from the case-by-case examinations in the proof of Proposition \ref{crystal ops unitaries} together with Theorem \ref{unitary+fd}. Theorem \ref{Main3} is the same statement in the language of supports by \cite{Losev}.
\end{proof}

Observe that if $\ul\la\in\cU_{e,\ul{s}}^2$ then $\ul\la^t$ cannot have non-zero depth in both the $\sle$- and $\slinf$-crystals. Moreover, we see that the position $\sigma$ of $\ul\la^t$ in the $\slinf$-crystal is always given by $\sigma=(m)$ for $m\in\mathbb{N}$ if $\ul\la\in\cU_{e,\ul{s}}^2$. 

\begin{corollary}\label{slinf res}
Suppose $L(\ul\mu)\in\cO_{e,\ul{s}}(n+me)$ is a unitary irreducible representation with support equal to $B_{n+me}\mathfrak{h}^{W'}$ for $W'=B_n\times S_e^m$. Then (i) $\Ores^{B_{n+em}}_{B_{n+e(m-1)}\times S_e}L(\ul\mu)$ is irreducible and equal to $L((\tilde{a}_{(m-1)}\ul\la^t)^t)\otimes L(e)$, and (ii)  $\Ores^{B_{n+em}}_{B_n\times S_e^{\times m}}L(\ul\mu)$ is irreducible and equal to $L(\ul\la)\otimes L(e)^{\otimes m}$, where $\ul\la=((q^r),\emp)$  with $r-q=s-e$ or $\ul\la=(\emp,(q^r))$ with $r-q=-s$.
\end{corollary}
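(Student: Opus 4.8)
\emph{The plan.} Write $N=n+em$. The argument has three parts: pinning down the crystal position of $\ul\mu^t$, proving the single-step assertion (i), and deducing (ii) from (i) by induction. The first part is purely combinatorial and rests on Losev's theorem together with Corollary~\ref{unitary supports}. The hypothesis that the support is $B_N\mathfrak{h}^{W'}$ with $W'=B_n\times S_e^m$ says, by \cite{Losev}, exactly that $\ul\mu^t$ has depth $m$ in the $\slinf$-crystal and depth $0$ in the $\sle$-crystal (there are no $S_1$ factors). Feeding these two depths into Corollary~\ref{unitary supports} and using $\ul\mu\in\cU^2_{e,\ul s}$, we land in case (2), (4), or (6), so $\ul\mu^t=\tilde a_{(m)}\ul\la^t$ with $\ul\la$ equal to $(\emp,\emp)$, to $((q^r),\emp)$ with $r-q=s-e$, or to $(\emp,(q^r))$ with $r-q=-s$. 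In each case $\ul\la^t$ is a source vertex in both crystals and $L(\ul\la)$ is finite-dimensional and unitary by Theorems~\ref{unitary+fd} and \ref{source vx}. Since $\tilde a_{(m)}$ preserves total $e$-periodicity, $\ul\mu^t$ is a source in the $\sle$-crystal, which already gives $\Ores^n_{n-1}L(\ul\mu)=\bigoplus_i\EE_iL(\ul\mu)=0$; and the computations of Cases (i)(b)--(e) and (ii)(f) in the proof of Proposition~\ref{crystal ops unitaries} show that among the $\Upsilon^-_k$ only $\Upsilon^-_1$ is nonzero on $\ul\mu^t$, with $\Upsilon^-_1\ul\mu^t=\tilde a_{(m-1)}\ul\la^t=:\ul\mu'^t$.

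For (i), I would use that $\Ores^{B_N}_{B_{N-e}\times S_e}$ is the $\slinf$-analogue of the functor $\EE_i$ from \cite{Shan}: at the level of the categorical Heisenberg action of \cite{ShanVasserot}, with crystal computed in \cite{Losev},\cite{Gerber1},\cite{GerberN}, it is governed by $\Upsilon^-_1$ exactly as $\EE_i$ is governed by $\tilde e_i$. Concretely, on $\Delta(\ul\mu)$ it yields a $\Delta$-filtered module whose standard subquotients correspond to stripping a removable vertical $e$-strip from $\ul\mu^t$ (the $S_e$-factor being the trivial, i.e.\ row, label $(e)$), and on the simple $L(\ul\mu)$ it has simple head and socle equal to $L(\ul\mu')\boxtimes L(e)$ when $\Upsilon^-_1\ul\mu^t\neq 0$. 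Because $\ul\mu^t=\tilde a_{(m)}\ul\la^t$ admits a \emph{unique} leftward-shiftable period $\mathrm{Per}^1$, there is a unique removable vertical $e$-strip, so $L(\ul\mu')\boxtimes L(e)$ occurs with multiplicity one. Running the argument of Lemma~\ref{easy i-res} verbatim—a surjection from the relevant standard onto $L(\ul\mu')\boxtimes L(e)$ that factors through the restriction, combined with the self-duality giving head $\cong$ socle—then forces $\Ores^{B_N}_{B_{N-e}\times S_e}L(\ul\mu)=L(\ul\mu')\boxtimes L(e)$, which is (i).

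Statement (ii) follows by induction on $m$, the base case $m=0$ being trivial under the convention $\tilde a_{(0)}\ul\la^t=\ul\la^t$. By Proposition~\ref{crystal ops unitaries} the module $L(\ul\mu')$ is again unitary, with the same $\ul\la$ but $\slinf$-depth $m-1$; applying the inductive hypothesis and transitivity of parabolic restriction gives $\Ores^{B_N}_{B_n\times S_e^m}L(\ul\mu)=\bigl(\Ores^{B_{N-e}}_{B_n\times S_e^{m-1}}L(\ul\mu')\bigr)\boxtimes L(e)=L(\ul\la)\boxtimes L(e)^{\boxtimes(m-1)}\boxtimes L(e)=L(\ul\la)\otimes L(e)^{\otimes m}$. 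The one genuinely nonelementary input, and the step I expect to be the main obstacle, is the categorical assertion used in (i): that peeling off a single $S_e$ categorifies $\Upsilon^-_1$ with simple head and socle and that the \emph{full} restriction (not merely a Heisenberg summand) is concentrated in the $L(e)$-isotypic piece on the $S_e$ side. I would extract this from the perfect-basis/crystal formalism for the Heisenberg categorical action in \cite{ShanVasserot} and \cite{Losev} rather than prove it from scratch; alternatively one can reach (ii) directly by invoking \cite{BE} to know $\Ores^{B_N}_{W'}L(\ul\mu)$ is a nonzero finite-dimensional (hence cuspidal) module and then using the $\slinf$-crystal only to identify this cuspidal support as $L(\ul\la)\boxtimes L(e)^{\boxtimes m}$ with multiplicity one, recovering (i) afterwards by transitivity.
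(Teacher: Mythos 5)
Your proposal is correct and takes essentially the same route as the paper: the paper likewise uses Corollary \ref{unitary supports} to pin down $\ul\mu^t=\tilde{a}_{(m)}\ul\la^t$ with $\ul\la^t$ a source vertex of the stated form, and then settles both (i) and (ii) at once by citing \cite[Proposition 4.3]{LosevCat} together with adjointness. The ``genuinely nonelementary input'' you flagged as the main obstacle is precisely what that citation supplies, so your plan, with that reference substituted for your sketch (whose intermediate claims, such as head $\cong$ socle for the full restriction functor and the vertical-strip description of its $\Delta$-filtration, are themselves part of the black box rather than things to prove by hand), is the paper's argument.
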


\begin{proof}
By Corollary \ref{unitary supports}, $\ul\mu=(\tilde{a}_{(m)}\ul\la^t)^t$ for some $\ul\la$ as stipulated above, that is, $\ul\mu^t$ has depth $m$ in the $\slinf$ crystal, its position in its connected component of that crystal is given by the partition $(m)$, and the source vertex of its connected component is some $\ul\la$ as above. 
The statement now follows from \cite[Proposition 4.3]{LosevCat} and adjointness.
\end{proof}

The supports of unitary representations are now classified as a set, but we can further unpack the implications of the proof of Proposition \ref{crystal ops unitaries} by identifying which supports occur for which kinds of bipartitions in $\cU_{e,\ul{s}}$, and for which values of $s$. We see a very similar pattern to what happens for the partitions labeling unitary representations in type $A$, as we explain next.

Let us recall the condition for $L(\tau)\in\cO_{\frac{1}{e}}(S_n)$ to be unitary. Take $b_4$ to be the removable box of $\tau$ of largest content, and take $b_5$ to be the box of $\tau$ of smallest content. Then $L(\tau)$ is unitary if and only if $b_4-b_5+1\leq e$. That is to say, the bottom border of $\tau$ as shaded in the diagram below must contain at most $e$ boxes:
\begin{center}
\ydiagram[*(white)]{8,8,4,3}*[*(WildStrawberry)]{8,8,8,5,4}
\end{center}
 This follows from the classification given in \cite{EtingofStoica} and is a rephrasing of the $e$-abacus condition given in \cite[Section 4]{BZGS}. In particular, a partition $\tau$ such that $L(\tau)$ is unitary has at most $e$ columns, and such $\tau$ has $e$ columns if and only if $\tau=(e^m)$ for some $m\in\mathbb{N}$.
 Set $\cO_e=\bigoplus_{n\geq 0}\cO_{\frac{1}{e}}(S_n)$ and set $\cU_e=\{\tau\in\cP\mid L(\tau)\in\cO_e \hbox{ is unitary}\}$.
 
If $\tau$ has less than $e$ columns then $L(\tau)$ always has cuspidal support $(L(\triv),\{1\})$, i.e. $L(\tau)$ has full support \cite{Wilcox}. On the other hand, if $\tau=(e^m)$ then the cuspidal support of $L(\tau)$ is $(L(e)^{\otimes m}, S_e^{\times m})$ \cite{Wilcox}. The $\slinf$- and $\sle$-crystals on the set of all partitions $\cP$, which is equal to the level $1$ Fock space, are defined similarly as on $\cF^2_{e,\ul{s}}$. (However, in level $1$ there is a shift by $1$ in the $\slinf$-crystal as the finite-dimensional representation $L(e)$ has depth $1$, not depth $0$, in the $\slinf$-crystal: $(e)^t=\tilde{a}_{(1)}\emptyset$.) The two crystals describe the supports of $L(\tau)\in\cO_c(S_n)$. 
If $\tau\in\cU_e$ and $\tau\neq (e^m)$ then $\tau^t=\tilde{f}_I\emptyset$ for an appropriate sequence $I$ of $n$ elements of $\mathbb{Z}/e\mathbb{Z}$. If $\tau=(e^m)\in\cU_e$ then $\tau^t=\tilde{a}_{(m)}\emptyset$.

\subsubsection{Unitary bipartitions with type $B$ support}\label{support rectangle} We are going to look at which types of bipartitions in $\cU_{e,\ul{s}}$ can have type $B$ support or a mix of type $B$ and type $A$ supports. 
As we saw in the proof of Proposition \ref{crystal ops unitaries}, taking $I=(r+j,r+j-1,\ldots,r)$ for any $j\geq 1$ and applying $\tilde{f}_I$ to a rectangle labeling a finite-dimensional unitary representation produces an instance of situation (5) above. But more generally, \cite[Corollary 8.4, Case (d)]{Griffeth}, i.e. Case (i)(d) in the proof of Proposition \ref{crystal ops unitaries}, deals with partitions $\la$ such that $\la^t=(r^q)+\tau^t$ for some partition $\tau$ with at most $q-1$ columns. It says that $L(\la,\emptyset)\in\cO_{e,\ul{s}}$ is unitary if and only if $r-q=s-e$ and $L(\tau)\in\cO_{\frac{1}{e}}(S_{|\tau|})$ is unitary. This occurs for any parameter $(e,\ul{s})$.

In \cite[Corollary 8.4, Case (d)]{Griffeth}, $\la$ consists of a rectangle $(q^r)$ labeling a finite-dimensional unitary representation with a type $A$ unitary partition $\tau$ stacked underneath. The cuspidal support of $L(\la,\emptyset)$ is then read directly off of $\la$ from the cuspidal supports of its two pieces: the cuspidal support consists of $L((q^r),\emptyset)\in\cO_{e,\ul{s}}(rq)$ times the cuspidal support of $L_{\frac{1}{e}}(\tau)\in\cO_\frac{1}{e}(S_{|\tau|})$.  If $\la\neq (e^m)$ then we have $(\la^t,\emptyset)=\tilde{f}_I((r^q),\emptyset)$ for an appropriate sequence $I$ of elements of $\mathbb{Z}/e\mathbb{Z}$. It is easy to see that each such sequence $I$ is the shift of a sequence producing $\tau$ in the level $1$ crystal on $\cP$; that is, the sequence $I-r:=(i_{|\tau|}-r,\ldots i_1-r)$ satisfies $\tilde{f}_{I-r}\emptyset=\tau$. 


\begin{example} Let $\ul{s}=(0,3)$ (so $s=3$) and $e=5$. We look at the types of supports occurring in Case (i)(d) of the proof of Proposition \ref{crystal ops unitaries}, i.e. in \cite[Corollary 8.4, Case (d)]{Griffeth}, taking the ``cuspidal rectangle" $(q^r)=(6^4)$ and stacking a ``type $A$ unitary partition" $\tau$ beneath it.  We have $\ct(b_2)=-2=r-q=s-e$.

(a) Let $\ul\la=(\la,\emptyset)=((6^4),\emptyset)$. Then $L(\ul\la)\in\cO_{e,\ul{s}}(24)$ is a finite-dimensional unitary representation.
\[
\la=\ydiagram[*(LightGray)]{6,6,6,6}
\]
The partition $\tau$ below the rectangle is the empty partition. The cuspidal support of $L(\ul\la)$ is $(L(\ul\la),B_{24})$.
The depth of $\ul\la^t=((4^6),\emp)$ in the $\slinf$- and $\widehat{\mathfrak{sl}}_5$-crystals is $0$.

(b) Let $\ul\la=(\la,\emptyset)=((6^4,1^2),\emptyset)$ so that $b_4=b_5$. We break $\la$ into two pieces, the ``cuspidal rectangle" on top of the ``type A unitary partition"  $(1^2)$: 
\[
\la=\ydiagram[*(LightGray)]{6,6,6,6}*[*(Periwinkle)]{6,6,6,6,1,1}
\]
The cuspidal support of $L(\ul\la)$ is $(L((6^4),\emptyset)\otimes L(\triv)^{\otimes 2},B_{24}\times S_1^2)\simeq (L((6^4),\emptyset),B_{24})$ and $\ul\la^t=\tilde{f}_5\tilde{f}_4((4^6),\emptyset)$.

(c) Let $\ul\la=(\la,\emptyset)=((6^4,3^3,2,1),\emptyset)$. Then $\ct(b_4)-\ct(b_5)+1=5=e$. We break $\la$ into two pieces, the ``cuspidal rectangle" on top of the ``type A unitary":
\[
\la=\ydiagram[*(LightGray)]{6,6,6,6}*[*(Periwinkle)]{6,6,6,6,3,3,3,2,1}
\]
We then read off the cuspidal support of $L(\ul\la)$ from the two pieces: it is equal to $(L((6^4),\emptyset), B_{24})$ times the cuspidal support of $L(3^3,2,1)\in\cO_{\frac{1}{5}}(S_{12}).$ Since $\tau$ is not a rectangle of width $5$, we conclude that the cuspidal support of $L(\ul\la)$ is $(L((6^4),\emptyset)\otimes L(\triv)^{\otimes 12},B_{24}\otimes S_1^{ 12})\simeq (L((6^4),\emp),B_{24})$ and that $\ul\la^t=\tilde{f}_I((4^6),\emp)$ for any sequence $I=(i_{12},\ldots, i_1)$ such that $ \tilde{f}_{i_{12}-4}\ldots\tilde{f}_{i_1-4}\emptyset=\tau$ in the $\widehat{\mathfrak{sl}}_5$-crystal on $\cP$.

(d) Let $\ul\la=((6^4,5^3),\emptyset)$. Then $\ct(b_4)-\ct(b_5)+1=5=e$.  We break $\la$ into two pieces, the ``cuspidal rectangle" on top of the ``type A unitary":
\[
\la=\ydiagram[*(LightGray)]{6,6,6,6}*[*(Lavender)]{6,6,6,6,5,5,5}
\]
Since $\tau$ is a rectangle of width $5=e$ and height $3$, $\tau$ has depth $3$ in the $\slinf$-crystal on $\cP$ and is obtained as $\tilde{a}_{(3)}(\emptyset)=\tau^t$. In the $\slinf$-crystal on $\cF_{e,\ul{s}}^2$, we have $\ul\la^t=\tilde{a}_{(3)}((4^6),\emptyset)$. The cuspidal support of $L(\ul\la)$ is $(L((6^4),\emptyset)\otimes L(5)^{\otimes 3},B_{24}\times S_5^3)$.
\end{example}

In Case (i)(e) a similar phenomenon can occur, but with the width of $\tau$ allowed to be equal to the width $q$ of the ``cuspidal rectangle" $(q^r)$ on top of it for $q\leq e$.

\begin{example} Let $e=7$ and $\ul{s}=(0,6)$. Then for any rectangle $(q^r)$ with $q-r=1$, $L((q^r),\emp)\in\cO_{e,\ul{s}}(qr)$ is finite-dimensional and unitary.  Let $\ul\la=((3^6,1),\emptyset)$. The cuspidal support of $L(\ul\la)$ is $(L((3^2),\emp)\otimes L(\triv)^{13},B_{6}\times S_1^{13})\simeq (L((3^2),\emp),B_6)$:
\[
\la=\ydiagram[*(LightGray)]{3,3}*[*(Periwinkle)]{3,3,3,3,3,3,1}
\]
\end{example}

Next, let us discuss when $L(\ul\la)$, $\ul\la=(\la^1,\la^2)\in\cU_{e,\ul{s}}$ with both $\la^1,\la^2$ nonempty, can have part of its support of type $B$. 
First, consider the case that the depth of $\ul\la^t$ in the $\sle$-crystal is nonzero (so that its depth in the $\slinf$-crystal is $0$). We saw in the proof that if $\ul\la=(\la^1,\la^2)$ with $\la^j=(1)$ for $j=1 $ or $j=2$ and both $\la^1,\la^2\neq \emptyset$, then the cases that arise when removing the unique box from $\la^j$ are \cite[Corollary 8.4, Cases (a) and (b)]{Griffeth}. Assume $\la^2=(1)$ is removed by some $\tilde{e}_i$, the other case $\la^1=(1)$ being similar. Recall that \cite[Corollary 8.4, Cases (a) and (b)]{Griffeth} require $\ct(b_1)\leq e-s$. It is impossible for any $(\la,\emptyset)$ belonging to these cases to arise as $(\tilde{f}_I((r^q),\emptyset))^t$ with $r-q=s-e$, except when $\la=(e-s+1)$ is a row of length at most $e$ with $0<s\leq e$. It follows that if both components $\la^1$ and $\la^2$ of $\ul\la$ are nonempty, $\ul\la\in\cU_{e,\ul{s}}$, and the depth of $\ul\la^t$ in the $\slinf$-crystal is $0$, then one of the following holds:
\begin{itemize}
\item $\ul\la^t=\tilde{f}_I(\emptyset,\emptyset)$ for an appropriate sequence $I$, 
\item $0<s\leq e$ and $\ul\la^t=\tilde{f}_I((e-s+1)^t,\emptyset)$ for an appropriate sequence $I$.
\item  $0\leq s<e$ and $\ul\la^t=\tilde{f}_I(\emptyset, (s+1)^t)$ for an appropriate sequence $I$.
\end{itemize}
The latter two situations do occur.
\begin{example}
Take $e=5$ and $\ul{s}=(0,3)$. Let $\ul\la=((3),\emptyset).$ Then $L(\ul\la)$ is unitary and finite-dimensional. Applying $\tilde{f}_3\tilde{f}_4\tilde{f}_0\tilde{f}_1$ to $\ul\la^t$ we get $\tilde{f}_I\ul\la^t=((2^3),(1))$ for $I=(3,4,0,1)$. Then $((2^3),(1))^t=((3^2),(1))\in\cU_{e,\ul{s}}$ by \cite[Corollary 8.5, Case (b)]{Griffeth}.
\end{example}

We have already looked at the case of nonzero depth in the $\slinf$-crystal in detail in Case (ii)(f) of the proof of Proposition \ref{crystal ops unitaries}. Let $r-q=s-e$. Let $\ul\la^t=\tilde{a}_{(m)}((r^q),\emptyset)$. Then $\ul\la$ will have its second component $\la^2$ non-empty if and only if $\mathrm{Per}^1$ of the abacus $|((r^q),\emptyset),\ul{s}\rangle$ contains beads from the top row of the abacus. This is the case if and only if $e>q$.

\begin{example}
Let $\ul\la=((4^{14}),(1^2))$. If $e=5$ and $\ul{s}=(0,13)$ then $\ul\la^t=\tilde{a}_{(2)}|((12^4),\emptyset),\ul{s}\rangle$. The cuspidal support of $L(\ul\la)\in\cO_{e,\ul{s}}$ is $(L((12^4),\emptyset)\otimes L(5)^{\otimes 2},B_{48}\times S_5^{\times 2})$. 
\end{example}

\subsection{Semisimplicity and restriction} We now return to establishing Theorem \ref{Main2}.
\begin{proposition}\label{weak res unitaries semisimple}
Let $L(\ul\la)\in\cO_{e,\ul{s}}(n)$ be a unitary irreducible representation. Then either $\Ores^n_{n-1}L(\ul\la)$ is semisimple or it is $0$.
\end{proposition}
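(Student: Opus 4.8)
The plan is to use the decomposition $\Ores^n_{n-1}\simeq\bigoplus_{i\in\mathbb{Z}/e\mathbb{Z}}\EE_i$ and reduce the statement to showing that each summand $\EE_iL(\ul\la)$ is irreducible or $0$; a direct sum of irreducible modules is semisimple, so this suffices. Recall from Shan's results that $\EE_iL(\ul\la)$ is either $0$ (exactly when $\tilde e_i(\ul\la^t)=0$) or indecomposable with simple head and simple socle, both equal to $L(\ul\kappa)$, where $\ul\kappa:=(\tilde e_i(\ul\la^t))^t$. As in the final step of the proof of Lemma \ref{easy i-res}, an indecomposable module whose head and socle are both the simple module $L(\ul\kappa)$ is irreducible if and only if $[\EE_iL(\ul\la):L(\ul\kappa)]=1$: if it were not irreducible its socle $L(\ul\kappa)$ would be a proper submodule contained in the radical, so the head would contribute a second copy of $L(\ul\kappa)$, forcing multiplicity at least $2$. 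The whole proposition thus reduces to the multiplicity-one statement $[\EE_iL(\ul\la):L(\ul\kappa)]=1$ for every $i$ with $\tilde e_i(\ul\la^t)\neq 0$.

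First I would dispose of the unitary bipartitions of nonzero $\slinf$-depth. By Corollary \ref{unitary supports} such a $\ul\la$ has $\ul\la^t=\tilde a_{(m)}(\text{source})$ with the source a totally $e$-periodic abacus; since shifting $e$-periods preserves total $e$-periodicity, $\cA(\ul\la)$ is itself totally $e$-periodic, so $\ul\la^t$ is a source vertex for the $\sle$-crystal and $\tilde e_i(\ul\la^t)=0$ for all $i$. Hence $\EE_iL(\ul\la)=0$ for all $i$ and $\Ores^n_{n-1}L(\ul\la)=0$. This leaves the bipartitions of $\slinf$-depth $0$, namely cases (1), (3), (5) of Corollary \ref{unitary supports}.

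For these I would bound the multiplicity using the standard filtration. Applying the exact functor $\EE_i$ to $\Delta(\ul\la)\twoheadrightarrow L(\ul\la)$ gives $\EE_i\Delta(\ul\la)\twoheadrightarrow \EE_iL(\ul\la)$, and $\EE_i\Delta(\ul\la)$ has a $\Delta$-filtration with subquotients $\Delta(\ul\la\setminus b)$ as $b$ ranges over the removable $i$-boxes of $|\ul\la^t,\ul{s}\rangle$. Since $[\Delta(\ul\mu):L(\ul\kappa)]\neq 0$ forces $c_{\ul\mu}\leq c_{\ul\kappa}$, only those $b$ with $c_{\ul\la\setminus b}\leq c_{\ul\kappa}$ can contribute a factor $L(\ul\kappa)$. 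A direct computation with the $c$-function formula shows that, within a single component, $c_{\ul\la\setminus b}$ is strictly monotonic in $\cct(b)$; as $b_{\mathrm{good}}$ (with $\ul\la\setminus b_{\mathrm{good}}=\ul\kappa$) is the smallest \emph{surviving} removable $i$-box, every other surviving removable $i$-box of its component has strictly larger content, hence strictly larger $c_{\ul\la\setminus b}>c_{\ul\kappa}$, and so cannot contribute $L(\ul\kappa)$.

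The remaining, and main, difficulty is to rule out contributions of $L(\ul\kappa)$ from removable $i$-boxes that were \emph{cancelled} in the crystal signature: boxes of smaller charged content cancelled from the left, or boxes lying in the component opposite to $b_{\mathrm{good}}$, for which $c_{\ul\la\setminus b}$ may drop below $c_{\ul\kappa}$. This is exactly where unitarity must enter, and where the argument becomes a case-by-case analysis mirroring the proof of Proposition \ref{crystal ops unitaries}. The key tool is Lemma \ref{min cancellation pair}: for a unitary $\ul\la$ every such dangerous removable box is cancelled by an addable box of the complementary component sitting immediately above it in charged content --- the ``docking'' phenomenon visible on the abaci in the proof of Proposition \ref{crystal ops unitaries} --- and the unitarity inequalities of \cite[Corollaries 8.4 and 8.5]{Griffeth} bound the content spread tightly enough that the associated $\Delta(\ul\la\setminus b)$ either does not contain $L(\ul\kappa)$ or has its contribution cancelled by $\EE_i$ of the radical of $\Delta(\ul\la)$. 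Running through cases (i)(a)--(e) and (ii)(a),(b),(f) as in Proposition \ref{crystal ops unitaries} should then give $[\EE_iL(\ul\la):L(\ul\kappa)]=1$, whence each $\EE_iL(\ul\la)$, and therefore $\Ores^n_{n-1}L(\ul\la)$, is semisimple or $0$. I expect this last verification to be the bulk of the work, since it is precisely the interaction between the cuspidal rectangle and the type-$A$ tail that must be controlled, generalizing the clean situation of Lemma \ref{easy i-res} where all removable boxes already have distinct residues.
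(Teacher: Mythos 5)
Your reduction to the multiplicity statement $[\EE_iL(\ul\la):L(\ul\kappa)]=1$ is sound, and your disposal of the unitary bipartitions of nonzero $\slinf$-depth (via Corollary \ref{unitary supports}, the commutation of the two crystals, and the fact that $\EE_iL(\ul\la)=0$ exactly when $\tilde e_i(\ul\la^t)=0$) is correct, though ``shifting $e$-periods preserves total $e$-periodicity'' should be justified by citing Jacon--Lecouvey together with the crystal commutation rather than asserted. The genuine gap is in the depth-zero cases: your monotonicity claim runs in the wrong direction, and this inverts which boxes are problematic. Writing $\cct(b^t)=s_j-\ct(b)$ for a removable box $b$ of $\la^j$, the $c$-function formula gives $c_{\ul\la\setminus b}=c_{\ul\la}+\epsilon_j-\tfrac{2}{e}\cct(b^t)$ with $\epsilon_1=\tfrac{s}{e}-1$, $\epsilon_2=\tfrac{s}{e}$, so within a component $c_{\ul\la\setminus b}$ is strictly \emph{decreasing} in the charged content of $b^t$, not increasing. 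Since the good box is the \emph{smallest} surviving removable $i$-box in charged content, every other surviving removable $i$-box $b^t$ of its component satisfies $c_{\ul\la\setminus b}<c_{\ul\kappa}$; these are precisely the boxes that the lowest-weight ordering (namely $[\Delta(\ul\mu):L(\ul\kappa)]>0\Rightarrow c_{\ul\mu}<c_{\ul\kappa}$) fails to exclude. Conversely, the cancelled removable $i$-boxes of smaller charged content, which you single out as the main difficulty where ``unitarity must enter,'' give $c_{\ul\la\setminus b}>c_{\ul\kappa}$ and are harmless for free. So the worked-out portion of your argument establishes nothing, and the remaining difficulty you describe is aimed at the wrong set of boxes.

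The second problem is that the case analysis you defer (``running through cases (i)(a)--(e) and (ii)(a),(b),(f) \ldots should then give \ldots'') is the entire content of the proposition, and the mechanism that actually makes it work is one your sketch never identifies. The paper uses the unitarity inequalities of \cite[Corollaries 8.4 and 8.5]{Griffeth} to show that, in every case except Griffeth's case (d) for one nonempty component (and its mirror), the contents of the removable boxes of $\ul\la$ lie in an interval of at most $e$ consecutive integers; hence distinct removable boxes have distinct residues mod $e$ and Lemma \ref{easy i-res} applies immediately, with no signature analysis and no multiplicity computation beyond that lemma. The one exceptional family, shapes $\la=(q^r,1^j)$ and their relatives in case (i)(d) where exactly two removable boxes share a residue, is settled by identifying the good box via Lemma \ref{min cancellation pair} and a two-term $\Delta$-filtration count: there the cancelled box $b_2^t$ has \emph{smaller} charged content than the good box $b_5^t$, hence $c_{\ul\la\setminus b_2}>c_{\ul\la\setminus b_5}$, which kills the unwanted multiplicity, while rectangles are handled by Theorem \ref{unitary+fd} and \cite{BE}. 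Until you carry out an analysis of this kind, your proposal is a plan rather than a proof, and its guiding heuristic points in the wrong direction.
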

\begin{proof}
As in Proposition \ref{crystal ops unitaries}, we break the proof into two main cases: (i) when $\ul\la=(\la,\emptyset)$ for some $\la\in\cP(n)$, and (ii) when $\ul\la=(\la^1,\la^2)$ with both $\la^1,\la^2\neq \emptyset$. The case that $\ul\la=(\emptyset,\la)$ follows from that for $\ul\la$ as in \cite[Corollary 8.4]{Griffeth}. In Case (i) we check the cases from \cite[Corollary 8.4]{Griffeth} for unitarity of $L(\la,\emptyset)$. In Case (ii) we check the cases from \cite[Corollary 8.5]{Griffeth} for unitarity of $L(\lambda^1,\lambda^2)$.\\


\noindent \ul{\em Case (i): $\ul\la=(\la,\emptyset)$ for some partition $\la$ of $n$.} \\
If $\la=(1^n)$ then $\la$ has only one removable box, so Lemma \ref{easy i-res} says that $\Ores^n_{n-1}L(\ul\la)$ is irreducible or $0$. From now on, assume $\la\neq (1^n)$. 

\ul{\em Cases (i)(a),(b),(c),(e)}. Since $\ct(b_2)\leq \ct(b_1)$, in Cases (a), (b), and (e) it holds that $\ct(b_2)-\ct(b_5)+1\leq e$. In Case (c) we have $\ct(b_5)\geq -s$ and $\ct(b_2)<e-s$, yielding $\ct(b_2)-\ct(b_5)+1\leq e$ as well in this case. The contents of removable boxes in $\la$ belong to the interval of integers $[-\ct(b_5),\ct(b_2)]$ which contains $\ct(b_2)-\ct(b_5)+1$ distinct integers.  
Thus no $i\in\mathbb{Z}/e\mathbb{Z}$ appears more than once as $\ct(b)\mod e$ for a removable box $b\in\la$. By Lemma \ref{easy i-res} then $\Ores^n_{n-1}L(\ul\la)$ is semisimple or $0$. 

\ul{\em Case(i)(d)}. In case (d) it is required that $\ct(b_2)=e-s$. If $\la$ is a rectangle or $b_4=b_5$ then this is the only condition. We deal with these subcases first. If $\la$ is a rectangle then it labels a finite-dimensional representation by Theorem \ref{unitary+fd}, hence $\Ores^n_{n-1}L(\ul\la)=0$ by \cite{BE}. If $b_4=b_5$ and $\ct(b_2)\neq \ct(b_5)\mod e$ then we may apply Lemma \ref{easy i-res} to conclude that $\Ores^n_{n-1}L(\ul\la)$ is semisimple or $0$ (in fact, it is either irreducible or $0$ because $b_2^t$ is not good removable in $|\ul\la^t,\ul{s}\rangle$, as we see next). 

Now assume $b_4=b_5$, $\ct(b_2)= \ct(b_5)\mod e$, and $b_2\neq b_4$, that is, $\la$ has exactly two removable boxes and the box of smallest content in $\la$ is one of them. Since $\ct(b_2)=e-s$, in $\la^t$ the box $b_2^t$ has content $s-e$, so by Lemma \ref{min cancellation pair} it is not a good removable $s$-box because it cancels with the addable $s$-box in component $2$ of $|\ul\la^t,\ul{s}\rangle$. The box $b_5^t$ is also an $s$-box of $|\ul\la^t,\ul{s}\rangle$ by assumption. There is no addable $s$-box of larger charged content. Therefore $b_5^t$ is the good removable $s$-box of $|\ul\la^t,\ul{s}\rangle$, and $\tilde{e}_s(\ul\la^t)=\ul\la^t\setminus b_5^t$. It remains to show that $\Ores^n_{n-1}L(\ul\la)$ is irreducible, since a priori it could be an extension. We have $\Ores^n_{n-1}L(\ul\la)\twoheadrightarrow  L((\tilde{e}_s(\ul\la^t))^t)=L(\ul\la\setminus b_5)$. By exactness of $\Ores^n_{n-1}$, then $\Ores^n_{n-1}\Delta(\ul\la)\twoheadrightarrow L(\ul\la\setminus b_5)$. Since $\la$ has no removable boxes besides $b_2$ and $b_5$, $[\Ores^n_{n-1}\Delta(\ul\la)]=[\Delta(\ul\la\setminus b_5)]+[\Delta(\ul\la\setminus b_2)]$ \cite{BE}. Observe that $c_{\ul\la\setminus b_5}>c_{\ul\la\setminus b_2}$. Therefore $[\Delta(\ul\la\setminus b_2):L(\ul\la\setminus b_5)]=0$, so $[\Ores^n_{n-1}\Delta(\ul\la):L(\ul\la\setminus b_5)]=[\Delta(\ul\la\setminus b_5):L(\ul\la\setminus b_5)]=1$. 
We deduce that $[\Ores^n_{n-1}L(\ul\la):L(\ul\la\setminus b_5)]=1$. Since the socle of $\Ores^n_{n-1}L(\ul\la)$ is equal to the head of $\Ores^n_{n-1}L(\ul\la)$, we conclude that $\Ores^n_{n-1}L(\ul\la)=L(\ul\la\setminus b_5)$.

Next, we consider the case $b_4\neq b_5$. Then in addition to $\ct(b_2)=e-s$ it is required that $\ct(b_3)-\ct(b_5)+1\leq e$ or $e=\ell-\ct(b_5)+1$ for some $b_4\leq \ell <b_3$. These two alternative conditions are equivalent to a single condition: $\ct(b_4)-\ct(b_5)+1\leq e$. Let $R$ be the set of removable boxes of $\la$. It follows that for all $b\neq b'\in R\setminus b_2$, $\ct(b)\neq \ct(b')\mod e$. Then we argue as in Lemma \ref{easy i-res} that $\EE_iL(\ul\la)$ is irreducible or $0$ for all $i\neq s\mod e$.  If some box $b\in R$ has the same content mod $e$ as $b_2$, then we argue as in the previous paragraph that $\EE_sL(\ul\la)=L(\ul\la\setminus b)$.\\


\noindent \ul{ \em Case(ii): $\ul\la=(\la^1,\la^2)$ with both $\la^1,\la^2\neq \emptyset$.}\\
As in  Proposition \ref{crystal ops unitaries}, it is sufficient to check the conditions for unitarity in cases (a), (b), and (f) of \cite[Corollary 8.5]{Griffeth}. For any $\ul\la\in\cP^2$, it holds that $\ct(b_4)\leq \ct(b_2)\leq \ct(b_1)$ and $\ct(b_4')\leq \ct(b_2')\leq \ct(b_1')$. In all three cases (a), (b), and (f) we then have $\ct(b_2)-\ct(b_4')+1\leq e-s$ and $\ct(b_2')-\ct(b_4)+1\leq s$. It follows that $-\ct(b_2)\leq -\ct(b_4)<-\ct(b_2')+s\leq -\ct(b_4')+s<-\ct(b_2)+e$. Therefore each $i\in\mathbb{Z}/e\mathbb{Z}$ occurs at most once as $\cct(b)\mod e$ for a removable box $b$ of $|\ul\la^t,\ul{s}\rangle$. By Lemma \ref{easy i-res}, $\Ores^n_{n-1}L(\ul\la)$ is semisimple or $0$.
\end{proof}


\textbf{Acknowledgments.} Thanks to C. Bowman, P. Etingof, T. Gerber, S. Griffeth, and J. Simental for useful discussions. I would like to especially thank J. Simental for bringing to my attention S. Montarani's work in \cite{Montarani}. Thank you to the anonymous referee for helping to get the baby version of this paper in order. Part of this work was carried out under the auspices of the grant SFB-TRR 195.

\end{document}